\numberwithin{equation}{section}
\newtheorem{theorem}{Theorem}[section]
\newtheorem{proposition}[theorem]{Proposition}
\newtheorem{lemma}[theorem]{Lemma}
\newtheorem{corollary}[theorem]{Corollary}
\newtheorem{definition}[theorem]{Definition}
\newtheorem{remark}[theorem]{Remark}
\newtheorem{example}[theorem]{Example}
\newcommand{\R}{\mathbb R}
\newcommand{\bt}{\begin{theorem}}
\newcommand{\et}{\end{theorem}}
\newcommand{\bl}{\begin{lemma}}
\newcommand{\el}{\end{lemma}}
\newcommand{\bd}{\begin{definition}}
\newcommand{\ed}{\end{definition}}
\newcommand{\bc}{\begin{corollary}}
\newcommand{\ec}{\end{corollary}}
\newcommand{\bp}{\begin{proof}}
\newcommand{\ep}{\end{proof}}
\newcommand{\bx}{\begin{example}}
\newcommand{\ex}{\end{example}}
\newcommand{\bi}{\begin{exercise}}
\newcommand{\ei}{\end{exercise}}
\newcommand{\bo}{\begin{proposition}}
\newcommand{\eo}{\end{proposition}}
\newcommand{\br}{\begin{remark}}
\newcommand{\er}{\end{remark}}
\newcommand{\be}{\begin{equation}}
\newcommand{\ee}{\end{equation}}
\newcommand{\ba}{\begin{align}}
\newcommand{\ea}{\end{align}}
\newcommand{\bn}{\begin{enumerate}}
\newcommand{\en}{\end{enumerate}}
\newcommand{\bg}{\begin{align*}}
\newcommand{\bcs}{\begin{cases}}
\newcommand{\ecs}{\end{cases}}
\newcommand{\bean}{\begin{eqnarray*}}
\newcommand{\eean}{\end{eqnarray*}}
\def \N{{\mathbb N}}
\def \O{\Omega}
\renewcommand\theequation{\thesection.\arabic{equation}}
\numberwithin{equation}{section}
\begin{document}

\begin{center}
\textbf{Variational method for fractional Hamiltonian system in bounded domain}\\
\end{center}

\begin{center}
Weimin Zhang\\
School of Mathematical Sciences,  Key Laboratory of Mathematics and Engineering Applications (Ministry of Education) \& Shanghai Key Laboratory of PMMP,  East China Normal University, Shanghai 200241, China
\end{center}
\begin{center}
\renewcommand{\theequation}{\arabic{section}.\arabic{equation}}
\numberwithin{equation}{section}
\footnote[0]{\hspace*{-7.4mm}
AMS Subject Classification: 35A15, 35J60, 58E05.\\
{E-mail addresses: zhangweimin2021@gmail.com (W. Zhang).}}
\end{center}

\begin{abstract}
Here we consider the following fractional Hamiltonian system
\begin{equation*}
\begin{cases}
\begin{aligned}
(-\Delta)^{s} u&=H_v(u,v) \;\;&&\text{in}~\Omega,\\
(-\Delta)^{s} v&=H_u(u,v) &&\text{in}~\Omega,\\
u &= v = 0 &&\text{in} ~ \mathbb{R}^N\setminus\Omega,
\end{aligned}
\end{cases}
\end{equation*}
where $s\in (0,1)$, $N>2s$, $H \in C^1(\mathbb{R}^2, \mathbb{R})$ and $\Omega \subset \mathbb{R}^N$ is a smooth bounded domain. 

To apply the variational method for this problem, the key question is to find a suitable functional setting. Instead of usual fractional Sobolev spaces, we use the solutions space of $(-\Delta)^{s}u=f\in L^r(\Omega)$ for $r\ge 1$, for which we show the (compact) embedding properties. When $H$ has subcritical and superlinear growth, we construct two frameworks, respectively with interpolation space method and dual method, to show the existence of nontrivial solution. 
As byproduct, we revisit the fractional Lane-Emden system, i.e. $H(u, v)=\frac{1}{p+1}|u|^{p+1}+\frac{1}{q+1}|v|^{q+1}$, and consider the existence, uniqueness of (radial) positive solutions under subcritical assumption.
\end{abstract}
\textbf{Keywords:} Hamiltonian system, fractional Laplacian, variational method

\section{Introduction and main results}
In present paper, we are interested in the following fractional system of Hamiltonian type:
\begin{equation}\label{main}
\begin{cases}
\begin{aligned}
(-\Delta)^{s} u&=H_v(u,v) \;\;&&\text{in}~\Omega,\\
(-\Delta)^{s} v&=H_u(u,v) &&\text{in}~\Omega,\\
u &= v = 0 &&\text{in} ~ \mathbb{R}^N\setminus\Omega,
\end{aligned}
\end{cases}
\end{equation}
where $s\in (0,1)$, $N>2s$, $H \in C^1(\mathbb{R}^2, \mathbb{R})$ and $\Omega \subset \mathbb{R}^N$ is a bounded domain. We will not focus on the regularity condition of the domain $\Omega$, and say simply that $\Omega$ is smooth, even most results work with $C^{1, 1}$ boundary.
The study of system \eqref{main} is mainly motivated by the following classical Hamiltonian system:
\begin{equation}\label{2211101840}
\begin{cases}
\begin{aligned}
-\Delta u&=H_v(u,v) \,&&\text{in}~\Omega,\\
-\Delta v&=H_u(u,v) &&\text{in}~\Omega,\\
u &= v = 0 &&\text{on} ~ \partial\Omega.\\
\end{aligned}
\end{cases}
\end{equation}
Formally the energy functional of \eqref{2211101840} is
\begin{equation}\label{2402041528}
\mathcal{L}(u, v)=\int_{\Omega}\nabla u \nabla v dx-\int_{\Omega}H(u, v) dx.
\end{equation}
A crucial question is to decide on which space we will consider the functional $\mathcal{L}$. Let us look at the famous Lane-Emden system where $H(u,v) = \frac{|u|^{p+1}}{p+1} + \frac{|v|^{q+1}}{q+1}$, $p, q > 0$, i.e.
\begin{equation}\label{2302241613}
\begin{cases}
\begin{aligned}
-\Delta u& =|v|^{q-1}v \,&&\text{in}~\Omega,\\
-\Delta v& =|u|^{p-1}u \,&&\text{in}~\Omega,\\
u &= v = 0~~&&\text{on} ~\partial\Omega.\\
\end{aligned}
\end{cases}
\end{equation}
It is well known that the existence of positive solutions to \eqref{2302241613} on a ball is decided by the position of $(p, q)$ with respect to the critical hyperbola
\begin{equation}\label{2212031636}
p, q > 0, \quad \frac{1}{p+1}+\frac{1}{q+1}=\frac{N-2}{N}.
\end{equation}
A naive choice of the functional space for $\mathcal{L}$ is $H_0^1(\Omega)^2$. However, the Sobolev embedding greatly restricts the growth of $H$, and we could only work with $\max(p, q) \leq \frac{N+2}{N-2}$, hence many other choices of $H$ were eliminated. Another difficulty is to deal with the strong indefiniteness of the quadratic part in $\mathcal{L}$, which is neither bounded from below nor from above on any subspace of $H_0^1(\Omega)^2$ with finite codimension.

To include more choices of $H$ as for \eqref{2302241613}, people thought about functional spaces without symmetry in $u$ and $v$, such as $W_0^{1,t}(\Omega)\times W_0^{1,\frac{t}{t-1}}(\Omega)$ with $t>1$. But a new difficulty occurs since this choice is not a Hilbert space when $t\neq 2$, which prevents us from applying the linking theory due to Benci and Rabinowitz \cite{BR1979}.

As far as we are aware, Hulshof and van der Vorst \cite{Hv1993} first used the interpolation space framework to deal with the system with $H(u, v) = F(u)+G(v)$, that is
\begin{equation}\label{2402041616}
\begin{cases}
\begin{aligned}
-\Delta u & =g(v) \;\;&&\text{in}~\Omega,\\
-\Delta v& =f(u) &&\text{in}~\Omega,\\
u&= v=0 &&\text{on} ~ \partial\Omega.
\end{aligned}
\end{cases}
\end{equation}
They replaced the first integral in \eqref{2402041528} by
\[
\int_{\Omega}\widetilde{A}^{t}u\widetilde{A}^{2-t}v dx \quad \text{with}\;\; t\in (0, 2),
\]
where $\widetilde{A}^t: \widetilde{H}^{t}(\Omega)\to L^2(\Omega)$ is a family of interpolation operators and $\widetilde{H}^{t}(\Omega)$ is a family of interpolation spaces between $L^2(\Omega)$ and $H^2(\Omega)\cap H_0^1(\Omega)$. More precisely, for any
\begin{equation}\label{2211201519}
p, q > 0, \quad \frac{1}{p+1}+\frac{1}{q+1}>\frac{N-2}{N}, \end{equation}
there exist some $t\in (0, 2)$ such that the embedding
$$\widetilde{H}^t(\Omega)\times \widetilde{H}^{2-t}(\Omega)\subset L^{p+1}(\Omega)\times L^{q+1}(\Omega)$$
 is compact, which yields the existence of nontrivial solution to \eqref{2402041616} under following conditions
\begin{itemize}
\item (Subcritical) $f(u)=O(|u|^p), g(v)=O(|v|^q)$ as $|u|, |v|\to \infty$ with $p, q>1$ satisfying \eqref{2211201519};
\item (Superlinear) $f(u)=o(|u|)$, $g(v)=o(|v|)$ as $|u|, |v|\to 0$;
\item ($AR$ condition) $f, g\in C(\mathbb{R})$, $f(0)=g(0)=0$, and there exist $\gamma>2$, $R >0$ such that
\[
0<\gamma F(u)\le uf(u),\quad 0<\gamma G(v)\le vg(v),\quad \forall\; |u|, |v|\ge R.
\]
where $F(t):=\displaystyle \int_0^{t}f(\tau) d\tau\ge 0$, $G(t):=\displaystyle \int_0^{t}g(\tau) d\tau\ge 0$.
\end{itemize}
This generalized clearly the study of Lane-Emden system  \eqref{2302241613}.

Later, de Figueiredo and Felmer also applied  in \cite{dF1994} the interpolation space method to handle \eqref{2211101840}, with more general coupled $H$ where $p, q$ satisfy \eqref{2211201519},
\[
0 \le H(u, v)\le C(|u|^{p+1}+|v|^{q+1}),
\]
and $H$ is superlinear at 0, that is
\[
C(|u|^{p+1}+|v|^{q+1}) \le H(u, v) \;\;\text{with}\;\; pq>1, |u|+|v|< r \mbox{ for some } r > 0.
\]
In addition, Ambrosetti-Rabinowitz ($(AR)$ for short) type condition (as $(H2)$ below) was assumed. They proved then \eqref{2211101840} admits a nontrivial solution if
\[
(N-4)\max\{p, q\}<N+4.
\]
Remark that to work with more general coupled $H$, we need to restrict the upper bound of $p, q$.

Cl\'ement and van der Vorst in \cite{CV1995} proposed another approach to study  \eqref{2211101840}, their idea is to took advantage of dual method developed by Clarke, Ekeland and Temam \cite{CE_CPAM1980, ET1999}. Here the nonlinearity $H$ is assumed strictly convex, subcritical (see \eqref{2211201519}) and superlinear at 0. This dual method will transform \eqref{2211101840} into a problem where the energy functional possesses a mountain pass structure. In fact, consider
the Legendre-Fenchel transform of $H$ (see \cite{ET1999}, \cite[Chapter I, Section 6]{Struwe2008}):
\[
 H^*(x)=\underset{w\in \mathbb{R}^2}{\sup}\big\{\langle w, x\rangle -H(x)\big\}\quad \mbox{ for any } x \in \mathbb{R}^2,
\]
one can obtain solutions to \eqref{2211101840} by critical points of
\begin{equation}\label{2211011351}
\mathcal{J}(u, v)=\int_{\Omega}{H}^*(u, v) dx-\int_{\Omega}v\mathcal{A}u dx,\quad \forall \, (u, v)\in L^{1+\frac1p}(\Omega)\times L^{1+\frac1q}(\Omega)
\end{equation}
where $\mathcal{A}$ is the inverse of
\begin{align}
\label{isop}
-\Delta : W^{2, 1+\frac1p}(\Omega)\cap W_0^{1, 1+\frac1p}(\Omega)\to L^{1+\frac1p}(\Omega).
\end{align}
If moreover $H$ satisfies $(AR)$ type condition (similar to $(H5)$ and $(H7)$ below), the existence of nontrivial solution to \eqref{2211101840} was shown in \cite{CV1995}. This dual method was also used to handle system with critical growth, see Hulshof, Mitidieri and van der Vorst \cite{HMv1998}.

Some other approaches exist. de Figueiredo, do \'{O} and Ruf \cite{ddR_JFA2005} used Orlicz-space to obtain nontrivial solutions of \eqref{2402041616}, they replaced $W_0^{1,t}(\Omega)\times W_0^{1,\frac{t}{t-1}}(\Omega)$ by Sobolev-Orlicz space $W_0^1L_A(\Omega)\times W_0^1L_{\widetilde A}(\Omega)$, where $A$ is a $N$-function and ${\widetilde A}$ is its Young-conjugate. Owing to the fact that this setting is not a Hilbert space, they used finite-dimensional approximation method. Their models contain also nonlinearites with nearly critical growth.
The Lyapunov-Schmidt reduction approach was also applied to problem \eqref{2402041616}, see for instance \cite{RTZ2009, RT2008}. For more literature in this topic, we refer to \cite{BdT2014} and references therein.

Coming back to the special case \eqref{2302241613}. In \cite{CdM2015}, the existence of positive solution to \eqref{2302241613} was firstly considered based on topological method. When $p, q>0$ and $pq<1$,  the uniqueness of positive solution was investigated in \cite{Dalmasso2000}. In \cite{BMR2012}, \eqref{2302241613} was reduced to the following single equation
\begin{equation}\label{2302241754}
\begin{cases}
\begin{aligned}
\Delta \big(|\Delta u|^{\frac1p -1}\Delta u\big) & =|u|^{q-1}u~~&& \text{in}~\Omega,\\
u & = \Delta u=0 &&\text{on}~\partial\Omega.
\end{aligned}
\end{cases}
\end{equation}
If $p, q>0$, $pq\neq 1$ and subcritical as in \eqref{2211201519}, then \eqref{2302241613} admits a positive classical ground state solution (it has minimal energy among all solutions), see \cite{BMR2012}. If $\Omega$ is a ball, we can use the Schwartz rearrangement to show that the ground state solution is radially symmetric. Furthermore, \eqref{2302241613} has no positive solutions, if $\Omega$ is star-shaped and $p, q > 0$ do not satisfy \eqref{2211201519}, see \cite{Mitidieri1993} .

\medskip
To our best knowledge, for the fractional Laplacian case, although some special cases such as fractional Lane-Emden systems were studied (see \cite{LM2019} and references therein), there exists no study of \eqref{main} for general coupled subcritical nonlinearities so far. As mentioned above, a key step to handle \eqref{main} with the variational approach is to establish a suitable functional framework. Furthermore, in the Laplacian case, regardless of interpolation method, dual method, or reduced into a single equation, one needs the isomorphism given by \eqref{isop}. In the fractional Laplacian case, we need to find suitable functional space which plays the role of $W^{2, 1+\frac1p}(\Omega)\cap W_0^{1, 1+\frac1p}(\Omega)$.

\subsection{Weak solution and fractional spaces}
Let $\Omega$ be a smooth bounded domain and $s \in (0, 1)$, we denote
\[
X_0^s(\Omega):=\left\{u\in L^2(\mathbb{R}^N):u=0~ \mbox{ a.e.~in}~\mathbb{R}^N \setminus \Omega \;\;\text{and}\; \;\int_{\mathbb{R}^{2N}}{\frac{|u(x)-u(y)|^2}{|x-y|^{N+2s}}}dxdy<\infty\right\},
\]
endowed with the norm
\[
\|u\|_{X_0^s(\Omega)}:=\left( \int_{\mathbb{R}^{2N}}{\frac{|u(x)-u(y)|^2}{|x-y|^{N+2s}}}dxdy\right)^{1/2}.
\]
The embedding $X_0^s(\Omega)\hookrightarrow L^r(\Omega)$ is continuous for $r\in [1,\frac{2N}{N-2s}]$ and compact for $r\in [1,\frac{2N}{N-2s})$, see \cite[Theorems 6.5, 7.1]{Di12}.
$(-\Delta)^s$ is an operator from $X_0^s(\Omega)$ into its dual space, namely
\[
\langle (-\Delta)^s u, v \rangle=\int_{\mathbb{R}^{2N}}{\frac{(u(x)-u(y))(v(x)-v(y))}{|x-y|^{N+2s}}}dxdy,\quad \forall\, u, v\in X_0^s(\Omega).
\]
Formally the energy functional associated to \eqref{main} is
\begin{equation}\label{2401111951}
\mathcal{K}(u, v)=\int_{\mathbb{R}^{2N}}{\frac{(u(x)-u(y))(v(x)-v(y))}{|x-y|^{N+2s}}}dxdy-\int_{\Omega}H(u, v) dx.
\end{equation}
As mentioned before, the crucial question is to find suitable functional space to work with $\mathcal{K}$, and $X_0^s(\Omega)^2$ would not be the right one if we hope to handle more general functional $H$.

Next we recall the regularity result for the linear equation
\begin{equation}\label{2306011244}
(-\Delta)^{s} u\;=f \;\;\text{in}~\Omega,\quad
u =0~\;\;\text{in} ~ \mathbb{R}^N\setminus\Omega.
\end{equation}
We denote
\[
\delta(x):=\text{dist}(x, \partial\Omega),\quad x\in\Omega,
\]
and
\[
C_{\delta}^{\alpha}(\overline{\Omega}):= \Big\{u\in C(\overline\Omega): \frac{u}{\delta^s} \mbox{ admits a continuous extension belonging to } C^{\alpha}(\overline{\Omega})\Big\}.
\]
Ros-Oton and Serra in \cite[Proposition 1.1, Theorem 1.2]{RS2014} proved that
\begin{lemma}\label{2402061816}
Let $\Omega$ be a bounded $C^{1,1}$ domain and $f\in L^{\infty}(\Omega)$. There exists a unique $u\in X_0^s(\Omega)$ solving \eqref{2306011244}.  Moreover $u\in C^s(\overline{\Omega})\cap C_{\delta}^{\alpha}(\overline{\Omega})$, and
\[
\|u\|_{C^s(\overline{\Omega})}+ \left\|\frac{u}{\delta^s}\right\|_{C^\alpha(\overline{\Omega})}\le C\|f\|_{\infty}
\]
for some $0<\alpha<\min\{s, 1-s\}$. The constants $\alpha$, $C$ depend only on $\Omega$ and $s$.
\end{lemma}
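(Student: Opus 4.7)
The plan is to separate the three assertions: existence-uniqueness in $X_0^s(\Omega)$, the global Hölder estimate, and the boundary regularity of $u/\delta^s$. For existence and uniqueness, the natural route is Lax-Milgram. The bilinear form $B(u,v)=\int_{\mathbb{R}^{2N}}\frac{(u(x)-u(y))(v(x)-v(y))}{|x-y|^{N+2s}}dxdy$ is symmetric, continuous, and coercive on $X_0^s(\Omega)$ (coercivity follows from the fractional Poincaré inequality, which is an easy consequence of the compact embedding $X_0^s(\Omega)\hookrightarrow L^2(\Omega)$ already recalled in the excerpt). Since $\Omega$ is bounded we have $f\in L^{\infty}(\Omega)\subset L^2(\Omega)$, so the map $v\mapsto\int_\Omega f v\,dx$ is a bounded linear functional on $X_0^s(\Omega)$, and Lax-Milgram furnishes the unique weak solution.

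For the $C^s(\overline\Omega)$ estimate I would use an explicit barrier combined with interior regularity. Let $w$ be the torsion function solving $(-\Delta)^s w=1$ in $\Omega$, $w=0$ in $\mathbb{R}^N\setminus\Omega$; in a ball $w$ is known explicitly and, by the uniform interior/exterior ball property inherited from the $C^{1,1}$ regularity of $\partial\Omega$, one gets the two-sided bound $c_1\delta^s\le w\le c_2\delta^s$. The weak comparison principle then yields $|u|\le\|f\|_\infty w\le c_2\|f\|_\infty\delta^s$, so $u$ vanishes at the order $\delta^s$ near $\partial\Omega$. Coupling this with the interior estimates of Silvestre for $(-\Delta)^s u=f\in L^\infty$, which give a $C^{2s-\varepsilon}$ modulus inside $\Omega$, and rescaling these interior estimates at the natural scale $\delta(x)$, a standard covering/interpolation argument upgrades everything to $[u]_{C^s(\overline\Omega)}\le C\|f\|_\infty$.

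The heart of the lemma, and the hardest step, is the bound on $u/\delta^s$ in $C^\alpha(\overline\Omega)$. I would argue by contradiction with a blow-up/compactness scheme. Suppose the estimate fails along a sequence $(u_k,f_k)$, pick boundary points $x_k\in\overline\Omega$ realizing the maximal Hölder quotient of $u_k/\delta^s$ at a scale $r_k\downarrow 0$, and use a $C^{1,1}$ flattening of $\partial\Omega$ near $x_k$ so that the rescaled domains converge to the half-space $\{x_N>0\}$. Normalize the rescaled functions $\tilde u_k$ to have unit Hölder seminorm of $\tilde u_k/\delta^s$ at the base point. The previous step gives uniform $C^s$ bounds on $\tilde u_k$, and with careful control of the nonlocal tails one extracts a locally uniform limit $\tilde u_\infty$ solving $(-\Delta)^s\tilde u_\infty=0$ in the half-space with vanishing exterior data and growth at most $|x|^{s+\alpha}$. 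A Liouville-type classification then forces $\tilde u_\infty$ to be a multiple of $(x_N)_+^s$, whose ratio with $\delta^s$ is constant, contradicting the normalization.

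The main obstacle will be the blow-up step: one must control the nonlocal tails uniformly under rescaling (so that the limiting equation is really $(-\Delta)^s\tilde u_\infty=0$ and not corrupted by far-field contributions), flatten the boundary without spoiling either the exterior condition or the $\delta^s$ weight, and prove the half-space Liouville theorem in the admissible growth class. The bound $\alpha<\min\{s,1-s\}$ appears precisely here: $\alpha<s$ is needed so that $s+\alpha<2s$ keeps the limit below the threshold where nontrivial polynomial-type solutions of the half-space problem appear, while $\alpha<1-s$ is what the $C^{1,1}$ flattening can absorb without generating uncontrolled error terms. Once the contradiction is closed, a standard covering argument distributes the boundary estimate over all of $\overline\Omega$ and yields the claimed inequality.
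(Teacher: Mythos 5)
The paper does not prove this lemma at all: it is imported verbatim as a known result, with the explicit citation to Ros-Oton and Serra \cite[Proposition 1.1, Theorem 1.2]{RS2014}, and the author never intends to reprove it. So the only meaningful comparison is between your sketch and the proof in \cite{RS2014}. Your first two steps match that source: existence and uniqueness by Lax--Milgram on $X_0^s(\Omega)$ with the fractional Poincar\'e inequality, and the global $C^s(\overline{\Omega})$ bound by combining the barrier $|u|\le \|f\|_\infty w\le C\|f\|_\infty\delta^s$ (torsion function plus interior/exterior ball conditions) with interior estimates rescaled at the scale $\delta(x)$. For the hard part --- the $C^\alpha(\overline\Omega)$ bound on $u/\delta^s$ --- your route (blow-up, boundary flattening, compactness, half-space Liouville classification forcing the limit to be a multiple of $(x_N)_+^s$) is a legitimate and by-now standard strategy, but it is the method of the \emph{later} Ros-Oton--Serra works on fully nonlinear integro-differential equations; the original proof of Theorem 1.2 in \cite{RS2014} is instead a Krylov-type iteration, showing directly that the oscillation of $u/\delta^s$ decays geometrically on dyadic balls centered at boundary points by comparing $u$ with $q\,\delta^s$ and using explicit super/subsolution barriers. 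The iteration route is more elementary (no compactness, no Liouville theorem needed) at the price of more explicit barrier computations; the blow-up route is more flexible but requires you to actually prove the half-space Liouville theorem in the growth class $O(|x|^{s+\alpha})$ with $\alpha<s$ and to control the nonlocal tails uniformly along the rescaling --- steps you correctly identify as the main obstacles but do not carry out. As submitted, your text is therefore a sound plan rather than a proof; in the context of this paper the honest move is exactly what the author does, namely to cite \cite{RS2014}.
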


Here we shall consider weaker solution (see definition below) and choose the test function space as
\[
\mathcal{T}_s(\Omega):=\{u\in X_0^s(\Omega): (-\Delta)^s u\in  C_c^{\infty}(\Omega)\}.
\]
Hence $\mathcal{T}_s(\Omega) \subset C^s(\overline{\Omega})\cap C_{\delta}^{\alpha}(\overline{\Omega})$. Let
\[
L^1(\Omega; \delta^s dx) :=\Big\{u\in L^1_{loc}(\Omega): \int_{\Omega} |u|\delta^sdx<\infty\Big\}
\]
be endowed with the norm $\|u\|_{L^1(\Omega; \delta^s dx)}= |u \delta^s|_1$. In this paper, $|\cdot|_r$ denotes always the norm of $L^r(\Omega)$.
\begin{definition}\label{2211161306}
Let $s\in (0, 1)$, $N>2s$ and $f\in L^1(\Omega; \delta^s dx)$. We say that $u$ is a $L^1$-weak solution to \eqref{2306011244} if $u\in L^1(\Omega)$, and
\begin{align}
\label{L1weak}
\int_{\Omega}u(-\Delta)^s\varphi dx=\int_{\Omega}f \varphi dx,\quad  \forall \, \varphi\in \mathcal{T}_s(\Omega).
\end{align}
\end{definition}
\noindent
Note that similar notions were given in \cite{RS2014-1, LPPS2015}. We will prove in Proposition \ref{2304232341} that for any $f\in L^1(\Omega; \delta^s dx)$, there exists a unique $L^1$-weak solution to \eqref{2306011244}. We give also the comparison principle and maximum principle for fractional Laplacian in $L^1$-weak sense, see Lemmas \ref{2306081933} and \ref{2306012122}. Accordingly, we define
\begin{definition}
For $s\in (0, 1)$ and $N>2s$, $(u, v)$ is said a $L^1$-weak solution to system \eqref{main} if $u, v \in L^1(\Omega)$, $H_u(u, v), H_v(u, v)\in L^1(\Omega; \delta^sdx)$ and for any $(\varphi, \psi)\in \mathcal{T}_s(\Omega)\times \mathcal{T}_s(\Omega)$,
\[
\int_{\Omega} u(-\Delta)^s\varphi dx=\int_{\Omega} H_v(u, v)\varphi dx,\quad
\int_{\Omega} v(-\Delta)^s\psi dx=\int_{\Omega} H_u(u, v)\psi dx.
\]
If in addition $u, v\in L^{\infty}(\Omega)$, we call $(u, v)$ a {\sl classical solution}.
\end{definition}

Another important functional space for us is the set of $u$ such that $(-\Delta)^s u\in L^r(\Omega)$, namely for $r \geq 1$,
\begin{equation}\label{2401141656}
\mathcal{W}^{2s, r}(\Omega):=\{u\in L^1(\Omega): \exists\; f \in L^r(\Omega) \mbox{ such that \eqref{L1weak} holds true}\}
\end{equation}
endowed with the norm
$$\|u\|_{\mathcal{W}^{2s, r}(\Omega)}=|(-\Delta)^s u|_r.$$
We shall use $\mathcal{W}^{2s, r}(\Omega)$ to play the role of $W^{2, r}(\Omega)\cap W_0^{1, r}(\Omega)$ in the Laplacian case. In particular, we denote
\[
\mathcal{H}^{2s}(\Omega)=\mathcal{W}^{2s, 2}(\Omega).
\]
Some (compact) embedding properties of $\mathcal{W}^{2s, r}(\Omega)$ will be shown in Proposition \ref{2302102144}. 

\subsection{Fractional Hamiltonian system}
Motivated by \cite{Hv1993, dF1994}, we apply firstly interpolation space method to study the system \eqref{main}. For $0\le\alpha\le 2s$, consider the interpolation space
\begin{equation}\label{2406191546}
E^\alpha:=\Big\{u=\sum_{j\ge 1}a_j\varphi_j\in L^2(\Omega): \sum_{j\ge 1}\lambda_j^{\frac{\alpha}{s}}a_j^2<\infty \Big\},
\end{equation}
where $\lambda_j$ is the $j$-th eigenvalue of $(-\Delta)^s$ with corresponding eigenfunction $\varphi_j$, and $\{\varphi_j\}$ forms an orthonormal basis of $L^2(\Omega)$. For $\alpha \in (0, 2s)$, let $A^{\alpha}: E^{\alpha}\to L^{2}(\Omega)$ be given by
\begin{equation}\label{2406191601}
A^{\alpha}u:=\sum_{j\ge 1}\lambda_j^{\frac{\alpha}{2s}}a_j\varphi_j \in L^2(\Omega),\quad \forall \;u=\sum_{j\ge 1}a_j\varphi_j\in E^\alpha.
\end{equation}
$E^\alpha$ is clearly a Hilbert space with the scalar product
\[
(u,v)_{E^\alpha}= \int_{\Omega}A^{\alpha}uA^{\alpha}v dx = \sum_{j\ge 1}\lambda_j^{\frac{\alpha}{s}}\langle u, \varphi_j\rangle_{L^2}\langle v, \varphi_j\rangle_{L^2}.
\]
To handle the Hamiltonian system \eqref{main}, we define
\[
\mathbf{E}_{\alpha}:=E^{\alpha}\times E^{2s-\alpha}, \quad \mbox{$\alpha \in (0, 2s)$}.
\]
Instead of considering $\mathcal K$ over $X_0^s(\Omega)^2$, we consider the energy functional
\begin{equation}\label{2306171702}
\mathcal{E}(u,v)=\int_{\Omega}A^{\alpha}uA^{2s-\alpha}v dx-\int_{\Omega}H(u, v) dx,\quad \forall\; (u,v)\in \mathbf{E}_{\alpha}.
\end{equation}
The choice of $\mathbf{E}_{\alpha}$ originates from three observations.
\begin{itemize}
\item $\mathcal{H}^{2s}(\Omega) = E^{2s}$, seeing Proposition \ref{2209072124};
\item $E^\alpha$ can be embedded compactly into some $L^r(\Omega)$, seeing Proposition \ref{2302102144} and Remark \ref{2211231543};
\item Every critical point of $\mathcal{E}$ is a $L^1$-weak solution to \eqref{main}, seeing Proposition \ref{2302302237}.
\end{itemize}
Applying a linking theorem in \cite{Felmer1993}, we show the following existence result, which extends the study for the Laplacian case in \cite{dF1994}.
\begin{theorem}\label{220516}
Let $N>2s$ and $p, q>0$ satisfy \begin{equation}\label{condition}
 1>\frac1{p+1}+\frac1{q+1}>\frac{N-2s}{N},
\end{equation}
 and
\begin{equation}\label{Condition_pq}
(N-4s)\max\{p, q\}<N+4s.
\end{equation}
Assume that $H \in C^1(\mathbb{R}^2, \mathbb{R})$ satisfies
\begin{itemize}
\item[$(H1)$] $H \ge 0$ in $\mathbb{R}^2$;
\item[$(H2)$] There exists $R>0$ such that
\begin{equation}\label{2212032128}
\frac{1}{p+1}H_u(u,v)u+\frac{1}{q+1}H_v(u,v)v\ge H(u,v)>0, \quad \forall\; |u|+|v|\ge R;
\end{equation}
\item[$(H3)$] There exist $r>0$, $C>0$ such that
\begin{equation}\label{2212051248}
|H(u,v)|\le C(|u|^{p+1}+|v|^{q+1}), \quad \forall \; |u|+|v|\le r;
\end{equation}
\item[$(H4)$] There exists $C>0$ such that for any $(u, v) \in \R^2$,
\begin{equation}\label{2211201618}
|H_u(u,v)|\le C\Big(|u|^p+|v|^\frac{p(q+1)}{p+1}+1\Big),\quad |H_v(u,v)|\le C\Big(|v|^q+|u|^\frac{q(p+1)}{q+1}+1\Big).
\end{equation}
\end{itemize}
Then, there exists a nontrivial classical solution to \eqref{main}.
\end{theorem}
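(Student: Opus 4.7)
The plan is to apply a linking theorem of Felmer type (\cite{Felmer1993}) to the strongly indefinite functional $\mathcal{E}$ on $\mathbf{E}_\alpha$ for a carefully chosen $\alpha \in (0, 2s)$. The first step is to select $\alpha$ so that the embeddings $E^\alpha \hookrightarrow L^{p+1}(\Omega)$ and $E^{2s-\alpha} \hookrightarrow L^{q+1}(\Omega)$ are both compact. By the Sobolev-type embeddings of Proposition \ref{2302102144}, this amounts to $\alpha > \frac{N(p-1)}{2(p+1)}$ (needed only when $p>1$) and $2s-\alpha > \frac{N(q-1)}{2(q+1)}$ (needed only when $q>1$). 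A direct computation shows that an admissible $\alpha\in(0,2s)$ exists precisely when the subcritical condition \eqref{condition} holds together with the technical bound \eqref{Condition_pq}; the latter is what guarantees that both endpoints of the interval for $\alpha$ lie inside $(0,2s)$. With $\alpha$ fixed, (H3)-(H4) and the compact embeddings yield $\mathcal{E}\in C^1(\mathbf{E}_\alpha,\R)$, and Proposition \ref{2302302237} identifies its critical points with $L^1$-weak solutions of \eqref{main}.

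Next, I would split $\mathbf{E}_\alpha = E^+\oplus E^-$ into the maximal positive and negative subspaces of the quadratic form $Q(u,v)=\int_\Omega A^\alpha u\,A^{2s-\alpha} v\,dx$. In the eigenbasis $\{\varphi_j\}$, with $u=\sum a_j\varphi_j$ and $v=\sum b_j\varphi_j$, one has $Q(u,v)=\sum_j \lambda_j a_j b_j$, so a natural splitting is $E^{\pm}=\{(u,\pm T u):u\in E^\alpha\}$, where $T:E^\alpha\to E^{2s-\alpha}$ is the isomorphism sending $\sum a_j\varphi_j \mapsto \sum \lambda_j^{(2\alpha-2s)/(2s)}a_j\varphi_j$. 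Both $E^+$ and $E^-$ are infinite-dimensional, reflecting the strong indefiniteness and ruling out a naive mountain-pass approach.

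The third step is the Benci-Rabinowitz-type linking geometry. Fix a unit vector $e\in E^+$ and consider the sphere $S=\partial B_\rho \cap E^+$ together with the box $\mathcal{Q}=\{w+te:w\in E^-,\|w\|\le R,\,0\le t\le R\}$ with $R\gg\rho>0$. On $S$ the nonlinear term is controlled by $C(\rho^{p+1}+\rho^{q+1})$ via (H3) and the compact embeddings into $L^{p+1}\times L^{q+1}$, so for $\rho$ small the positive quadratic part dominates and $\mathcal{E}|_S\ge\beta>0$. For the upper bound $\mathcal{E}|_{\partial\mathcal{Q}}\le 0$: on the face $\{t=0\}\subset E^-$ it follows from $Q\le 0$ on $E^-$ and $H\ge 0$ from (H1); on the other (large) faces I would integrate the differential inequality (H2) to obtain $H(u,v)\ge c(|u|^{p+1}+|v|^{q+1})-C$, so that for $R$ large enough the superlinear growth swamps the quadratic contribution.

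The principal obstacle will be the Palais-Smale condition, which is delicate for strongly indefinite functionals. For a sequence $(u_n,v_n)$ with $\mathcal{E}(u_n,v_n)\to c$ and $\mathcal{E}'(u_n,v_n)\to 0$, I would test $\mathcal{E}'(u_n,v_n)$ against $\bigl(\tfrac{u_n}{p+1},\tfrac{v_n}{q+1}\bigr)$: using (H2) and the fact that \eqref{condition} gives $\tfrac{1}{p+1}+\tfrac{1}{q+1}<1$, this yields a bound on $\int_\Omega H(u_n,v_n)\,dx$ and hence on the cross-term $|Q(u_n,v_n)|$. To upgrade to $\mathbf{E}_\alpha$-boundedness, I would test $\mathcal{E}'(u_n,v_n)$ against $w_n^+-w_n^-$, where $(u_n,v_n)=w_n^++w_n^-$: the diagonalized form of $Q$ produces $\|w_n^+\|^2+\|w_n^-\|^2$ from the quadratic part, while the remaining nonlinear integral is controlled by (H4) and H\"older. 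Strong convergence of a subsequence then follows because the compact embeddings render $(u,v)\mapsto(H_u(u,v),H_v(u,v))$ a compact map into the dual, so $\mathcal{E}'$ is a compact perturbation of an invertible operator on each of $E^\pm$. Felmer's theorem then yields a critical point $(u,v)$ with $\mathcal{E}(u,v)\ge\beta>0$, hence nontrivial. Finally, a Moser-type bootstrap based on (H4), Lemma \ref{2402061816} and iteration promotes $(u,v)$ from $L^1$-weak to $L^\infty$, producing the desired classical solution.
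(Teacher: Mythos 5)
Your overall strategy is the paper's: pick $\alpha$ via the compact-embedding constraints (your computation correctly identifies \eqref{Condition_pq} as the condition that the admissible interval for $\alpha$ sits inside $(0,2s)$), split $\mathbf{E}_\alpha=\mathbf{E}_\alpha^+\oplus\mathbf{E}_\alpha^-$ exactly as in the paper (your $T$ is $A^{-2s+2\alpha}$), apply Felmer's linking theorem, prove $(PS)$ by first bounding $\int_\Omega H$ via $(H2)$ and then the full norm via $(H4)$, and bootstrap regularity at the end. However, there is a genuine gap in your verification of the linking geometry. You take $S=\partial B_\rho\cap E^+$ and argue that on $S$ the nonlinear term is $O(\rho^{p+1}+\rho^{q+1})$, which is dominated by the quadratic part $\tfrac12\rho^2$ for small $\rho$. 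This only works when both $p>1$ and $q>1$. Condition \eqref{condition} merely requires $\tfrac1{p+1}+\tfrac1{q+1}<1$, which permits, say, $p=\tfrac12$ and $q$ large; then $\rho^{p+1}=\rho^{3/2}$ beats $\rho^{2}$ as $\rho\to0$ and your lower bound on $S$ fails. This is precisely why the paper invokes Felmer's version of the linking theorem with the auxiliary dilation operators $B_1(u,v)=(\rho^{\mu-1}u,\rho^{\nu-1}v)$, $B_2(u,v)=(R_1^{\mu-1}u,R_1^{\nu-1}v)$, with $\mu,\nu$ chosen so that $\tfrac1{p+1}<\tfrac{\mu}{\mu+\nu}$ and $\tfrac1{q+1}<\tfrac{\nu}{\mu+\nu}$: the anisotropic scaling makes $\rho^{\mu(p+1)}$ and $\rho^{\nu(q+1)}$ both $o(\rho^{\mu+\nu})$, restoring the positive lower bound on the (distorted) sphere. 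You cite Felmer's theorem but do not use this feature, which is its whole point here.

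A second, smaller soft spot is the estimate $\mathcal{E}\le 0$ on the faces of $\partial\mathcal{Q}$ with $t>0$. Writing a point of $\mathcal{Q}$ as $te+w$ with $w\in E^-$, the lower bound $H(u,v)\ge c(|u|^{p+1}+|v|^{q+1})-C$ does not immediately ``swamp'' the quadratic part, because $w$ could in principle nearly cancel $te$ in $L^{p+1}\times L^{q+1}$. The paper's Step 2 rules this out by choosing $e$ built from the first eigenfunction $\varphi_1$ and computing the $\varphi_1$-Fourier coefficient of $t\widetilde\varphi_1+u^-$ (splitting into the cases $r\ge0$ and $r<0$ according to the sign of that coefficient of $u^-$), which shows that either the $u$-component or the $v$-component of $te+w$ has $L^{p+1}$ (resp.\ $L^{q+1}$) norm bounded below by a multiple of $t$. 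You need some version of this argument; as written, your claim is an assertion rather than a proof. The rest of your outline (the compactness of $H_z$, the two-step boundedness argument in $(PS)$, and the bootstrap to a classical solution) matches the paper and is sound.
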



Note that Theorem \ref{220516} requires the assumption \eqref{Condition_pq}. For getting the existence of solutions to \eqref{main} in a more broad range of $p$, $q$, we will apply also the dual method. Three major difficulties subsist.
\begin{itemize}
\item The first problem is still to construct suitable functional framework. We choose (see section \ref{2306141749}) the energy functional as follows
\begin{equation}\label{2211011351}
\mathcal{J}(f, g)=\mathcal{H}^*(f, g)-\int_{\Omega}g\mathcal{A}f dx,\quad \forall \, (f, g)\in X:=L^{1+\frac1p}(\Omega)\times L^{1+\frac1q}(\Omega)
\end{equation}
where $\mathcal{A}$ is the inverse of $(-\Delta)^s: \mathcal{W}^{2s, 1+\frac1p}(\Omega)\to L^{1+\frac1p}(\Omega)$ and $\mathcal{H}^*$ denotes the Legendre-Fenchel transform of 
\[
\mathcal{H}(u, v)=\int_{\Omega}H(u, v) dx, \quad \forall\; (u, v)\in X^*=L^{p+1}(\Omega)\times L^{q+1}(\Omega).
\]
\item Secondly, we need to verify the differentiability of $\mathcal{J}$, and the correspondence between critical points of $\mathcal{J}$ and solutions to \eqref{main}. It should be mentioned that these arguments were not proven explicitly in \cite{CV1995} for the Laplacian case.
    \item Finally, we need to check the compact embedding properties of $\mathcal{W}^{2s, 1+\frac1p}(\Omega)$ in order to check the Palais-Smale condition.
\end{itemize}

To deal with these difficulties, we use properties of Legendre-Fenchel transform (see Lemma \ref{2302252143}) to ensure the well-definedness of $\mathcal{J}$. For  the differentiability of $\mathcal{J}$, we will prove in Lemma \ref{2211072137} that
\[
\mathcal{H}^*(f, g)=\int_{\Omega}H^*(f, g) dx,\quad \forall\; (f, g)\in X,
\]
where $H^*$ is the Legendre-Fenchel transform of $H: \mathbb{R}^2\to \mathbb{R}$.  We use the fact
$(\nabla H)^{-1}=\nabla H^*$ (see Lemma \ref{22112307}) which guarantees that $\mathcal{H}^*$ is well defined and of class $C^1$ over $X$. Moreover, under the superlinear growth assumption of $H$, $\mathcal{J}$ has a mountain pass geometry, and $H_z^*(f, g) := \nabla H^*(f, g)$ provides a weak solution to \eqref{main} given any critical point $(f, g)\in X$ for $\mathcal{J}$, see Proposition \ref{2211171427}. The compact embeddings of $\mathcal{W}^{2s, 1+\frac1p}(\Omega)$ are given in Proposition \ref{2302102144}.

\begin{theorem}\label{2211092215}
Let $N>2s$, $p, q>0$ satisfy \eqref{condition}. Assume that $H \in C^1(\mathbb{R}^2, \mathbb{R})$ satisfies
\begin{itemize}
\item[$(H5)$] There exist positive numbers $C_1, C_2$ such that
\[
\begin{split}
&C_1|u|^{p+1}\le H_u(u,v)u\le C_2\left( |u|^{p+1}+  |u|^{\alpha}|v|^{\beta}\right),\\
&C_1|v|^{q+1}\le H_v(u,v)v\le C_2\left( |v|^{q+1}+  |u|^{\alpha}|v|^{\beta}\right),
\end{split}
\]
 with
\begin{equation}\label{2402051803}
\frac{\alpha}{p+1}+\frac{\beta}{q+1}=1,\;\; \alpha, \beta>1;
\end{equation}
\item[$(H6)$] $\nabla H$ is strictly monotone, i.e.
\[
\langle \nabla H(u_1, v_1)-\nabla H(u_2, v_2),\, (u_1,v_1)- (u_2,v_2)\rangle>0,\quad \mbox{for any disjoint } (u_1, v_1),\, (u_2, v_2)\in \mathbb{R}^2;
\]
\item[$(H7)$] There is $\theta \in (0, 1)$ and positive numbers $C_3$, $C_4$ such that
\[
\theta H_u(u,v)u+ (1- \theta)H_v(u,v)v- H(u,v)\ge C_3\Big(|u|^{p+1}+|v|^{q+1}\Big)-C_4.
\]
\end{itemize}
Then there exists a nontrivial classical solution to \eqref{main}.
\end{theorem}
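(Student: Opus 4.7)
To run the dual strategy, I first control the Legendre-Fenchel transform $H^*$. Starting from $(H5)$, Young's inequality under \eqref{2402051803} absorbs $|u|^\alpha|v|^\beta$ into $|u|^{p+1}+|v|^{q+1}$, and integration of $H_u u+H_v v$ along rays gives matching two-sided bounds $c(|u|^{p+1}+|v|^{q+1})-C\le H(u,v)\le C(|u|^{p+1}+|v|^{q+1})+C$. Lemmas \ref{2302252143}, \ref{22112307} and \ref{2211072137} then translate these to $c(|f|^{1+1/p}+|g|^{1+1/q})-C\le H^*(f,g)\le C(|f|^{1+1/p}+|g|^{1+1/q})+C$, so $\mathcal{J}$ is well defined and $C^1$ on $X=L^{1+1/p}(\Omega)\times L^{1+1/q}(\Omega)$. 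The subcritical condition \eqref{condition} is precisely the threshold for the compact embedding $\mathcal{W}^{2s,1+1/p}(\Omega)\hookrightarrow L^{q+1}(\Omega)$ (Proposition \ref{2302102144}), which makes the bilinear form continuous on $X$. Strict monotonicity $(H6)$ yields the inverse relation $(\nabla H)^{-1}=\nabla H^*$, and Proposition \ref{2211171427} maps critical points of $\mathcal{J}$ to $L^1$-weak solutions via $(u,v)=\nabla H^*(f,g)$.

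\textbf{Mountain-pass geometry.} For the local barrier I use the scale $\rho(f,g)^2:=|f|_{1+1/p}^{(p+1)/p}+|g|_{1+1/q}^{(q+1)/q}$; the lower bound on $H^*$ gives $\int_\Omega H^*\,dx\ge c\rho^2-C$, while $|\int_\Omega g\,\mathcal{A}f\,dx|\le C|f|_{1+1/p}|g|_{1+1/q}\le C\rho^{2(2pq+p+q)/(pq+p+q+1)}$. The left-hand inequality in \eqref{condition} is equivalent to $pq>1$, which makes this last exponent strictly greater than $2$, so $\mathcal{J}\ge\delta>0$ on the level set $\{\rho=\rho_0\}$ for small $\rho_0$. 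For a descent point, I pick $(f_0,g_0)$ both positive so that $\mathcal{A}f_0>0$ by Lemma \ref{2306012122} and $\int_\Omega g_0\mathcal{A}f_0\,dx>0$, then scan along the curve $(t^af_0,t^bg_0)$ with $a,b>0$ satisfying $a/p<b<aq$ (possible exactly when $pq>1$); both nonlinear exponents $a(1+1/p)$ and $b(1+1/q)$ are then strictly less than $a+b$, so the bilinear term dominates and $\mathcal{J}(t^af_0,t^bg_0)\to-\infty$ as $t\to\infty$.

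\textbf{Palais-Smale and conclusion.} For a PS sequence $(f_n,g_n)$ at level $c>0$, set $(u_n,v_n):=\nabla H^*(f_n,g_n)$, equivalently $f_n=H_u(u_n,v_n)$, $g_n=H_v(u_n,v_n)$. Testing $\mathcal{J}'(f_n,g_n)$ against $((1-\theta)f_n,\theta g_n)$ with $\theta$ from $(H7)$ produces
\begin{equation*}
\mathcal{J}(f_n,g_n)-\mathcal{J}'(f_n,g_n)\cdot((1-\theta)f_n,\theta g_n)=\int_\Omega\bigl[\theta H_u u_n+(1-\theta)H_v v_n-H(u_n,v_n)\bigr]dx,
\end{equation*}
which by $(H7)$ is bounded below by $C_3\int_\Omega(|u_n|^{p+1}+|v_n|^{q+1})\,dx-C$; combined with the Young-type estimate $|H_u|^{(p+1)/p}+|H_v|^{(q+1)/q}\le C(|u|^{p+1}+|v|^{q+1})$ (again exploiting \eqref{2402051803}), this forces $(f_n,g_n)$ bounded in $X$. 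Passing to a weak limit $(f,g)$, compactness of $\mathcal{A}:L^{1+1/p}(\Omega)\to L^{q+1}(\Omega)$ and $L^{1+1/q}(\Omega)\to L^{p+1}(\Omega)$ (Proposition \ref{2302102144}) yields strong convergence $\mathcal{A}f_n\to\mathcal{A}f$, $\mathcal{A}g_n\to\mathcal{A}g$, hence $u_n\to u:=\mathcal{A}g$ and $v_n\to v:=\mathcal{A}f$ strongly using $\mathcal{J}'(f_n,g_n)\to 0$, and Nemytskii continuity of $\nabla H$ at the subcritical growth from $(H5)$ closes $(f_n,g_n)\to(f,g)$ strongly in $X$. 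The Mountain-Pass Theorem then delivers a critical point with $\mathcal{J}=c>0$, hence nontrivial, whose image $(u,v)=\nabla H^*(f,g)$ is a nontrivial $L^1$-weak solution of \eqref{main}; a bootstrap using Lemma \ref{2402061816} and the embeddings in Proposition \ref{2302102144} upgrades it to a classical solution. The main obstacle is this Palais-Smale step: the coercivity from $(H7)$ lives in the primal $L^{p+1}\times L^{q+1}$ rather than directly in $X$, so boundedness of $(f_n,g_n)$ must be routed through the nonlinear Legendre map, and upgrading weak to strong convergence requires combining compactness of $\mathcal{A}$ with Nemytskii continuity of $\nabla H$ at the subcritical exponent determined by \eqref{condition}.
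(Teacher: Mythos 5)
Your proposal follows essentially the same dual-method route as the paper's Section on Theorem \ref{2211092215}: the same functional $\mathcal{J}$ on $X$, the same identification $(\nabla H)^{-1}=\nabla H^*$ via $(H6)$, the same test pair $((1-\theta)f_n,\theta g_n)$ combined with $(H7)$ for Palais--Smale boundedness, and the same closing argument via compactness of $\mathcal{A}$ plus Nemytskii continuity of $\nabla H$, with only a cosmetic difference in how the mountain-pass sphere is parametrized (your level set of $\rho$ versus the paper's weighted spheres $S_\rho$). The one point to tidy is the spurious additive constant in your lower bound $\int_\Omega H^*\,dx\ge c\rho^2-C$: as written it would defeat the small-$\rho_0$ barrier, but after the harmless normalization $H(0,0)=0$ your own ray-integration of $(H5)$ gives the bound with $C=0$, which is what the argument actually needs.
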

\begin{remark}
$(H5)$ implies indeed $(H1)$ and $(H4)$.  $(H7)$ is useful to prove that every Palais-Smale sequence of $\mathcal{J}$ is bounded, see Lemma \ref{2211171422}. Without loss of generality, we can assume $H(0, 0)=0$. Otherwise, we replace $H(u, v)$ by $H(u, v)-H(0, 0)$.
\end{remark}

\begin{remark}
The nonlinearities $H$ in Theorems \ref{220516}, \ref{2211092215} both have subcritical and superlinear growth.
But some differences exist between the two families of assumptions. For example, let $H$ have the form
\begin{equation}\label{2402052125}
H_\varepsilon(u, v)=|u|^{p+1}+|v|^{q+1}+\varepsilon |u|^{\alpha}|v|^{\beta},
\end{equation}
where $p, q$ satisfy \eqref{condition}, and $\alpha, \beta$ satisfy \eqref{2402051803}. When \eqref{Condition_pq} holds true, $H_\varepsilon$ satisfies $(H1)$-$(H4)$ for all $\varepsilon > 0$. On the other hand, we need not \eqref{Condition_pq} in Theorem \ref{2211092215}, but the strictly convexity assumption $(H6)$ fails for $H_\varepsilon$ when $\varepsilon$ is sufficiently large. In other words, Theorem \ref{220516} holds for more broad coupling nonlinearities but requires narrow choices of $p, q$; Theorem \ref{2211092215} can work for all subcritical and superlinear $p, q$, meanwhile the strict convexity is more restrictive for the coupling term.
\end{remark}

\subsection{Fractional Lane-Emden system}
As a special example, we revisit the fractional Lane-Emden system
\begin{equation}\label{2305191141}
\begin{cases}
\begin{aligned}
(-\Delta)^{s} u&=|v|^{q-1}v \,&&\text{in}~\Omega,\\
(-\Delta)^{s} v&=|u|^{p-1}u \,&&\text{in}~\Omega,\\
u &= v = 0 &&\text{in} ~ \mathbb{R}^N\setminus\Omega,\\
\end{aligned}
\end{cases}
\end{equation}
where $s\in (0, 1)$, $p, q\in (0, \infty)$, $N>2s$. As for the  classical Laplacian case, we consider subcritical exponents $p, q$, that is
\begin{equation}\label{2305191926}
p, q>0,\quad\frac{1}{p+1}+\frac{1}{q+1}>\frac{N-2s}{N}.
\end{equation}

Leite and Montenegro \cite{LM2019} showed the existence of positive viscosity solutions to \eqref{2305191141} under the subcritical condition \eqref{2305191926}, they reduced \eqref{2305191141} into a single equation and consider energy functional on $W_0^{1, 1+\frac1q}(\Omega)\cap W^{2s, 1+\frac1q}(\Omega)$.
 Choi and Kim \cite{CK2019, Choi2015} studied the related problems with respect to spectral fractional Laplacian.

Different from \cite{LM2019}, we work with the functional space $\mathcal{W}^{2s, 1+\frac1q}(\Omega)$.
Comparing with \cite{LM2019}, our setting avoid many regularity problems since every $\mathcal{W}^{2s, 1+\frac1q}(\Omega)$ solution is naturally a $L^1$-weak solution. More precisely, we consider
\begin{equation}\label{2305282030}
\begin{cases}
\begin{aligned}
(-\Delta)^{s}\left(|(-\Delta)^{s}u|^{\frac1q-1}(-\Delta)^{s}u\right) &=|u|^{p-1}u  \;&&\text{in}~\Omega,\\
u &=(-\Delta)^{s}u=0  &&\text{in} ~ \mathbb{R}^N\setminus\Omega.\\
\end{aligned}
\end{cases}
\end{equation}

\begin{definition}
\label{fenergysol}
We call $u$ an energy solution of \eqref{2305282030}, if $u\in \mathcal{W}^{2s, 1+\frac1q}(\Omega)$ and
\begin{equation}\label{2305282038}
\int_{\Omega}|(-\Delta)^{s}u|^{\frac1q-1}(-\Delta)^{s}u(-\Delta)^{s}\varphi dx=\int_{\Omega}|u|^{p-1}u\varphi dx,\quad \forall\,\varphi\in \mathcal{W}^{2s, 1+\frac1q}(\Omega).
\end{equation}
\end{definition}
Note that the above terms are well defined since $(-\Delta)^{s}u\in L^{1+\frac{1}{q}}(\Omega)$ for any $u\in \mathcal{W}^{2s, 1+\frac1q}(\Omega)$. Putting $v=|(-\Delta)^{s}u|^{\frac1q-1}(-\Delta)^{s}u$, we can prove that $(u, v)$ is a classical solution to \eqref{2305191141} if and only if $u$ is an energy solution to \eqref{2305282030}, see Proposition \ref{2306130008}.

Moreover, solutions to \eqref{2305282030} coincide with critical points of the following $C^1$ functional in $\mathcal{W}^{2s, 1+\frac1q}(\Omega)$,
\[
\mathcal{I}(u) :=\frac{q}{q+1}\left|(-\Delta)^su\right|_{1+\frac1q}^{1+\frac1q}-\frac1{p+1}|u|_{p+1}^{p+1}.
\]
Consider the Nehari manifold associated to $\mathcal{I}$,
\begin{equation}\label{2406220952}
\mathcal{N}_{\mathcal{I}}:=\{u\in \mathcal{W}^{2s, 1+\frac1q}(\Omega)\backslash \{0\}: \langle \mathcal{I}'(u), u\rangle=0\},
\end{equation}
and the ground state level is defined as 
\begin{equation}\label{2306152335}
c_{\mathcal{I}}:=\underset{u\in \mathcal{N}_{\mathcal{I}}}{\inf}\mathcal{I}(u).
\end{equation}
We establish the existence of positive solutions to \eqref{2305191141} by showing that $c_{\mathcal{I}}$ can be attained. 
\begin{theorem}\label{2306122224}
Assume that $s\in (0, 1)$, $N>2s$, $p, q$ satisfy \eqref{2305191926} and $pq\neq 1$. Then
\begin{itemize}
\item[\rm (i)] \eqref{2305191141} admits a positive classical ground state solution;
\item[\rm (ii)] If $pq<1$, the positive classical solution of \eqref{2305191141} is unique;
\item[\rm (iii)] If $\Omega$ is a ball, then \eqref{2305191141} has a positive radially symmetric classical solution.
\end{itemize}
\end{theorem}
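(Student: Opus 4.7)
The plan is to realize positive solutions of \eqref{2305191141} as minimizers of the $C^1$ energy $\mathcal{I}$ on its Nehari manifold $\mathcal{N}_{\mathcal{I}}$ over $\mathcal{W}^{2s,1+\frac1q}(\Omega)$, using the compact embedding $\mathcal{W}^{2s,1+\frac1q}(\Omega)\hookrightarrow L^{p+1}(\Omega)$ from Proposition \ref{2302102144} (valid under \eqref{2305191926}) and the equivalence between classical solutions of \eqref{2305191141} and energy solutions of \eqref{2305282030} from Proposition \ref{2306130008}. On $\mathcal{N}_{\mathcal{I}}$ the Nehari identity gives
\[
\mathcal{I}(u)=\frac{pq-1}{(p+1)(q+1)}|u|_{p+1}^{p+1}=\frac{pq-1}{(p+1)(q+1)}\big|(-\Delta)^s u\big|_{1+\frac1q}^{1+\frac1q},
\]
which, combined with the embedding inequality $|u|_{p+1}\leq C|(-\Delta)^s u|_{1+\frac1q}$, yields a lower bound on $|(-\Delta)^s u|_{1+\frac1q}$ when $pq>1$ (so $c_{\mathcal{I}}>0$) and an upper bound when $pq<1$ (so $c_{\mathcal{I}}$ is finite and negative). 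For a minimizing sequence $\{u_n\}\subset \mathcal{N}_{\mathcal{I}}$, extract a weak limit $u_0$ with $u_n\to u_0$ strongly in $L^{p+1}(\Omega)$; the Nehari relation together with $|u_0|_{p+1}=\lim|u_n|_{p+1}$ forces $u_0\neq 0$, and the unique positive ray-projection $tu_0\in \mathcal{N}_{\mathcal{I}}$ (well-defined because $pq\neq 1$), combined with weak lower semicontinuity of $u\mapsto |(-\Delta)^s u|_{1+\frac1q}^{1+\frac1q}$ and strong $L^{p+1}$-convergence, produces a minimizer that is a critical point of $\mathcal{I}$ by Lagrange multipliers. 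Repeating the construction with the truncated functional
\[
\mathcal{I}^+(u)=\frac{q}{q+1}\big|(-\Delta)^s u\big|_{1+\frac1q}^{1+\frac1q}-\frac{1}{p+1}|u_+|_{p+1}^{p+1},
\]
the Euler-Lagrange equation $(-\Delta)^s v_0=u_{0,+}^{\,p}\geq 0$ gives $v_0\geq 0$ via Lemma \ref{2306012122}; then $(-\Delta)^s u_0=v_0^q\geq 0$ gives $u_0\geq 0$, so $u_{0,+}=u_0$ and $(u_0,v_0)$ solves \eqref{2305191141} with $u_0,v_0>0$ in $\Omega$ by the strong maximum principle, proving (i).

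For (ii), let $(u_1,v_1),(u_2,v_2)$ be two positive classical solutions. By Lemma \ref{2402061816}, $u_i/\delta^s$ and $v_i/\delta^s$ extend continuously and strictly positively to $\overline{\Omega}$, so there exist $a_0,b_0>0$ with $u_1\geq a_0 u_2$ and $v_1\geq b_0 v_2$. Using $(-\Delta)^s u_1=v_1^q\geq b_0^q\,v_2^q=b_0^q(-\Delta)^s u_2$ and the $L^1$-weak comparison principle of Lemma \ref{2306081933} gives $u_1\geq b_0^q u_2$, and symmetrically $v_1\geq a_0^p v_2$. The resulting iteration $a_{n+1}=\max(a_n,b_n^q)$, $b_{n+1}=\max(b_n,a_n^p)$ is non-decreasing, and since $pq<1$ the ``exponent drift'' in $(a_n,b_n)$ decays geometrically, forcing $(a_n,b_n)\to(1,1)$. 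Hence $u_1\geq u_2$ and $v_1\geq v_2$; swapping the two solutions yields $u_1=u_2$ and $v_1=v_2$. For (iii), restrict the construction of (i) applied to $\mathcal{I}^+$ to the closed subspace of radial functions $\mathcal{W}^{2s,1+\frac1q}_{\text{rad}}(B)$, which inherits the compact embedding into $L^{p+1}(B)$; this yields a radial positive critical point of $\mathcal{I}^+$ on the radial subspace, and the $SO(N)$-invariance of $\mathcal{I}^+$ together with Palais' principle of symmetric criticality promotes it to a critical point of $\mathcal{I}$ on the full space. When $pq<1$, (ii) together with rotational invariance gives an alternative direct proof.

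The main obstacle is the uniqueness argument in (ii): the initial ratio $u_1/u_2$ is controlled only near $\partial\Omega$ through the precise Hopf-type boundary asymptotics of Lemma \ref{2402061816}, and each iteration step must be justified in the $L^1$-weak sense since pointwise maximum principles are not directly available in this functional framework; moreover, the sharp decay of the iterated exponents requires using both inequalities $pq<1$ and the strict positivity on $\overline\Omega$ after rescaling. A secondary subtlety in (i) is that the ray-projection onto $\mathcal{N}_{\mathcal{I}^+}$ involves the non-smooth term $|u_+|_{p+1}^{p+1}$, so one must re-verify the uniqueness and the $C^1$ dependence of the projection as well as the Palais-Smale compactness, since the sign of the minimizing sequence is a priori not controlled.
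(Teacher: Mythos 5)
Your plan for (ii) and (iii) is sound and essentially parallels the paper. For (ii), the paper works directly with the optimal constant $\beta_1=\sup\{l:u_1\ge lu_2\}$ and shows in one shot that $u_1\ge\beta_1^{pq}u_2$, whence $\beta_1\ge\beta_1^{pq}$ and $\beta_1\ge1$ since $pq<1$; your iteration $a_{n+1}=\max(a_n,b_n^q)$, $b_{n+1}=\max(b_n,a_n^p)$ reaches the same conclusion, but note the sequence converges to a limit $\ge1$, not necessarily to $1$ — what matters (and what holds) is that the limit satisfies $a_\infty\ge b_\infty^q\ge a_\infty^{pq}$, forcing $a_\infty\ge1$. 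For (iii) the paper simply reruns the whole argument in $\mathcal{W}^{2s,1+\frac1q}_{\rm rad}(B_R)$, relying implicitly on Remark \ref{2402062112} to see that the constrained critical point solves the full equation; your invocation of symmetric criticality is an acceptable alternative, but you should be aware that $\mathcal{W}^{2s,1+\frac1q}(\Omega)$ is a Banach, not Hilbert, space, so a careful statement of Palais' principle in that generality is needed.

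The genuine gap is in (i). You replace $\mathcal{I}$ by the truncated functional $\mathcal{I}^+$ and minimize on $\mathcal{N}_{\mathcal{I}^+}$. This indeed yields a nonnegative (then positive) critical point $u_0$ with $\mathcal{I}(u_0)=\mathcal{I}^+(u_0)=c_{\mathcal{I}^+}$. But the theorem asks for a \emph{ground state}, i.e.~a minimizer of $\mathcal{I}$ on $\mathcal{N}_{\mathcal{I}}$, and a priori $c_{\mathcal{I}^+}\ge c_{\mathcal{I}}$ only (since $u_0\in\mathcal{N}_{\mathcal{I}}$), with no obvious reverse inequality: for a general sign-changing $v\in\mathcal{N}_{\mathcal{I}}$ one has $\mathcal{I}^+(tv)\ge\mathcal{I}(tv)$, which goes the wrong way. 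The paper sidesteps this completely: starting from the (possibly sign-changing) minimizer $u$ of $\mathcal{I}$ on $\mathcal{N}_{\mathcal{I}}$, it solves $(-\Delta)^s w=|(-\Delta)^s u|$, obtains $w\ge|u|$ with the \emph{same} $\mathcal{W}^{2s,1+\frac1q}$-norm, so that $\mathcal{I}(tw)\le\mathcal{I}(tu)$ for every $t>0$; projecting $w$ onto $\mathcal{N}_{\mathcal{I}}$ then gives a nonnegative minimizer at the \emph{same} level $c_{\mathcal{I}}$, and positivity follows from Lemma \ref{2306012122}. If you wish to keep the truncated-functional route you must additionally prove $c_{\mathcal{I}^+}\le c_{\mathcal{I}}$, and the natural way to do so is precisely the comparison $(-\Delta)^s w=|(-\Delta)^s u|$ from the paper — at which point the truncation adds nothing and costs you the extra work you flagged (re-verifying $C^1$-regularity, ray-projection, and the Palais–Smale condition for $\mathcal{I}^+$).
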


When $\Omega$ is a ball and $pq<1$, the uniqueness of positive classical solution to \eqref{2305191141} ensures that the solution is radially symmetric. For $pq>1$, we can not claim directly as in \cite{BMR2012} that the ground state solution is radially symmetric, but we can work simply with $\mathcal{W}_{{\rm rad}}^{2s, r}(B_R)$, the subset of radial functions in $\mathcal{W}^{2s, r}(B_R)$. In fact, for $f\in C(\overline{B_R})$, $f\ge 0$, if $u$, $w$ are respectively solutions of
 \begin{equation*}
\begin{cases}
\begin{aligned}
(-\Delta)^{s} u&=f &&\text{in}~ B_R,\\
u&=0~&&\text{in} ~ \mathbb{R}^N\setminus\overline{B_R},\\
\end{aligned}
\end{cases}
\quad\quad
\begin{cases}
\begin{aligned}
(-\Delta)^{s} w&=f^{\#} &&\text{in}~B_R,\\
w&=0~&&\text{in} ~ \mathbb{R}^N\setminus\overline{B_R},\\
\end{aligned}
\end{cases}
\end{equation*}
where $f^{\#}$ denotes the radial decreasing rearrangement of $f$, we do not have always $u^{\#}\le w$, see \cite{FV2021}, while it is true for $s=1$. 


\begin{remark}
For any classical solution $(u, v)$ of \eqref{2305191141}, we have
\[
\int_{\Omega}|v|^{q+1} dx=\langle (-\Delta)^s u, v \rangle=\langle (-\Delta)^s v, u \rangle=\int_{\Omega}|u|^{p+1} dx.
\]
Combining with $\langle \mathcal{I}'(u), u\rangle=0$, there holds
$\mathcal{K}(u, v)=\mathcal{I}(u)$. Hence $\mathcal{K}(u, v)$ reaches the minimal energy among all classical solutions if $u$ is a ground state of \eqref{2305282030}.
\end{remark}


The paper is organized as follows. In section \ref{2306052054}, we introduce some preliminary results. In section \ref{2306141421}, we construct an interpolation space setting and prove Theorem \ref{220516}. In section \ref{2306141749}, we construct the framework of dual method to show Theorem \ref{2211092215}. In section \ref{2306081126}, we revisit the fractional Lane-Emden system \eqref{2305191141}.

\section{Notations and preliminary}\label{2306052054}
Throughout this paper, we use the following notations.
\begin{itemize}
\item We denote by $\nabla H$ the gradient of $H$ in $\R^2$, and we write $H_z(u, v) := \nabla H(u(x), v(x))$ for the functional variable case where $z=(u, v)$. The same convention is used for the Legendre-Fenchel transform $H^*$.
\item $C, C', C_1, C_2,...$ denote always generic positive constants.
\item For $r\in [1, \infty]$, we denote by $|u|_r$ the usual $L^r(\Omega)$ or $L^r(\R^N)$ norm.
\item For any $p, q > 0$, we set $X:=L^{1+\frac1p}(\Omega)\times L^{1+\frac1q}(\Omega)$ with norm $\|(f, g)\|_{X}=|f|_{1+\frac1p}+|g|_{1+\frac1q}$, hence its dual space is $X^*=L^{p+1}(\Omega)\times L^{q+1}(\Omega)$.
\item The Hamiltonian functional is denoted by
\[
\mathcal{H}(u, v) =\int_{\Omega}H(u, v) dx,\quad (u, v)\in X^*.
\]
\end{itemize}


\subsection{Basic properties for $L^1$-weak solutions}
Here we show some elementary facts for $L^1$-weak solutions. Many of them were established for solutions in $X_0^s(\Omega)$, however it's worthy to check carefully for $L^1$-weak solutions. 

Let us begin with the comparison principle. We say that
 $(-\Delta)^su \le (-\Delta)^sv$ in the $L^1$-weak sense, if $u, v\in L^1(\Omega)$ and
 \begin{equation}\label{2306011618}
 \int_{\Omega}(u-v)(-\Delta)^s\varphi dx\le 0,\quad \forall\,\varphi\in \mathcal{T}_s(\Omega) \mbox{ with } (-\Delta)^s\varphi\ge 0.
 \end{equation}
By the definition of $\mathcal{T}_s(\Omega)$, as $(-\Delta)^s\varphi$ fulfilled $C_c^\infty(\Omega)$, we get immediately
\begin{lemma}\label{2306081933}
If $(-\Delta)^su \le (-\Delta)^sv$ in the $L^1$-weak sense, then $u\le v$ a.e.~in $\Omega$.
\end{lemma}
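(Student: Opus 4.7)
The plan is to exploit the surjectivity of the map $(-\Delta)^s : \mathcal{T}_s(\Omega) \to C_c^\infty(\Omega)$, which is essentially built into the definition of $\mathcal{T}_s(\Omega)$ together with Lemma \ref{2402061816}. The key observation is that the inequality \eqref{2306011618} is stated for all test functions $\varphi \in \mathcal{T}_s(\Omega)$ whose fractional Laplacian is nonnegative; since every nonnegative $\psi \in C_c^\infty(\Omega)$ arises as $(-\Delta)^s \varphi$ for some such $\varphi$, the hypothesis will directly give $\int_\Omega (u-v)\psi\, dx \le 0$ for all nonnegative $\psi \in C_c^\infty(\Omega)$, whence $u \le v$ a.e.\ in $\Omega$ by the fundamental lemma of the calculus of variations.

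Concretely, the steps I would carry out are: first, fix an arbitrary $\psi \in C_c^\infty(\Omega)$ with $\psi \ge 0$. Since $\psi \in L^\infty(\Omega)$, Lemma \ref{2402061816} provides a (unique) $\varphi \in X_0^s(\Omega)$ solving
\[
(-\Delta)^s \varphi = \psi \quad \text{in } \Omega, \qquad \varphi = 0 \quad \text{in } \R^N \setminus \Omega.
\]
By construction $(-\Delta)^s \varphi = \psi \in C_c^\infty(\Omega)$, so $\varphi$ belongs to the test function class $\mathcal{T}_s(\Omega)$, and moreover $(-\Delta)^s \varphi \ge 0$, making $\varphi$ admissible in \eqref{2306011618}. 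Substituting yields
\[
\int_\Omega (u - v)\, \psi \, dx \;=\; \int_\Omega (u-v)(-\Delta)^s \varphi \, dx \;\le\; 0.
\]

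Since $\psi \ge 0$ in $C_c^\infty(\Omega)$ was arbitrary, we conclude $u - v \le 0$ a.e.\ in $\Omega$, which is the desired conclusion. There is essentially no obstacle: the whole content of the lemma is encoded in the definition of $\mathcal{T}_s(\Omega)$, and the only external input is the solvability provided by Lemma \ref{2402061816}. This matches the author's parenthetical comment that ``as $(-\Delta)^s \varphi$ fulfilled $C_c^\infty(\Omega)$, we get immediately'' the result.
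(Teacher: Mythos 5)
Your proof is correct and is precisely the argument the paper has in mind: the remark preceding the lemma indicates that since $(-\Delta)^s$ maps $\mathcal{T}_s(\Omega)$ onto $C_c^\infty(\Omega)$, one may test against any nonnegative $\psi\in C_c^\infty(\Omega)$ and conclude by the fundamental lemma of the calculus of variations. You have simply spelled out the solvability step via Lemma \ref{2402061816}, which is exactly what the paper leaves implicit.
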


The next one is a Hopf lemma result for $L^1$-weak supersolutions.
\begin{lemma}\label{2306012122}
Assume that $f\in L^1(\Omega; \delta^s dx)$ be nonnegative and $f\ne 0$. Then there exists $C> 0$ depending only on $\Omega$, $f$ and $s$ such that for any $u\in L^1(\Omega)$ satisfying $(-\Delta)^s u\ge f$, $u>C\delta^s$ a.e.~in $\Omega$.
\end{lemma}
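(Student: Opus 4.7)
The strategy is to reduce the problem via the comparison principle to the case of a bounded, compactly supported source, then obtain the lower bound $w\ge C\delta^s$ either through the sharp two-sided estimate for the Green function of $(-\Delta)^s$ on a smooth bounded domain, or equivalently through a chained explicit torsion-type barrier.

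\textbf{Reduction.} Since $0\le f\in L^1(\Omega;\delta^s dx)$ and $f\not\equiv 0$, fix $c_0>0$ and a compact set $K\subset\Omega$ of positive measure such that $c_0\mathbbm{1}_K\le f$. Let $w$ be the unique $X_0^s(\Omega)$-solution of $(-\Delta)^s w=c_0\mathbbm{1}_K$ with zero exterior datum supplied by Lemma~\ref{2402061816}; then $w\in C^s(\overline\Omega)\cap C_\delta^\alpha(\overline\Omega)$, and integration by parts against $\varphi\in\mathcal{T}_s(\Omega)$ shows $w$ is also an $L^1$-weak solution. From $(-\Delta)^s u\ge f\ge c_0\mathbbm{1}_K=(-\Delta)^s w$ in the $L^1$-weak sense and Lemma~\ref{2306081933}, we obtain $u\ge w$ a.e.\ in $\Omega$, so the claim reduces to $w\ge C\delta^s$ in $\Omega$.

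\textbf{Lower bound via the Green function.} Use the Green representation $w(x)=c_0\int_K G_\Omega(x,y)\,dy$ together with the sharp two-sided estimate
\[
G_\Omega(x,y)\asymp |x-y|^{2s-N}\min\!\bigl(1,\;\delta(x)^s\delta(y)^s|x-y|^{-2s}\bigr),\qquad x,y\in\Omega,\ x\neq y,
\]
known for $(-\Delta)^s$ on smooth bounded domains. For $y\in K$ one has $\delta(y)\ge d_K>0$ and $|x-y|\le \mathrm{diam}(\Omega)$; splitting cases according to whether $\delta(x)^s\delta(y)^s\le|x-y|^{2s}$ yields $G_\Omega(x,y)\ge C_K\delta(x)^s$ uniformly in $y\in K$. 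Integrating, $w(x)\ge c_0 C_K|K|\delta(x)^s$, and hence $u\ge w\ge C\delta^s$ a.e., with $C$ depending only on $\Omega$, $s$, $f$.

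\textbf{Main obstacle and alternative.} The crucial input is the Green-function estimate above. If one prefers to avoid it (and stay within the paper's framework), the same conclusion can be obtained by chaining explicit torsion barriers of the form $\phi_j(x)=\kappa_j(R_j^2-|x-y_j|^2)_+^s$, whose fractional Laplacian is a positive constant inside the ball $B_{R_j}(y_j)$ and nonpositive pointwise outside (since for $x\notin B_{R_j}(y_j)$ the defining singular integral reduces to $-C_{N,s}\int \phi_j(y)|x-y|^{-N-2s}dy\le 0$), so Lemma~\ref{2306081933} yields $w\ge\phi_j$ after a Lebesgue-point refinement of $K$. Chaining such barriers through overlapping interior balls propagates interior positivity of $w$ onto a neighborhood of each tangent interior ball $B^*\subset\Omega$ at $x_0\in\partial\Omega$ provided by the uniform interior ball condition of the $C^{1,1}$ domain, and a final comparison of $w$ inside $B^*$ with the $s$-harmonic extension of its interior lower bound—computed via the explicit Riesz Poisson kernel of the ball—gives the $\delta^s$ decay at $x_0$ with constants uniform in $x_0$. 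Compactness of $\partial\Omega$ and continuity of $w/\delta^s$ on $\overline\Omega$ then assemble the local bounds into the global one. Either route hinges on the same deep point, namely the nondegenerate $\delta^s$-rate at $\partial\Omega$—the fractional Hopf lemma at the level of $L^\infty$ data—which is the main technical obstacle.
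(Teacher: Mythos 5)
Your proposal is correct and follows the same skeleton as the paper's proof: truncate/minorize the source to a bounded one, solve that problem in $X_0^s(\Omega)$ via Lemma~\ref{2402061816}, and transfer the lower bound to $u$ by the comparison principle of Lemma~\ref{2306081933}. The only real divergence is in the final step, the $\delta^s$ lower bound for the solution with bounded, compactly supported data: the paper simply cites the known maximum principle and fractional Hopf lemma for bounded sources (Caffarelli--Roquejoffre--Sire, Ros-Oton--Serra, Del Pezzo--Quaas), whereas you derive it yourself from the sharp two-sided Green function estimate $G_\Omega(x,y)\asymp |x-y|^{2s-N}\min\bigl(1,\delta(x)^s\delta(y)^s|x-y|^{-2s}\bigr)$ on $C^{1,1}$ domains (Chen--Song, Kulczycki). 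Your case-splitting computation is correct (using $N>2s$ in the near-diagonal case), so this buys a short, explicit proof of the cited Hopf-type bound at the cost of invoking a comparably deep piece of potential theory; the barrier-chaining alternative you sketch is also viable but, as you note, rests on the same boundary-rate input. Either way the argument is sound.
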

\begin{proof}
Fix $f_k:=\min\{k, f\}$ with $k$ large enough such that $f_k\ne 0$. Let $u_k \subset X_0^s(\Omega)$ solve (ensured by Lemma \ref{2402061816})
\begin{equation}
\label{2304142022}
(-\Delta)^s u_k = f_k \;\;\text{in}~\Omega,\quad u_k =0 \;\;\text{in}~\mathbb{R}^N\setminus\Omega.
\end{equation}
As $f_k \in L^\infty(\Omega)$, by maximum principle (see \cite[Proposition 2.2.8]{S2007}, \cite[Theorem A.1]{BF2014}) and Hopf's lemma (see \cite[Proposition 2.7]{CRS2010}, \cite[Lemma 3.2]{RS2014} and \cite[Theorem 1.5]{DQ2017}) for bounded source, there holds $u_k>C\delta^s$ for $C>0$. Using previous lemma, we get $u\ge u_k$, and finish the proof.
\end{proof}

Inspired by \cite{BCM1996}, we show the following existence and uniqueness result for $L^1$-weak solution.
\begin{proposition}\label{2304232341}
Given any $f\in L^1(\Omega; \delta^s dx) $, there exists a unique $L^1$-weak solution $u$ to \eqref{2306011244}. Moreover, there exists $C=C(s, \Omega)>0$ such that
\begin{equation}\label{2304142025}
|u|_1\le C\|f\|_{L^1(\Omega; \delta^s dx) }.
\end{equation}
\end{proposition}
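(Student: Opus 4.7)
The strategy, following \cite{BCM1996}, is entirely dual: Lemma \ref{2402061816} will be applied to the \emph{test equation}, producing a pointwise bound $|\varphi| \le C\delta^s$ whenever $(-\Delta)^s\varphi$ is bounded. Concretely, for each $g \in C_c^\infty(\Omega)$, Lemma \ref{2402061816} yields a unique $\varphi \in X_0^s(\Omega)$ solving $(-\Delta)^s\varphi = g$ in $\Omega$ and vanishing outside; by construction $\varphi \in \mathcal{T}_s(\Omega)$, and since $\varphi/\delta^s$ is continuous on $\overline{\Omega}$,
\[
\left|\varphi/\delta^s\right|_\infty \le C(s,\Omega)\,|g|_\infty.
\]
This "regularity-in-the-test-function" estimate is the engine for everything below.

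For uniqueness, if $u \in L^1(\Omega)$ satisfies \eqref{L1weak} with $f=0$, then $\int_\Omega u\,(-\Delta)^s\varphi\,dx = 0$ for every $\varphi \in \mathcal{T}_s(\Omega)$; applied to $\varphi$ constructed from an arbitrary $g \in C_c^\infty(\Omega)$, this gives $\int_\Omega u\,g\,dx = 0$, hence $u \equiv 0$ a.e. For existence I would first treat bounded data: if $f \in L^\infty(\Omega)$, Lemma \ref{2402061816} gives $u \in X_0^s(\Omega) \cap C^s(\overline{\Omega})$ with $(-\Delta)^s u = f$, and for every $g \in C_c^\infty(\Omega)$ with $|g|_\infty \le 1$, testing against the $\varphi$ above yields
\[
\left|\int_\Omega u\,g\,dx\right| = \left|\int_\Omega f\,\varphi\,dx\right| \le \left|\varphi/\delta^s\right|_\infty \int_\Omega |f|\,\delta^s\,dx \le C\,\|f\|_{L^1(\Omega;\delta^s dx)}.
\]
Taking the supremum over such $g$ (dense in the unit ball of $L^\infty$ for, say, $L^2$-convergence) produces the estimate \eqref{2304142025} for bounded data.

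The general case is by approximation. Set $f_k := T_k(f)$ (truncation at level $k$), so $f_k \in L^\infty(\Omega)$ and $f_k \to f$ in $L^1(\Omega;\delta^s dx)$ by dominated convergence. Let $u_k \in X_0^s(\Omega)$ solve $(-\Delta)^s u_k = f_k$. Since $u_k - u_j$ solves the equation with source $f_k - f_j \in L^\infty$, the bound from the previous step gives $|u_k - u_j|_1 \le C\|f_k - f_j\|_{L^1(\Omega;\delta^s dx)}$, so $(u_k)$ is Cauchy in $L^1(\Omega)$ with limit $u$ satisfying \eqref{2304142025}. To pass to the limit in $\int_\Omega u_k(-\Delta)^s\varphi\,dx = \int_\Omega f_k\varphi\,dx$ for fixed $\varphi \in \mathcal{T}_s(\Omega)$: the left side converges by $L^1$-convergence of $u_k$ (since $(-\Delta)^s\varphi \in C_c^\infty(\Omega)$ is bounded), and the right side by dominated convergence using $|\varphi| \le C_\varphi\,\delta^s$.

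The only subtle point I expect is the density step used to upgrade the dual estimate from $g \in C_c^\infty(\Omega)$ to general $\psi \in L^\infty(\Omega)$ with $|\psi|_\infty \le 1$ (needed to write $|u|_1$ as a supremum), because one must verify that solutions of $(-\Delta)^s\varphi_n = \psi_n$ with $|\psi_n|_\infty \le 1$ still satisfy $|\varphi_n| \le C\delta^s$ uniformly. This is not formally stated in Lemma \ref{2402061816}, but follows from it by approximating $\psi_n$ in $L^\infty$-weak-$*$ by smooth functions of equal sup norm and using the comparison principle of Lemma \ref{2306081933} to preserve the pointwise bound.
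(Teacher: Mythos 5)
Your proposal is correct, and it reaches the same destination as the paper by a slightly different road. The paper first reduces to $f\ge 0$ (writing $f=f_+-f_-$), takes the single barrier $\xi$ solving $(-\Delta)^s\xi=1$ with $\xi\le C\delta^s$ from Lemma \ref{2402061816}, and obtains $|u_k|_1=\int_\Omega f_k\xi\,dx\le C\|f\|_{L^1(\Omega;\delta^s dx)}$ by exploiting the positivity of $u_k$; the Cauchy estimate then uses the monotonicity of the truncations. You instead run the full duality argument, testing against the solution $\varphi$ of $(-\Delta)^s\varphi=g$ for every $g\in C_c^\infty(\Omega)$ with $|g|_\infty\le 1$. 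Both arguments hinge on exactly the same engine, namely the Ros-Oton--Serra bound $|\varphi/\delta^s|_\infty\le C|(-\Delta)^s\varphi|_\infty$; your version buys a sign-free a priori estimate (no decomposition $f=f_+-f_-$ and no monotonicity bookkeeping for the Cauchy step), at the cost of quantifying over all test data rather than one fixed barrier. One remark: the ``subtle point'' you flag at the end is not actually an issue. The identity $|u|_1=\sup\{\int_\Omega ug\,dx: g\in C_c^\infty(\Omega),\ |g|_\infty\le 1\}$ holds for every $u\in L^1(\Omega)$ by approximating $\operatorname{sgn}(u)$ a.e.\ by such $g$ and applying dominated convergence; you never need to solve the test equation for general $\psi\in L^\infty$, so the weak-$*$ approximation and the appeal to Lemma \ref{2306081933} in your last paragraph can simply be deleted.
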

\begin{proof}
Assume first $f\ge 0$, otherwise we decompose $f = f_+ - f_-$. Let $\xi\in X_0^s(\Omega)$ be the solution of
\begin{equation}\label{2306151956}
(-\Delta)^s \xi = 1 \;\;\text{in}~\Omega,\quad
\xi =0 \;\;\text{in}~\mathbb{R}^N\setminus\Omega.
\end{equation}
For any $k\in \N$, set $f_k:=\min\{k, f\}$, and $u_k\geq 0$ the solution of \eqref{2304142022}.
As $u_k \in X_0^s(\Omega)$, we can use it as test function to \eqref{2306151956}. Applying the estimates in Lemma \ref{2402061816} for $\xi$,
\[
|u_k|_1 = \langle (-\Delta)^s \xi, u_k\rangle = \langle (-\Delta)^s u_k, \xi\rangle = \int_\O f_k\xi dx \leq C\int_\O f_k\delta^sdx \leq C\|f\|_{L^1(\Omega; \delta^s dx)}.
\]
Similarly, considering $u_k - u_l$, we have
$|u_k-u_l|_{L^1} \leq C\|f_k-f_l\|_{L^1(\Omega; \delta^s dx)}$, so $\{u_k\}$ is a Cauchy sequence, hence a convergent sequence in $L^1(\O)$. We get easily a $L^1$-weak solution to \eqref{2306011244} by taking $u$, the limit of $u_k$ in $L^1(\Omega)$. The uniqueness is ensured by the comparison principle.
\end{proof}

\begin{remark}\label{2402062112}
If $\Omega = B_R$ is a ball and $f\in L^1(B_R, \delta^s dx)$ is radial, then there exists a unique radially symmetric solution to \eqref{2306011244}.
\end{remark}

Next, we state the regularity results for the unique $L^1$-weak solution when $f$ admits further integrability, here we summarize the results in \cite{LPPS2015} and \cite[Lemma 2.5]{BWZ2017}.
\begin{proposition}\label{2211162013}
Assume that $s\in (0, 1)$, $N>2s$, $\Omega$ is a smooth bounded domain. For any $f\in L^r(\Omega)$ with $r\ge 1$, the unique $L^1$-weak solution $u$ to \eqref{2306011244} satisfies
\begin{itemize}
\item[\rm (i)] If $1 \le r < \frac{N}{2s}$, $u\in L^{\gamma}(\Omega)$ and $|u|_{\gamma}\le C(\Omega, r, s)|f|_r$,
where
\[
\gamma={\frac{Nr}{N-2rs}}\; {\rm if }\; r>1, \quad 1\le \gamma<\frac{N}{N-2s}\;{\rm if }\; r=1;
\]
\item[\rm (ii)] If $r \ge \frac{2N}{N+2s}$, we have $u\in X_0^s(\Omega)$ and $\|u\|_{X_0^s(\Omega)}\le C(\Omega, r, s)|f|_r$;
\item[\rm (iii)] If $r>\frac{N}{2s}$, there holds $u\in L^{\infty}(\Omega)$ and $|u|_{\infty}\le C(\Omega, r, s)|f|_{r}$.
\item[\rm (iv)] If $r=\frac{N}{2s}$, there is a constant $\alpha(\O, f, s) >0$ such that
\[
\int_{\Omega} e^{\alpha |u|} dx<\infty.
\]
In particular, $u\in L^\gamma(\Omega)$ for all $1\le \gamma<\infty$.
\end{itemize}
\end{proposition}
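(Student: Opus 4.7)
The overall plan is a truncate-and-pass-to-limit strategy. Set $f_k := \mathrm{sign}(f)\min\{k, |f|\} \in L^\infty(\Omega)$; by Lemma \ref{2402061816} each $f_k$ produces a unique $u_k \in X_0^s(\Omega) \cap C^s(\overline{\Omega})$ with $(-\Delta)^s u_k = f_k$, and Proposition \ref{2304232341} together with the $L^1$-stability $|u_k - u_l|_1 \le C|f_k - f_l|_{L^1(\Omega; \delta^s dx)}$ guarantees that $u_k \to u$ in $L^1(\Omega)$, where $u$ is the unique $L^1$-weak solution of \eqref{2306011244}. It therefore suffices to establish each of the four bounds for $u_k$ uniformly in $k$, then pass to the limit by weak compactness, lower semicontinuity of norms, or Fatou's lemma.

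For item (ii), note that $r \ge 2N/(N+2s)$ is equivalent to $r' \le 2N/(N-2s)$; using $u_k \in X_0^s(\Omega)$ as a test function for its own equation and combining H\"older's inequality with the Sobolev embedding $X_0^s(\Omega) \hookrightarrow L^{2N/(N-2s)}(\Omega)$ yields
\[
\|u_k\|_{X_0^s(\Omega)}^2 = \int_\Omega f_k u_k\,dx \le |f|_r\,|u_k|_{r'} \le C|f|_r\,\|u_k\|_{X_0^s(\Omega)}.
\]
For (iii), I would combine H\"older's inequality with the pointwise Green-function bound $G_\Omega(x, y) \le C|x-y|^{2s-N}$ to get
\[
|u_k(x)| \le C|f|_r\Bigl(\int_\Omega |x-y|^{(2s-N)r'}\,dy\Bigr)^{1/r'},
\]
which is uniformly bounded in $x$ precisely when $r > N/(2s)$. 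For (i) with $1 < r < N/(2s)$, I would use a duality argument: given $\phi \in L^{\gamma'}(\Omega)$, let $\psi$ solve the dual equation $(-\Delta)^s\psi = \phi$; then $\int_\Omega u_k\phi\,dx = \int_\Omega f_k\psi\,dx$, and the Hardy-Littlewood-Sobolev inequality applied to the representation $\psi(x) = \int_\Omega G_\Omega(x, y)\phi(y)\,dy$ gives $|\psi|_{r'} \le C|\phi|_{\gamma'}$, hence $|u_k|_\gamma \le C|f|_r$. The case $r = 1$ is borderline for HLS and is handled instead by the weak-type $(1, N/(N-2s))$ bound for the Green-function operator, followed by Marcinkiewicz interpolation to reach any $\gamma \in [1, N/(N-2s))$.

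The borderline case (iv) at $r = N/(2s)$ is the main obstacle: both the H\"older computation of (iii) and the HLS duality of (i) just fail. The natural route is to derive exponential integrability directly by expanding $\int_\Omega \exp(\alpha|u_k|)\,dx$ as a power series and controlling each term via the estimate in (i) with $\gamma \to \infty$, using the sharp growth $|u_k|_\gamma \le C\gamma^{(N-2s)/N}|f|_{N/(2s)}$ obtained by tracking the constants in the Riesz-potential inequality; choosing $\alpha$ sufficiently small then forces the series to converge, giving the stated Trudinger-Moser-type estimate. Since the proposition is advertised as a summary of \cite{LPPS2015} and \cite[Lemma 2.5]{BWZ2017}, in practice one would invoke those references for this borderline step rather than reproduce the computation.
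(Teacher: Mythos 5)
The paper does not actually prove Proposition \ref{2211162013}: it is explicitly introduced as a summary of results from \cite{LPPS2015} and \cite[Lemma 2.5]{BWZ2017}, with no argument given. Your proposal is therefore a self-contained reconstruction rather than a parallel proof, and it follows the standard route that those references use: truncate $f$ to $f_k\in L^\infty$, solve with the regularity of Lemma \ref{2402061816}, obtain uniform bounds for $u_k$, and pass to the limit in the $L^1$-weak solution by lower semicontinuity or Fatou. Items (ii) and (iii) are handled exactly as one should: energy testing plus the Sobolev embedding for (ii), and the Green-function bound $G_\Omega(x,y)\le C|x-y|^{2s-N}$ combined with H\"older for (iii). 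The exponent check for (ii) ($r'\le \frac{2N}{N-2s}\iff r\ge\frac{2N}{N+2s}$) and for (iii) ($(2s-N)r'>-N\iff r>\frac{N}{2s}$) are both correct, and for (i) the HLS exponent $\gamma=\frac{Nr}{N-2rs}$ matches the statement, with the $r=1$ endpoint correctly recognised as weak-type plus interpolation.

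Two small caveats are worth flagging. First, your argument for (i) and (iii) implicitly uses the Green-function representation $u_k(x)=\int_\Omega G_\Omega(x,y)f_k(y)\,dy$ for the truncated solutions; this is valid for $f_k\in L^\infty$ and $u_k\in X_0^s(\Omega)\cap C^s(\overline\Omega)$ on a $C^{1,1}$ domain, but it is an ingredient that should be stated as such, since the paper's notion of solution is distributional against $\mathcal T_s(\Omega)$ and does not a priori come with a kernel. Second, in (iv) you invoke the growth $|u_k|_\gamma\le C\gamma^{(N-2s)/N}|f|_{N/(2s)}$; this is the rate one extracts from an Adams-type inequality and is consistent with it, but it is not an immediate consequence of the HLS constant as a function of the exponents, and proving it from scratch is essentially proving Adams's theorem. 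You correctly note that one would in practice cite \cite{LPPS2015} and \cite{BWZ2017} here, which is exactly what the paper itself does, so that elision is reasonable. Overall the proposal is sound, and the truncation-and-limit framework is compatible with the $L^1$-weak solution theory set up in Proposition \ref{2304232341}.
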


\subsection{Embedding and density}
Here we expose some basic properties of $\mathcal{W}^{2s, r}(\Omega)$, including (compact) embedding results and density property, which will be important for constructing the variational framework.

\begin{proposition}\label{2302102144}
Assume that $s\in (0,1)$, $N>2s$, $r\ge 1$, $\Omega$ is a smooth bounded domain. Then $\mathcal{W}^{2s, r}(\Omega)$ is a Banach space and $\mathcal{T}_s(\Omega)$ is dense in $\mathcal{W}^{2s, r}(\Omega)$. On the other hand, we have continuous embedding $\mathcal{W}^{2s, r}(\Omega)\subset L^{\gamma}(\Omega)$ for
\begin{equation}\label{2302110107}
\begin{cases}
\begin{aligned}
&1\le \gamma<\frac{N}{N-2s}, \;\; &&\text{if}~ r=1;\\
&1\le \gamma\le\frac{Nr}{N-2sr}, \;\; &&\text{if}~1 < r < \frac{N}{2s};\\
&1\le \gamma<\infty,  && \text{if}~2sr = N;\\
& \gamma = \infty,  && \text{if}~ 2sr > N.\\
\end{aligned}
\end{cases}
\end{equation}
Moreover, the above embeddings are compact provided
\begin{equation}\label{2302110109}
1\le \gamma<\frac{Nr}{N-2sr}\;\;  \text{if}~N>2sr;\quad \mbox{or} \;\; 1\le \gamma<\infty, \;\; \text{if}~N\le 2sr.
\end{equation}
\end{proposition}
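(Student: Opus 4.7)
The plan is to dispatch the four claims in order: Banach-space structure, density of $\mathcal{T}_s(\Omega)$, continuous embedding, and compact embedding. Only the last one---and in fact only its $r=1$ sub-case---is genuinely non-routine.

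\textbf{Norm and completeness.} I first check that $\|u\|_{\mathcal{W}^{2s,r}}:=|(-\Delta)^s u|_r$ is well defined, i.e.\ that the $L^r$-datum $f$ in \eqref{L1weak} is uniquely determined by $u$. If $u\in L^1(\Omega)$ admits $f_1,f_2\in L^r(\Omega)$ both satisfying \eqref{L1weak}, then $\int_\Omega (f_1-f_2)\varphi\,dx=0$ for every $\varphi\in\mathcal{T}_s(\Omega)$. For each $\psi\in C_c^\infty(\Omega)$, Lemma \ref{2402061816} produces $\varphi_\psi\in X_0^s(\Omega)$ solving $(-\Delta)^s\varphi_\psi=\psi$, so $\varphi_\psi\in\mathcal{T}_s(\Omega)$; representing $\varphi_\psi$ through the symmetric Dirichlet Green function and applying Fubini converts the identity into $\int_\Omega[(-\Delta)^{-s}(f_1-f_2)]\psi\,dx=0$ for every $\psi\in C_c^\infty(\Omega)$, hence $f_1=f_2$ a.e. Completeness then falls out of Proposition \ref{2304232341}: a Cauchy $\{u_n\}$ gives $f_n:=(-\Delta)^s u_n$ Cauchy in $L^r(\Omega)\subset L^1(\Omega;\delta^s dx)$ converging to some $f$; the corresponding unique $L^1$-weak solution $u$ lies in $\mathcal{W}^{2s,r}(\Omega)$ and $\|u_n-u\|_{\mathcal{W}^{2s,r}}=|f_n-f|_r\to 0$.

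\textbf{Density and continuous embedding.} Density of $\mathcal{T}_s(\Omega)$ is straightforward: given $u$ with $(-\Delta)^s u=f\in L^r$, I approximate $f$ by $f_n\in C_c^\infty(\Omega)$ in $L^r$, take $u_n\in X_0^s(\Omega)$ solving $(-\Delta)^s u_n=f_n$ (so $u_n\in\mathcal{T}_s(\Omega)$), and read off $\|u-u_n\|_{\mathcal{W}^{2s,r}}=|f-f_n|_r\to 0$. The continuous embeddings \eqref{2302110107} are a direct translation of Proposition \ref{2211162013} applied to $f=(-\Delta)^s u$ in each of the four ranges of $r$; the explicit $L^\gamma$-bound in the borderline case $r=N/(2s)$ is recovered from the subcritical estimate by Hölder on the bounded domain $\Omega$.

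\textbf{Compact embedding, $r>1$.} Let $\{u_n\}$ be bounded in $\mathcal{W}^{2s,r}$ and $f_n:=(-\Delta)^s u_n$ bounded in $L^r(\Omega)$. I split $f_n=f_n^{(k)}+r_n^{(k)}$ with $f_n^{(k)}:=f_n\mathbf{1}_{|f_n|\le k}$, and let $(-\Delta)^s u_n^{(k)}=f_n^{(k)}$, $(-\Delta)^s w_n^{(k)}=r_n^{(k)}$, so that $u_n=u_n^{(k)}+w_n^{(k)}$. For each fixed $k$, $f_n^{(k)}$ is bounded in $L^\infty(\Omega)$ uniformly in $n$, so Lemma \ref{2402061816} together with Proposition \ref{2211162013}(ii)(iii) give $u_n^{(k)}$ bounded in $X_0^s(\Omega)\cap L^\infty(\Omega)$; the Rellich property \cite{Di12} combined with this $L^\infty$-bound and interpolation yield, along a subsequence, $u_n^{(k)}\to u^{(k)}$ strongly in every $L^\gamma(\Omega)$, $\gamma<\infty$. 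For the residual, Chebyshev–Hölder gives $|r_n^{(k)}|_p\le|f_n|_r^{r/p}k^{(p-r)/p}\to 0$ uniformly in $n$ as $k\to\infty$ for any $p\in(1,r)$; for any prescribed $\gamma<Nr/(N-2sr)$ (or any $\gamma<\infty$ when $2sr\ge N$) I pick $p\in(1,r)$ close enough to $r$ so that $\gamma<Np/(N-2sp)$, apply the continuous embedding already proved to obtain $|w_n^{(k)}|_\gamma\le C|r_n^{(k)}|_p$, and a standard diagonal extraction produces a subsequence of $\{u_n\}$ that is Cauchy in $L^\gamma(\Omega)$.

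\textbf{Compact embedding, $r=1$.} This is the delicate case, because an $L^1$-bounded sequence need not be equi-integrable, so the truncation scheme above breaks. I will instead prove compactness of the solution operator $T:L^1(\Omega)\to L^\gamma(\Omega)$ directly, as the operator-norm limit of compact operators. For $\varepsilon>0$ truncate the Dirichlet Green kernel to $G_\Omega^\varepsilon(x,y):=\min\{G_\Omega(x,y),1/\varepsilon\}$ and set $T_\varepsilon f(x):=\int_\Omega G_\Omega^\varepsilon(x,y)f(y)\,dy$. Since $G_\Omega^\varepsilon$ is bounded and continuous on $\overline{\Omega}\times\overline{\Omega}$, Arzel\`a–Ascoli makes $T_\varepsilon:L^1(\Omega)\to C(\overline{\Omega})\hookrightarrow L^\gamma(\Omega)$ compact. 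For the remainder, $T-T_\varepsilon$ has kernel supported where $G_\Omega(x,y)>1/\varepsilon$, a shrinking neighbourhood of the diagonal; the pointwise bound $G_\Omega(x,y)\le c|x-y|^{-(N-2s)}$ and Young's inequality $L^1\ast L^\gamma\hookrightarrow L^\gamma$ give $\|T-T_\varepsilon\|_{L^1\to L^\gamma}\le C\,|G_\Omega-G_\Omega^\varepsilon|_\gamma$, which is finite exactly when $\gamma<N/(N-2s)$ and tends to $0$ as $\varepsilon\to 0$ by dominated convergence. The explicit bookkeeping with the Green kernel in this last step is the place where I expect the main technical effort to be spent.
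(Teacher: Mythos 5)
Your proposal is correct and, for most of the statement, runs along the same lines as the paper: completeness via Proposition \ref{2304232341} applied to the Cauchy sequence of data $f_n=(-\Delta)^su_n$, density by approximating $f$ in $L^r$ by $C_c^\infty(\Omega)$ functions, continuous embeddings read off from Proposition \ref{2211162013}, and compactness for $r>1$ by truncating the datum $f_n$ at height $k$ and treating the bounded part through the $X_0^s(\Omega)$ (Rellich) compactness. The two genuine differences are these. First, for the residual term when $r>1$ you push it into $L^p$ with $p\in(1,r)$ close to $r$ and land directly in the target $L^\gamma$, whereas the paper sends the residual to $L^1(\Omega;\delta^s dx)$-smallness (hence $L^1(\Omega)$-smallness of $w_j$ via \eqref{2304142025}) and then upgrades to $L^\gamma$ by the interpolation inequality \eqref{2302111630} between $L^1$ and $L^{Nr/(N-2sr)}$; both are valid, and yours avoids the interpolation step at the cost of invoking the continuous embedding at a second exponent. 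Second, and more substantively, you treat the $r=1$ compactness, which the truncation scheme cannot reach (as you correctly observe, $A^{1/r-1}$ does not decay when $r=1$) and which the paper simply skips with ``without loss of generality we only consider $\dots$ $1<r$''. Your kernel-truncation argument ($T_\varepsilon$ compact from $L^1$ into $C(\overline\Omega)$ by Arzel\`a--Ascoli, $\|T-T_\varepsilon\|_{L^1\to L^\gamma}\to 0$ by Minkowski's integral inequality and the bound $G_\Omega(x,y)\le c|x-y|^{2s-N}$, finite in $L^\gamma$ exactly for $\gamma<N/(N-2s)$) is sound, but note that it imports facts not established in the paper: existence, symmetry, off-diagonal continuity and the pointwise upper bound for the Dirichlet Green function of $(-\Delta)^s$ on a $C^{1,1}$ domain, plus the identification of the $L^1$-weak solution with the Green potential. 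These are standard but should be cited.

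One caveat on your well-definedness step. Reducing the identity $\int_\Omega(f_1-f_2)\varphi\,dx=0$ for all $\varphi\in\mathcal{T}_s(\Omega)$ to $(-\Delta)^{-s}(f_1-f_2)=0$ a.e.\ is fine by Fubini and symmetry of $G_\Omega$, but the concluding ``hence $f_1=f_2$'' is not immediate: injectivity of the Green operator on $L^r(\Omega)$ is essentially equivalent to the separation property you are trying to prove, so it needs its own argument (e.g.\ pairing $(-\Delta)^{-s}(f_1-f_2)=0$ against $\lambda_j\varphi_j$ to get $\int_\Omega(f_1-f_2)\varphi_j\,dx=0$ for every eigenfunction $\varphi_j$, and then using density of the span of the eigenfunctions). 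The paper does not address this point at all, so raising it is to your credit, but as written the step is circularly close to the claim itself.
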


\begin{proof}
The density of $\mathcal{T}_s(\Omega)$ in $\mathcal{W}^{2s, r}(\Omega)$ is obvious by definition and Lemma \ref{2402061816}. 

The completeness of $\mathcal{W}^{2s, r}(\Omega)$ is a simple fact. Let $\{u_j\}\subset \mathcal{W}^{2s, r}(\Omega)$ be a Cauchy sequence, by definition, $\{(-\Delta)^s u_j\}$ is a Cauchy sequence in $L^r({\Omega})$, hence converges to $v\in L^r({\Omega})$. Using Proposition \ref{2304232341}, there is a unique $L^1$-weak solution $u$ satisfying \eqref{2306011244} with $f = v$. Clearly $u \in \mathcal{W}^{2s, r}(\Omega)$, and $u_j$ tends to $u$ in $\mathcal{W}^{2s, r}(\Omega)$.

On the other hand, in virtue of Proposition \ref{2211162013}, the embedding of $\mathcal{W}^{2s, r}(\Omega)\subset L^\gamma(\Omega)$ is continuous if \eqref{2302110107} holds true, we shall only prove that the embedding is compact provided \eqref{2302110109}.

Without loss of generality, we only consider the case $N>2sr$, $1 < r$, $1 \le\gamma<\frac{Nr}{N-2sr}$. If $r\ge \frac{2N}{N+2s}$, then $\mathcal{W}^{2s, r}(\Omega)\subset X_0^s(\Omega)$ is continuous by Proposition \ref{2211162013}. By H\"older inequality, fix any $1 < \gamma<\frac{Nr}{N-2sr}$,
\begin{equation}\label{2302111630}
|u|_{{\gamma}}\le |u|^{1-\theta}_{{1}}|u|^{\theta}_{{\frac{Nr}{N-2sr}}} \quad \mbox{with } \theta=\frac{Nr-\gamma (N-2sr)}{Nr-(N-2sr)}.
\end{equation}
Since $\mathcal{W}^{2s, r}(\Omega)$ is continuously embedded in $L^{\frac{Nr}{N-2sr}}(\Omega)$ and compactly embedded  in $L^1(\Omega)$, $\mathcal{W}^{2s, r}(\Omega)$ is compactly embedded in $L^{\gamma}(\Omega)$.

Suppose now $1<r< \frac{2N}{N+2s}$, we first claim that the embedding $\mathcal{W}^{2s, r}(\Omega)\subset L^{1}(\Omega)$ is compact. Indeed, let $\{u_j\}$ be a bounded sequence in $\mathcal{W}^{2s, r}(\Omega)$, then $\{u_j\}$ is bounded in $L^{\frac{Nr}{N-2sr}}(\Omega)$ and the sequence $f_j = (-\Delta)^{s} u_j$ is bounded in $L^r(\Omega)$. Let $A > 0$ to be chosen later, we denote $g_j = f_j\chi_{|f_j(x)|\le A}$ and $h_j = f_j\chi_{|f_j(x)|> A}$. By H\"older and Chebychev inequalities,
\begin{align*}
\|h_j\|_{L^1(\Omega; \delta^s dx)} \leq C|h_j|_1\leq C|f_j|_r\Big(\int_{|f_j(x)|> A} dx\Big)^{1-\frac{1}{r}} \le C'A^{\frac{1}{r}-1}.
\end{align*}
Let $\{v_j\}$ and $\{w_j\}$ be respectively $L^1$-weak solutions of
\begin{equation}\label{2306010025}
\left\{
\begin{split}
(-\Delta)^{s} v_j &=g_j &&\text{in}~\Omega,\\
v_j&=0 &&\text{in} ~ \mathbb{R}^N\setminus\Omega,\\
\end{split}
\right. \quad \left\{
\begin{split}
(-\Delta)^{s} w_j&=h_j &&\text{in}~\Omega,\\
w_j&=0 &&\text{in} ~ \mathbb{R}^N\setminus\Omega.\\
\end{split}
\right.
\end{equation}
Clearly $u_j=v_j+w_j$. Given any $\varepsilon>0$, we fix $A$ large enough such that 
\[
|w_j|_1\le C\|h_j\|_{L^1(\Omega; \delta^s dx)} \le C'A^{\frac{1}{r}-1} \le \varepsilon.
\]
Moreover, $\{v_j\}$ is bounded in $X_0^s(\Omega)$ hence relatively compact in $L^1(\Omega)$. Therefore $\{u_j\}$ is precompact, or equivalently relatively compact in $L^1(\Omega)$, which means $\mathcal{W}^{2s, r}(\Omega)$ is compactly embedded in $L^{1}(\Omega)$. Applying again the interpolation inequality \eqref{2302111630}, $\{u_j\}$ is compact in $L^\gamma(\Omega)$ for $1\le\gamma<\frac{Nr}{N-2sr}$. So we are done.
\end{proof}

\subsection{A linking theorem} The Linking theorem \ref{2210091640} below was given by Felmer \cite[Theorem 3.1]{Felmer1993}, it is useful for the proof of Theorem \ref{220516}. For the sake of completeness, we recall also the definition of Palais-Smale condition.
\begin{definition}
For a Banach space $X$,  ${\mathcal I}\in C^1(X, \mathbb{R})$ is said satisfying the Palais-Smale condition at level $c\in \mathbb{R}$ {\rm (}for short $(PS)_c${\rm )} if any sequence $\{u_j\}\subset X$ satisfying
\begin{equation*}\label{eq2.4}
{\mathcal I}(u_j)\rightarrow c \quad \mbox{and}\quad {\mathcal I}'(u_j) \rightarrow 0 \; \mbox{ in }\;  X^*
\end{equation*}
admits a convergent subsequence in $X$. We say that ${\mathcal I}$ satisfies the Palais-Smale condition {\rm (}$(PS)$ for short{\rm )} if $(PS)_c$ is satisfied for all $c\in \R$.
\end{definition}

\begin{theorem}\label{2210091640}
Let $(H, \langle \cdot, \cdot \rangle)$ be a Hilbert space such that $H=H_1\oplus H_2$. Suppose that $\mathcal{I}\in C^1(H)$ satisfies $(PS)$ condition and $\mathcal{I}(z)=\frac12 \langle Lz, z\rangle-\mathcal{J}(z)$, where
\begin{itemize}
\item[\rm (i)] $L : H\to H$ is a bounded, self-adjoint linear operator and $L(H_1) \subset H_1$; $\mathcal{J}' : H\to H^*$ is compact;
\item[\rm (ii)] There exist two linear, bounded, invertible operators $B_1$, $B_2 : H\to H$ such that $\widehat{B}_{\tau}=P_2B_1^{-1}e^{\tau L}B_2: H_2\to H_2$  is invertible for any $\tau\ge 0$. Here $P_2: H\to H_2$ is the projection along $H_1$ and $e^{\tau L}=\sum_{n\in \N}\frac{(\tau L)^n}{n!}$;
\item[\rm (iii)] Let $e_1\in H_1$ with $\|e_1\|=1$. Let $\rho>0$, $R_1>\frac{\rho}{\|B_1^{-1}B_2 e_1\|}$, $R_2>\rho$ and define
$$
S =\{B_1z_1 : z_1\in H_1, \|z_1\|=\rho\},\quad
Q =\{B_2(te_1+z_2) : 0\le t\le R_1, z_2\in H_2, \|z_2\|\le R_2\}.
$$
Suppose that $\mathcal{I}(z)\ge \sigma >0$ on $S$ and $\mathcal{I}(z)\le 0$ on $\partial Q$.\end{itemize}
Then $\mathcal{I}$ has a critical point $z_0 \in H$ such that $\mathcal{I}(z_0)\ge \sigma$.
\end{theorem}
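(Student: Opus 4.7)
The plan is to obtain the critical value as a minimax level over a suitable class of deformations of $Q$, and to verify that this level is positive via a topological intersection argument based on the Leray--Schauder degree. Define
\[
c := \inf_{h \in \Gamma} \sup_{z \in Q} \mathcal{I}(h(z)), \qquad \Gamma := \{h \in C([0,1]\times H, H) : h_0 = \mathrm{id}, \; \mathcal{I}(h_t(z)) \le \mathcal{I}(z), \; h_t\big|_{\partial Q} = \mathrm{id}\},
\]
where $h_t := h(t,\cdot)$. Under the $(PS)$ condition, a standard deformation argument shows that if $c$ is not a critical value, one can construct $h \in \Gamma$ that pushes $\sup_Q \mathcal{I}\circ h_1$ strictly below $c$, contradicting the definition of $c$. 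So the key is to prove that $c \ge \sigma$, which reduces to the linking property: for every $h \in \Gamma$, the image $h_1(Q)$ meets $S$.

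The heart of the argument is the linking. Given $h \in \Gamma$, one wants to find $(t,z_2) \in [0,R_1] \times \{\|z_2\|\le R_2\}$ with $h_1(B_2(te_1 + z_2)) \in S$, i.e.\ with
\[
\Phi(t,z_2) := P_2 B_1^{-1}\bigl(h_1(B_2(te_1+z_2))\bigr) = 0 \quad \text{and}\quad \bigl\|P_1 B_1^{-1} h_1(B_2(te_1+z_2))\bigr\| = \rho,
\]
where $P_1 = \mathrm{id}-P_2$. I would homotope $h_1$ to the composition of the linear gradient flow $e^{\tau L}$ of the quadratic part with a compact perturbation coming from $\mathcal{J}'$. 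Since $\mathcal{J}'$ is compact, the nonlinear piece is a compact perturbation of the identity, and one can appeal to Leray--Schauder degree. The hypothesis that $\widehat{B}_\tau = P_2 B_1^{-1} e^{\tau L} B_2 : H_2 \to H_2$ is invertible for every $\tau \ge 0$ is designed precisely to guarantee that throughout the homotopy the map $\Phi$ has no zero on $\partial([0,R_1]\times \{\|z_2\|\le R_2\})$ (the boundary conditions $\mathcal{I}\le 0$ on $\partial Q$ combined with $\mathcal{I}\ge \sigma$ on $S$ rule out the relevant faces). Consequently, the degree of $\Phi$ is the degree of the reference map $(t,z_2)\mapsto \widehat{B}_\tau z_2$ on a cylinder, which is nonzero; this forces an interior zero and yields the intersection with $S$.

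Once the linking is established, for every $h \in \Gamma$ we have $\sup_Q \mathcal{I}\circ h_1 \ge \inf_S \mathcal{I} \ge \sigma$, hence $c \ge \sigma > 0$. Combining this with the $(PS)$ assumption and the standard quantitative deformation lemma adapted to the admissible class $\Gamma$ (which is preserved under negative gradient-like flows, since those flows leave $\partial Q$ invariant whenever $\mathcal{I}|_{\partial Q} \le 0 < c$), we conclude that $c$ is a critical value of $\mathcal{I}$, producing $z_0$ with $\mathcal{I}(z_0)=c \ge \sigma$ and $\mathcal{I}'(z_0)=0$.

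The main obstacle is the topological/degree step that underlies the linking: the class $\Gamma$ must be chosen compatibly with the splitting $H_1\oplus H_2$ and with the deformation lemma, while simultaneously allowing the Leray--Schauder degree computation via the homotopy to $e^{\tau L}$. The role of the operators $B_1,B_2$ and the invertibility of $\widehat{B}_\tau$ is delicate: they must be used both to set up $S$ and $Q$ so that they actually link, and to ensure the degree does not vanish along the homotopy. Verifying that the degree computation genuinely reduces to an invertible finite-dimensional-like map on $H_2$ (so that one may take, e.g., degree equal to $\pm 1$) is the technically subtle point.
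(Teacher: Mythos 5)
The paper does not actually prove Theorem \ref{2210091640}: it is quoted verbatim from Felmer \cite[Theorem 3.1]{Felmer1993} and used as a black box, so there is no in-paper argument to compare yours against. I can only measure your sketch against the standard Benci--Rabinowitz/Felmer proof that the citation points to.

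Your outline has the right global shape (minimax over deformations of $Q$, intersection property via degree, deformation argument under $(PS)$), but there is a genuine gap in your definition of the admissible class $\Gamma$, and it is not a technicality that can be deferred. With $\Gamma$ consisting of \emph{all} continuous, $\mathcal{I}$-decreasing homotopies fixing $\partial Q$, the linking property ``$h_1(Q)\cap S\neq\emptyset$ for every $h\in\Gamma$'' is simply false when $H_1$ and $H_2$ are both infinite dimensional, which is exactly the strongly indefinite situation this theorem is designed for: infinite-dimensional spheres are contractible, and one can slide $Q$ off $S$ by a homeomorphism fixing $\partial Q$. This is precisely why Felmer, following Benci--Rabinowitz, restricts the admissible deformations to those of the special form $h(t,z)=e^{\theta(t,z)L}z+K(t,z)$ with $\theta$ continuous and bounded and $K$ compact, together with the boundary condition. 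Only for such $h$ is the map
\[
(t,z_2)\;\longmapsto\;\Bigl(\bigl\|P_1B_1^{-1}h_1(B_2(te_1+z_2))\bigr\|-\rho,\;\;P_2B_1^{-1}h_1(B_2(te_1+z_2))\Bigr)
\]
a compact perturbation of an invertible linear reference map built from $\widehat{B}_{\tau}$, so that the Leray--Schauder degree is defined and the invertibility hypothesis on $\widehat{B}_{\tau}$ yields nonvanishing degree. You gesture at this (``homotope $h_1$ to $e^{\tau L}$ composed with a compact perturbation''), but an arbitrary element of the class $\Gamma$ you wrote down admits no such homotopy, so the degree computation cannot even be set up. Repairing the definition then obliges you to re-verify the deformation step: one must check that the truncated negative pseudo-gradient flow of $\mathcal{I}(z)=\frac12\langle Lz,z\rangle-\mathcal{J}(z)$ stays in the restricted class (via the variation-of-constants formula and the compactness of $\mathcal{J}'$), and that compositions of admissible deformations remain admissible. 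As written, your proposal correctly identifies this as the delicate point but does not close it.
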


\subsection{Legendre-Fenchel transform}
Let $V$ be a Banach space and $V^*$ be its dual space. For a function $G: V\to \mathbb{R}\cup \{+\infty\}$, $G\not\equiv +\infty$, the function $G^*: V^*\to \mathbb{R}\cup \{+\infty\}$ given by
\begin{equation}\label{2305291511}
G^*(u^*)=\sup\{\langle u^*, v \rangle-G(v): v\in V\},\quad \forall\, u^*\in V^*,
\end{equation}
is named the Legendre-Fenchel transform of $G$. The following are some basic properties of $G^*$.
\begin{lemma}\label{2302252143}
Let $G^*$ be the Legendre-Fenchel transform of $G$.
\begin{itemize}
\item[\rm (a)] The Legendre-Fenchel transform reverses the order, that is, $\widetilde{G}\ge G$ implies $\widetilde{G}^*\le G^*$.
\item[\rm (b)] If $G\in C^1(V, \mathbb{R})$ is convex and $G^*\in C^1(V^*, \mathbb{R})$, then for $v\in V$, $u^*\in V^*$, there holds
\[
G^*(u^*)+G(v)=\langle u^*, v\rangle \iff v=(G^*)'(u^*) \iff u^*=G'(v).
\]
\item[\rm (c)] If $V=\mathbb{R}^N$, $G$ is lower semi-continuous, strictly convex, and $\lim_{|x|\to \infty}\frac{G(x)}{|x|}=+\infty$, then $G^*\in C^1(\mathbb{R}^N, \mathbb{R})$.
\item[\rm (d)] If $V=\mathbb{R}^N$, $G(x)=\frac1{p+1}|x|^{p+1}$ with $p>0$, then  $G^*(x)=\frac{p}{p+1} |x|^{1+\frac1p}$. If $V=L^{p+1}(\Omega)$ with $p>0$, $G(v)=\frac{1}{p+1}|v|_{p+1}^{p+1}$, then $G^*(u^*)=\frac{p}{p+1} |u^*|^{1+\frac1p}_{1+\frac1p}$.
\item[\rm (e)] Let $a>0$ and $G_a(u):=aG(u)$, then
$G^*_a(u^*)=aG^*\big(\frac{u^*}{a}\big)$.
\end{itemize}
\end{lemma}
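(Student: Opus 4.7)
The plan is to treat each item directly from the definition \eqref{2305291511}. Items (a), (b) and (e) are essentially formal, (c) is the only part with real analytic content, and (d) then follows from (b) by an explicit computation.

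For (a), if $\widetilde G\ge G$ pointwise then $\langle u^*,v\rangle - \widetilde G(v)\le \langle u^*,v\rangle - G(v)$ for every $v\in V$, and taking the supremum preserves this inequality. For (e), a scaling inside the supremum gives $G_a^*(u^*)=\sup_v\{\langle u^*,v\rangle - aG(v)\} = a\sup_v\{\langle u^*/a, v\rangle - G(v)\} = aG^*(u^*/a)$. For (b), the Fenchel-Young inequality $G(v)+G^*(u^*)\ge \langle u^*,v\rangle$ is immediate from \eqref{2305291511}, with equality if and only if $v$ maximizes $w\mapsto \langle u^*,w\rangle - G(w)$; since this functional is concave and $C^1$, the first-order condition $u^* = G'(v)$ is equivalent to global maximality, which yields the first equivalence. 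Swapping the roles of $G$ and $G^*$ and using the biconjugate identity $G^{**}=G$ (valid since $G$ is convex and continuous, a standard fact cited from \cite{ET1999}) gives the equivalence with $v=(G^*)'(u^*)$.

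The main analytic substance is (c). My plan is to fix $u^*\in \mathbb{R}^N$ and study $\Phi(v)=\langle u^*,v\rangle - G(v)$. By the coercivity assumption $G(v)/|v|\to\infty$, $\Phi(v)\to -\infty$ as $|v|\to\infty$; by the lower semi-continuity of $G$, $\Phi$ is upper semi-continuous; so the supremum defining $G^*(u^*)$ is attained at some $v(u^*)\in \mathbb{R}^N$, and strict convexity of $G$ forces uniqueness. Differentiability of $G^*$ at $u^*$ with $\nabla G^*(u^*)=v(u^*)$ then follows from Danskin's envelope theorem, since $\Phi$ depends linearly on $u^*$. The remaining step, and the principal obstacle though entirely routine, is the continuity of $u^*\mapsto v(u^*)$: if $u_n^*\to u^*$, uniform coercivity of $G$ bounds $\{v(u_n^*)\}$; any cluster point $v_\infty$ satisfies $\langle u^*, v_\infty\rangle - G(v_\infty)\ge \langle u^*, w\rangle - G(w)$ for every $w$ by passing to the limit (using the lower semi-continuity of $G$), and uniqueness of the maximizer forces the whole sequence to converge, establishing $G^*\in C^1(\mathbb{R}^N,\mathbb{R})$.

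Finally, for (d), with $G(x)=\frac{1}{p+1}|x|^{p+1}$ on $\mathbb{R}^N$ the hypotheses of (c) are immediate, so $G^*\in C^1$. The gradient $G'(x)=|x|^{p-1}x$ is inverted by $(G')^{-1}(u^*)=|u^*|^{1/p-1}u^*$, and (b) gives
\[
G^*(u^*) = \langle u^*,(G')^{-1}(u^*)\rangle - G((G')^{-1}(u^*)) = |u^*|^{1+1/p} - \tfrac{1}{p+1}|u^*|^{1+1/p} = \tfrac{p}{p+1}|u^*|^{1+1/p}.
\]
For the functional version on $V=L^{p+1}(\Omega)$, I would either redo the same computation verbatim (the Fr\'echet derivative of $v\mapsto \frac{1}{p+1}|v|^{p+1}_{p+1}$ in $L^{p+1}(\Omega)$ is $|v|^{p-1}v$, viewed in the dual $L^{(p+1)/p}(\Omega)$) or invoke Rockafellar's classical result that the Legendre-Fenchel transform of an integral functional on $L^{p+1}(\Omega)$ equals the integral of the pointwise conjugate, which immediately reduces the second formula to the first.
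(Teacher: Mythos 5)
Your proposal is correct. Note that for items (a)--(d) the paper gives no argument of its own: it simply cites Struwe (Chapter I, Section 6.2 and Lemma 6.3), Mawhin--Willem (Proposition 2.4) and Ekeland--Temam (Proposition 4.2, Remark 4.1), and only item (e) is ``checked directly.'' What you supply are precisely the standard arguments behind those citations: the order-reversal and scaling identities from the definition, the Fenchel--Young equality characterization via the first-order condition for the concave functional $w\mapsto\langle u^*,w\rangle-G(w)$ together with biconjugation for (b), attainment/uniqueness of the maximizer plus an envelope argument for (c), and the explicit inversion of $G'$ for (d). Two minor points worth keeping in mind if you write this out in full: in (b) the biconjugation step formally places $(G^*)'(u^*)$ in $V^{**}$, which is harmless here since the lemma is only applied with $V=\mathbb{R}^2$ or reflexive $L^p$ spaces; and in (c) the appeal to Danskin's theorem should be accompanied by the observation that, locally uniformly in $u^*$, the coercivity of $G$ lets one restrict the supremum to a fixed compact ball (alternatively, one can bypass Danskin entirely by noting that $\partial G^*(u^*)$ is the singleton $\{v(u^*)\}$ and that a convex function on $\mathbb{R}^N$ differentiable everywhere is automatically $C^1$).
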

\begin{proof}
For ${\rm (a)}$-${\rm (d)}$, see respectively \cite[Chapter I, Section 6.2]{Struwe2008}, \cite[Chapter I, Lemma 6.3]{Struwe2008}, \cite[Proposition 2.4]{MW1989} and \cite[Proposition 4.2, Remark 4.1]{ET1999}. For ${\rm (e)}$, one can check directly by means of \eqref{2305291511}.
\end{proof}

\section{Proof of Theorem \ref{220516}}\label{2306141421}
In this section, we will use interpolation space method to handle Theorem \ref{220516}. Throughout this section, we assume that $H$ satisfies $(H1)$-$(H4)$. 

Let us begin with the understanding of interpolation spaces $E^{\alpha}$ ($\alpha \ge 0$) and interpolation operators $A^{\alpha}$ given in \eqref{2406191546} and \eqref{2406191601}. First, $E^0=L^2(\Omega)$ and $A^{0} = {\rm id}_{L^2(\O)}$. Clearly for $\alpha \ge 0$, $A^{\alpha}$ is an isomorphism and its inverse $A^{-\alpha}: L^2(\Omega)\to E^{\alpha}$ can be denoted by
\[
A^{-\alpha}u=\sum_{j\ge 1}\lambda_j^{\frac{-\alpha}{2s}}a_j\varphi_j\quad \forall\, u=\sum_{j\ge 1}a_j\varphi_j\in L^2(\Omega).
\]
More generally, for convenience, we define formally, when it makes sense, 
\begin{align}
\label{Abeta}
A^\beta\Big(\sum_{j\ge 1}a_j\varphi_j\Big) =\sum_{j\ge 1}\lambda_j^{\frac{\beta}{2s}}a_j\varphi_j, \quad \forall\;\beta \in \R.
\end{align}
\begin{proposition}\label{2209072124}
For any $s \in (0, 1)$, there hold $E^{2s}=\mathcal{H}^{2s}(\Omega)$, and $A^{2s}u=(-\Delta)^s u$ for all $u\in E^{2s}$.
\end{proposition}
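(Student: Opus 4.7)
The plan is to prove the two set inclusions separately and track the action of $A^{2s}$ along the way.

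\textbf{Step 1 ($E^{2s}\subset\mathcal{H}^{2s}(\Omega)$ and identification).} Take $u=\sum_{j\ge 1}a_j\varphi_j\in E^{2s}$, so that $\sum_j\lambda_j^2 a_j^2<\infty$. Set $f:=A^{2s}u=\sum_j\lambda_j a_j\varphi_j$, which is a convergent series in $L^2(\Omega)$; in particular $u,f\in L^2(\Omega)\subset L^1(\Omega)$. I will verify the $L^1$-weak identity \eqref{L1weak}. For any test function $\varphi\in\mathcal{T}_s(\Omega)$, expand in $L^2$ as $\varphi=\sum_j b_j\varphi_j$ with $b_j=\langle\varphi,\varphi_j\rangle_{L^2}$, and $(-\Delta)^s\varphi=\sum_j c_j\varphi_j$ in $L^2$ with $c_j=\langle(-\Delta)^s\varphi,\varphi_j\rangle_{L^2}$. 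Since both $\varphi$ and $\varphi_j$ belong to $X_0^s(\Omega)$, the symmetry of the bilinear form gives
\[
c_j=\int_{\R^{2N}}\frac{(\varphi(x)-\varphi(y))(\varphi_j(x)-\varphi_j(y))}{|x-y|^{N+2s}}\,dxdy=\langle(-\Delta)^s\varphi_j,\varphi\rangle=\lambda_j b_j.
\]
Then by Parseval,
\[
\int_\Omega u\,(-\Delta)^s\varphi\,dx=\sum_j a_j c_j=\sum_j\lambda_j a_j b_j=\int_\Omega f\varphi\,dx,
\]
which proves that $u\in\mathcal{W}^{2s,2}(\Omega)=\mathcal{H}^{2s}(\Omega)$ with $(-\Delta)^s u=A^{2s}u$.

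\textbf{Step 2 ($\mathcal{H}^{2s}(\Omega)\subset E^{2s}$).} Take $u\in\mathcal{H}^{2s}(\Omega)$ and let $f:=(-\Delta)^s u\in L^2(\Omega)$, so $u$ is the unique $L^1$-weak solution of \eqref{2306011244} by Proposition \ref{2304232341}. Expand $f=\sum_j f_j\varphi_j$ in $L^2$ and define
\[
w:=\sum_{j\ge 1}\frac{f_j}{\lambda_j}\varphi_j.
\]
Since $\lambda_j\ge\lambda_1>0$, the series converges in $L^2(\Omega)$, and
\[
\sum_{j\ge 1}\lambda_j^2\Big(\frac{f_j}{\lambda_j}\Big)^{\!2}=\sum_{j\ge 1}f_j^2=|f|_2^2<\infty,
\]
so $w\in E^{2s}$. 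By Step 1 applied to $w$, it is a $L^1$-weak solution of $(-\Delta)^s w=A^{2s}w=\sum_j f_j\varphi_j=f$. Uniqueness (Proposition \ref{2304232341}) then forces $u=w\in E^{2s}$, and
\[
A^{2s}u=A^{2s}w=\sum_{j\ge 1}f_j\varphi_j=f=(-\Delta)^s u.
\]

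\textbf{Main obstacle.} The only nontrivial step is the computation of $c_j=\lambda_j b_j$ in Step 1, where we must legitimately identify the $L^2$-pairing $\langle(-\Delta)^s\varphi,\varphi_j\rangle_{L^2}$ (which makes sense because $(-\Delta)^s\varphi\in C_c^\infty(\Omega)$) with the symmetric bilinear form on $X_0^s(\Omega)$. This is what allows the spectral expansion to commute with the pointwise $(-\Delta)^s$ applied to test functions, and everything else reduces to Parseval and the uniqueness statement of Proposition \ref{2304232341}.
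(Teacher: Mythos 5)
Your proof is correct, and it takes a genuinely different route from the paper's on both inclusions. For $E^{2s}\subset\mathcal{H}^{2s}(\Omega)$ the paper defines the linear functional $\varphi\mapsto\int_\Omega u\,(-\Delta)^s\varphi\,dx$ on $\mathcal{H}^{2s}(\Omega)$, bounds it by $\|u\|_{E^{2s}}|\varphi|_2$, and then invokes the Riesz representation theorem together with the density of $\mathcal{H}^{2s}(\Omega)$ in $L^2(\Omega)$ to produce the $L^2$ function $v=(-\Delta)^s u$. You instead exhibit the candidate $f=A^{2s}u$ directly from the spectral definition and verify the $L^1$-weak identity against $\varphi\in\mathcal{T}_s(\Omega)$ by Parseval. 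This is more constructive, avoids Riesz entirely, and has the additional merit of working strictly with the test class $\mathcal{T}_s(\Omega)$ that appears in the definition \eqref{L1weak}. For $\mathcal{H}^{2s}(\Omega)\subset E^{2s}$ the paper expands both $u$ and $(-\Delta)^s u$ in the eigenbasis and computes the relation $b_j=\lambda_j a_j$ by moving $(-\Delta)^s$ onto $\varphi_j$, which tacitly requires justifying the self-adjointness pairing $\int(-\Delta)^s u\,\varphi_j=\int u\,(-\Delta)^s\varphi_j$ even though $\varphi_j\notin\mathcal{T}_s(\Omega)$. You sidestep that technical point altogether: you build $w\in E^{2s}$ from the Fourier coefficients of $f$, apply your Step 1 to get $(-\Delta)^s w=f$, and conclude $u=w$ by the uniqueness statement of Proposition \ref{2304232341}. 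The two proofs extract the same Fourier relation $a_j=f_j/\lambda_j$, but your uniqueness argument is arguably cleaner as it keeps the test functions where the definitions actually place them.

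One small remark: in Step 1 you should note explicitly that the rearrangement $\sum_j a_j c_j=\sum_j\lambda_j a_j b_j$ is absolutely convergent by Cauchy--Schwarz, using $\sum\lambda_j^2 a_j^2<\infty$ and $\sum b_j^2<\infty$; this is implicit in your appeal to Parseval but worth stating since the series involves the unbounded weights $\lambda_j$.
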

\begin{proof}
For any $u\in \mathcal{H}^{2s}(\Omega)$, let
\[
u=\sum_{j\ge 1} a_j \varphi_j\in L^2(\Omega), \quad (-\Delta)^s u=\sum_{j\ge 1} b_j \varphi_j\in L^2(\Omega).
\]
Observe that
\[
b_j=\int_{\Omega}(-\Delta)^s u \varphi_j dx=\int_{\Omega}  (-\Delta)^s \varphi_j udx=\int_{\Omega}  \lambda_j \varphi_j udx=a_j\lambda_j.
\]
So $ (-\Delta)^s u= \sum_{j\ge 1} a_j \lambda_j \varphi_j$ and $\sum_{j\ge 1}a^2_j\lambda^2_j<\infty$ as $u\in \mathcal{H}^{2s}(\Omega)$. It follows that $\mathcal{H}^{2s}(\Omega)\subset E^{2s}$ and $(-\Delta)^s u = A^{2s}u$ for all $u\in \mathcal{H}^{2s}(\Omega)$.

Conversely, let $u\in E^{2s}$, then $u=\sum_{j\ge 1} a_j\varphi_j$ with $\sum_{j\ge 1} \lambda_j^2a_j^2 < \infty$. Let $\varphi \in \mathcal{H}^{2s}(\Omega)$, $\varphi=\sum_{j\ge 1} b_j\varphi_j$, we consider
\begin{equation}\label{2306142028}
f(\varphi)=\int_{\Omega}u (-\Delta)^s\varphi dx = \sum_{j\ge 1} a_jb_j\lambda_j.
\end{equation}
Therefore
\[
|f(\varphi)| \le \Big|\sum_{j\ge 1} a_j^2\lambda_j^2\Big|^{\frac12}\Big|\sum_{j\ge 1} b_j^2\Big|^{\frac12}= \lVert u\rVert_{E^{2s}}| \varphi|_{2}.
\]
Using Riesz's representation theorem and the density of $\mathcal{H}^{2s}(\Omega)$ in $L^2(\Omega)$ (all eigenfunctions belong to $\mathcal{H}^{2s}(\Omega)$), we deduce that there exists $v\in L^2(\Omega)$ such that
\begin{equation}\label{2306142029}
f(\varphi)=\int_{\Omega}v\varphi dx.
\end{equation}
Combining \eqref{2306142028} and \eqref{2306142029}, we get $(-\Delta)^su=v$, so $(-\Delta)^su\in L^2(\Omega)$ and $u\in \mathcal{H}^{2s}(\Omega)$.
\end{proof}
\begin{remark}\label{2211231543}
By definition of $E^\alpha$, if $0<\alpha<2s$, $E^\alpha$ is the real interpolation space $[L^2(\Omega), E^{2s}(\Omega)]_{\alpha / 2s}$. Therefore, applying Proposition \ref{2302102144} and Proposition \ref{2209072124}, $E^\alpha \subset L^{p+1}(\Omega)$ is a continuous embedding when $\frac{1}{p+1}\ge \frac12-\frac{\alpha}{N}$ and this embedding is compact provided that the strict inequality is valid, see \cite{Persson1964} and \cite[Sections 7.22, 7.23]{Adams2003}.
\end{remark}

By  \eqref{condition}, \eqref{Condition_pq} and Remark \ref{2211231543},  fix $\alpha\in (0, 2s)$ such that
\begin{equation}\label{2211231541}
N\left(\frac12-\frac{1}{\max(p,q)+1}\right)<\alpha<N\left(\frac{1}{\min(p, q)+1}-\frac{N-4s}{2N}\right),
\end{equation}
so that
$$\mathbf{E}_{\alpha} :=E^{\alpha}\times E^{2s-\alpha}\subset X^*$$
and $\mathcal{E}: \mathbf{E}_{\alpha}\to \mathbb{R}$ given in \eqref{2306171702} is well defined.  In this section, we always consider $\mathbf{E}_{\alpha}$ with $\alpha \in (0, 2s)$ satisfying \eqref{2211231541}. $\mathbf{E}_{\alpha}$ is a Hilbert space with scalar product
\[
( (u_1, v_1), (u_2, v_2))_{\mathbf{E}_{\alpha}}=(u_1, u_2)_{E^\alpha}+(v_1, v_2)_{E^{2s-\alpha}}
\]
and $\mathbf{E}_{\alpha}=\mathbf{E}^+_{\alpha}\oplus \mathbf{E}^-_{\alpha}$, where
\[
\begin{split}
\mathbf{E}^+_{\alpha}&=\big\{(u, A^{-2s+\alpha}\circ A^{\alpha} u): u\in E^\alpha\big\},\quad \mathbf{E}^-_{\alpha}=\big\{(u, -A^{-2s+\alpha}\circ A^{\alpha} u): u\in E^\alpha\big\}.
\end{split}
\]
In the spirit of \eqref{Abeta}, we will write $A^{-2s+\alpha}\circ A^{\alpha}$ as $A^{-2s+2\alpha}$ for briefness, similarly we write $A^{2s-2\alpha}$ for $A^{-\alpha}\circ A^{2s-\alpha}$. If $z=(u,v)\in \mathbf{E}_{\alpha}$, denote
\[
\begin{split}
z^+&=(u^+,v^+)=\left(\frac{u+A^{2s-2\alpha}v}{2},\frac{v+A^{-2s+2\alpha}u}{2} \right)\in \mathbf{E}^+_{\alpha},\\
z^-&=(u^-,v^-)=\left(\frac{u-A^{2s-2\alpha}v}{2},\frac{v-A^{-2s+2\alpha} u}{2}\right)\in \mathbf{E}^-_{\alpha},
\end{split}
\]
then $z=z^++z^-$. Notice that although the quadratic part of $\mathcal{E}$ turns to be strongly indefinite, it is positive definite in $\mathbf{E}^+_{\alpha}$ and negative definite in $\mathbf{E}^-_{\alpha}$.  Consider the bilinear form $B: \mathbf{E}_{\alpha} \times \mathbf{E}_{\alpha} \to \mathbb{R}$ defined by
\begin{equation}\label{2401062223}
B((u,v), (\varphi, \psi))=\int_{\Omega}A^\alpha uA^{2s-\alpha}\psi\, +A^{2s-\alpha} vA^{\alpha}\varphi\, dx.
\end{equation}
It is easy to see that $B$ is continuous and symmetric, which induces a self-adjoint bounded linear operator $L: \mathbf{E}_{\alpha}\to \mathbf{E}_{\alpha}$ satisfying
\begin{equation}\label{2401131356}
B((u, v), (\varphi, \psi))=\big\langle L(u, v), (\varphi, \psi)\big\rangle_{\mathbf{E}_{\alpha}},\quad\forall\; (u, v), (\varphi, \psi)\in \mathbf{E}_{\alpha}.
\end{equation}

\begin{remark}\label{2302211357}
By definition, we see easily that
\[
L(u, v)=(A^{2s-2\alpha}v, A^{2\alpha-2s}u ),\quad \forall\; (u, v)\in \mathbf{E}_{\alpha},
\]
and $L|_{ \mathbf{E}_{\alpha}^+}={\rm id}_{\mathbf{E}_{\alpha}^+}$, $L|_{ \mathbf{E}_{\alpha}^-}=-{\rm id}_{\mathbf{E}_{\alpha}^-}$.
\end{remark}

\begin{proposition}\label{2302302237}
Let $p$, $q$ satisfy \eqref{condition}-\eqref{Condition_pq}, and $(u,v)\in \mathbf{E}_{\alpha}$ be a critical point of $\mathcal{E}$. Then $(u, v)$ is a $L^1$-weak solution to \eqref{main}.
\end{proposition}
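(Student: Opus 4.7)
The plan is to show that $\mathcal{E}$ is of class $C^1$ on $\mathbf{E}_\alpha$ with the expected Fr\'echet derivative, extract the two Euler-Lagrange equations from $\mathcal{E}'(u,v)=0$, and then translate them into the $L^1$-weak form by identifying the interpolation pairing with the duality pairing against $(-\Delta)^s\varphi$ for test functions $\varphi\in\mathcal{T}_s(\Omega)$.

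First I would handle integrability. By the choice of $\alpha$ in (2211231541), Remark \ref{2211231543} yields continuous embeddings $E^\alpha\hookrightarrow L^{p+1}(\Omega)$ and $E^{2s-\alpha}\hookrightarrow L^{q+1}(\Omega)$, so $(u,v)\in\mathbf{E}_\alpha$ lies in $L^{p+1}(\Omega)\times L^{q+1}(\Omega)\subset L^1(\Omega)\times L^1(\Omega)$. The growth condition $(H4)$ combined with H\"older's inequality then gives $H_u(u,v)\in L^{(p+1)/p}(\Omega)$ and $H_v(u,v)\in L^{(q+1)/q}(\Omega)$, both contained in $L^1(\Omega)\subset L^1(\Omega;\delta^s dx)$ since $\Omega$ is bounded. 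Using $(H4)$ once more (its integrated form gives $|H(u,v)|\le C(|u|^{p+1}+|v|^{q+1}+|u|+|v|+1)$) together with a standard dominated-convergence argument, the nonlinear part $(u,v)\mapsto \int_\Omega H(u,v)\,dx$ is $C^1$ on $X^*$, hence on $\mathbf{E}_\alpha$, with derivative $(\varphi,\psi)\mapsto \int_\Omega H_u(u,v)\varphi+H_v(u,v)\psi\,dx$. The quadratic part is $C^\infty$. Thus $\mathcal{E}\in C^1(\mathbf{E}_\alpha,\mathbb{R})$, and $\mathcal{E}'(u,v)=0$ reads
\begin{equation*}
\int_\Omega A^\alpha u\, A^{2s-\alpha}\psi\,dx+\int_\Omega A^\alpha\varphi\, A^{2s-\alpha} v\,dx=\int_\Omega H_v(u,v)\psi+H_u(u,v)\varphi\,dx,\quad \forall\,(\varphi,\psi)\in\mathbf{E}_\alpha.
\end{equation*}
Taking $\varphi=0$ and then $\psi=0$ decouples this into two separate identities.

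The main step is rewriting the quadratic pairings in $L^1$-weak form. Fix $\varphi\in\mathcal{T}_s(\Omega)$; by Proposition \ref{2209072124}, $\varphi\in\mathcal{H}^{2s}(\Omega)=E^{2s}\subset E^\alpha$ (since $\alpha\le 2s$ and only finitely many $\lambda_j$ are less than $1$), so $\varphi$ is an admissible test function in $E^\alpha$. Expanding $\varphi=\sum b_j\varphi_j$ and $v=\sum c_j\varphi_j$, Cauchy-Schwarz gives
\begin{equation*}
\sum_{j\ge 1}|\lambda_j b_j c_j|\le \Bigl(\sum_{j\ge 1}\lambda_j^{\alpha/s}b_j^2\Bigr)^{1/2}\Bigl(\sum_{j\ge 1}\lambda_j^{(2s-\alpha)/s}c_j^2\Bigr)^{1/2}=\|\varphi\|_{E^\alpha}\|v\|_{E^{2s-\alpha}}<\infty,
\end{equation*}
so the series converges absolutely and
\begin{equation*}
\int_\Omega A^\alpha\varphi\,A^{2s-\alpha}v\,dx=\sum_{j\ge 1}\lambda_j^{\alpha/(2s)}\lambda_j^{(2s-\alpha)/(2s)}b_jc_j=\sum_{j\ge 1}\lambda_j b_j c_j=\int_\Omega v(-\Delta)^s\varphi\,dx,
\end{equation*}
where the last equality uses $(-\Delta)^s\varphi=A^{2s}\varphi=\sum \lambda_j b_j\varphi_j$ and $c_j=\int_\Omega v\varphi_j\,dx$. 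Substituting back, for every $\varphi\in\mathcal{T}_s(\Omega)$, $\int_\Omega v(-\Delta)^s\varphi\,dx=\int_\Omega H_u(u,v)\varphi\,dx$, and symmetrically with $\psi\in\mathcal{T}_s(\Omega)$, $\int_\Omega u(-\Delta)^s\psi\,dx=\int_\Omega H_v(u,v)\psi\,dx$, which is exactly the $L^1$-weak formulation for \eqref{main}.

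The mildly delicate step is the spectral identification above; the rest is bookkeeping. I expect the hardest part to be verifying cleanly that the bilinear pairing $\int_\Omega A^\alpha\varphi A^{2s-\alpha}v\,dx$ coincides with $\int_\Omega v(-\Delta)^s\varphi\,dx$ for $\varphi\in E^{2s}$ and $v\in E^{2s-\alpha}$, because one must justify interchanging sums and integrals, which is exactly handled by the absolute convergence bound above. Everything else (integrability, $C^1$-regularity of the nonlinear term, extracting the two equations) reduces to the embeddings proven in Proposition \ref{2302102144} together with H\"older's inequality and the growth hypothesis $(H4)$.
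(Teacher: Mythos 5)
Your proposal is correct and follows essentially the same route as the paper's proof: extract the two Euler--Lagrange identities from $\mathcal{E}'(u,v)=0$ by setting $\varphi=0$ and $\psi=0$, note $\mathcal{T}_s(\Omega)\subset E^{2s}\subset E^{\alpha}\cap E^{2s-\alpha}$, and use the spectral expansion to identify $\int_\Omega A^\alpha\varphi\,A^{2s-\alpha}v\,dx$ with $\int_\Omega v(-\Delta)^s\varphi\,dx$ (the paper writes $\lambda_j^s$ where both you and the computation give $\lambda_j$ — that is a typo there, not a discrepancy on your side). The extra care you take over the $C^1$-regularity of $\mathcal{E}$, the membership $H_u(u,v),H_v(u,v)\in L^1(\Omega;\delta^s dx)$, and the absolute convergence of the series is welcome detail that the paper leaves implicit.
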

\begin{proof}
Since $(u,v)\in \mathbf{E}_{\alpha}$ is a critical point of $\mathcal{E}$, $\langle \mathcal{E}'(u,v), (\varphi,\psi)\rangle=0$ for all $(\varphi,\psi)\in \mathbf{E}_{\alpha}$, i.e.
\[
\int_{\Omega}A^\alpha uA^{2s-\alpha}\psi\, dx+\int_{\Omega}A^{2s-\alpha} vA^{\alpha}\varphi\, dx-\int_{\Omega}H_v(u,v)\psi\, dx-\int_{\Omega}H_u(u, v)\varphi\, dx=0.
\]
Taking $\varphi=0$, we get
\begin{equation}\label{2302202221}
\int_{\Omega}A^\alpha uA^{2s-\alpha}\psi\, dx=\int_{\Omega}H_v(u,v)\psi\, dx.
\end{equation}
It follows from Proposition \ref{2209072124} that $\mathcal{T}_s(\Omega)\subset E^{2s}\subset E^{2s-\alpha}$. Let $\psi\in \mathcal{T}_s(\Omega)$ with $\psi=\sum_{j\ge 1}  b_j \varphi_j$, and decompose $u=\sum_{j\ge 1} a_j \varphi_j$.  By Proposition \ref{2209072124},
\begin{equation}\label{2302202220}
\int_{\Omega}A^\alpha uA^{2s-\alpha}\psi\, dx=\sum_{j\ge 1}  a_jb_j\lambda_j^{s}=\int_{\Omega} uA^{2s}\psi\, dx=\int_{\Omega} u(-\Delta)^s\psi\, dx.
\end{equation}
Combining \eqref{2302202221} with \eqref{2302202220}, we obtain
$$\int_{\Omega} u(-\Delta)^s\psi\, dx=\int_{\Omega}H_v(u,v)\psi\, dx.$$
Similarly, for any $\varphi \in \mathcal{T}_s(\Omega)$, there holds
$$\int_{\Omega} v(-\Delta)^s\varphi\, dx=\int_{\Omega}H_u(u,v)\varphi\, dx.$$
So $(u,v)$ is a $L^1$-weak solution to (\ref{main}).
\end{proof}

To find a nontrivial critical point of $\mathcal{E}$ by Theorem \ref{2210091640}, our next step is to check the $(PS)$ condition for $\mathcal{E}$. We start from the following compactness property.
\begin{lemma}\label{2211151517}
Let $p$, $q$ satisfy \eqref{condition}-\eqref{Condition_pq}, ${H}$ satisfy $(H4)$ and $\alpha$ satisfy \eqref{2211231541}, then $H_z: \mathbf{E}_{\alpha}\to X$ is compact.
\end{lemma}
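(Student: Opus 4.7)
The plan is to argue by the classical scheme for compact Nemytskii-type operators: invoke the compact embeddings granted by \eqref{2211231541} and Remark \ref{2211231543} to obtain strong $L^r$-convergence of $(u_n,v_n)$, then upgrade to convergence of $H_z(u_n,v_n)$ in $X$ via a Vitali-type equi-integrability argument.

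First, the range \eqref{2211231541} is tuned so that, by Remark \ref{2211231543}, the embeddings
\[
E^{\alpha}\hookrightarrow L^{p+1}(\Omega),\qquad E^{2s-\alpha}\hookrightarrow L^{q+1}(\Omega)
\]
are both compact: these match the Sobolev-type thresholds $\alpha>N(\tfrac12-\tfrac{1}{p+1})$ and $2s-\alpha>N(\tfrac12-\tfrac{1}{q+1})$, which one reads off from \eqref{2211231541} once the invariance of the problem under the swap $(u,v)\leftrightarrow(v,u)$, $(p,q)\leftrightarrow(q,p)$, $\alpha\leftrightarrow 2s-\alpha$ is used to assume $p\ge q$. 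Given any bounded sequence $\{(u_n,v_n)\}\subset \mathbf{E}_\alpha$, after extracting a subsequence one has $u_n\to u$ strongly in $L^{p+1}(\Omega)$, $v_n\to v$ strongly in $L^{q+1}(\Omega)$, and $(u_n,v_n)\to (u,v)$ a.e.~in $\Omega$. Since $H\in C^1(\mathbb{R}^2,\mathbb{R})$, this yields $H_z(u_n,v_n)\to H_z(u,v)$ a.e.

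The remaining step is to upgrade this a.e.~convergence to convergence in the $X$-norm. The exponents in $(H4)$ are tuned so that $\bigl(|v|^{p(q+1)/(p+1)}\bigr)^{1+\frac1p}=|v|^{q+1}$ and $\bigl(|u|^{q(p+1)/(q+1)}\bigr)^{1+\frac1q}=|u|^{p+1}$; combined with the elementary inequality $(a+b+c)^r\le C_r(a^r+b^r+c^r)$, one gets
\[
|H_u(u_n,v_n)|^{1+\frac1p}\le C\bigl(|u_n|^{p+1}+|v_n|^{q+1}+1\bigr),\qquad |H_v(u_n,v_n)|^{1+\frac1q}\le C\bigl(|u_n|^{p+1}+|v_n|^{q+1}+1\bigr).
\]
Since $\{|u_n|^{p+1}\}$ and $\{|v_n|^{q+1}\}$ converge in $L^1(\Omega)$, they are equi-integrable, hence so are the right-hand sides above. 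Vitali's convergence theorem then delivers $H_u(u_n,v_n)\to H_u(u,v)$ in $L^{1+\frac1p}(\Omega)$ and $H_v(u_n,v_n)\to H_v(u,v)$ in $L^{1+\frac1q}(\Omega)$, i.e.~$H_z(u_n,v_n)\to H_z(u,v)$ in $X$.

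No serious obstacle is anticipated: the argument is standard. The only delicate checks are the exponent arithmetic in $(H4)$ (which is precisely matched to the dual norms of $X$) and the verification that \eqref{2211231541} provides exactly the two compact Sobolev-type embeddings needed to control $u$ in $L^{p+1}$ and $v$ in $L^{q+1}$.
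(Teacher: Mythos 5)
Your proof is correct and takes essentially the same approach as the paper: both exploit the compact embeddings $E^{\alpha}\hookrightarrow L^{p+1}(\Omega)$, $E^{2s-\alpha}\hookrightarrow L^{q+1}(\Omega)$ from Remark \ref{2211231543}, pass to a subsequence converging strongly in $L^{p+1}\times L^{q+1}$ and a.e., and then use $(H4)$ to conclude convergence of $H_z(u_n,v_n)$ in $X$. The only technical variant is the final step, where the paper extracts a dominating pair $(\Phi_1,\Phi_2)\in X^*$ via \cite[Theorem 4.9]{Brezis2010} and applies Lebesgue's dominated convergence theorem, whereas you note that strong $L^1$-convergence of $|u_n|^{p+1}$ and $|v_n|^{q+1}$ gives equi-integrability of $|H_z(u_n,v_n)|^{1+1/p}$, $|H_z(u_n,v_n)|^{1+1/q}$ and invoke Vitali; both are standard and equally valid.
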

\begin{proof}
By $(H4)$ and Remark \ref{2211231543}, $H_z$ is well defined from $\mathbf{E}_{\alpha}$ to $X^*$. Let $\{(u_j, v_j)\}$ be bounded in $\mathbf{E}_{\alpha}$, we aim to prove that $\{(H_u(u_j, v_j), H_v(u_j, v_j))\}$ has a convergent subsequence in $X$.

Since $\mathbf{E}_{\alpha}$ is a Hilbert space, up to a subsequence, there exists $(u, v)\in \mathbf{E}_{\alpha}$ such that
$(u_j, v_j) \rightharpoonup (u, v)$ weakly in $\mathbf{E}_{\alpha}$. By virtue of the compact embeddings $E^\alpha\subset L^{p+1}(\Omega)$, $E^{2s-\alpha}\subset L^{q+1}(\Omega)$ (see Remark \ref{2211231543} and \eqref{2211231541}), there holds, up to a subsequence,
\[
u_j\to u ~\text{in}~L^{p+1}(\Omega), \;\; v_j\to v ~\text{in}~L^{q+1}(\Omega)  \quad\mbox{and}\quad u_j(x)\to u(x),\;\; v_j(x)\to v(x)\;\; {\rm a.e.~in }~\Omega..
\]
Using \cite[Theorem 4.9]{Brezis2010}, there exists $(\Phi_1, \Phi_2)\in X^*$ such that $|u_j|\le \Phi_1, |v_j|\le \Phi_2$ a.e.~for all $j$. Applying $(H4)$ again, we obtain
\[
\begin{aligned}
&|H_u(u_j, v_j)|\le C\big(| \Phi_1|^p+| \Phi_2|^\frac{p(q+1)}{(p+1)}+1\big)\in L^{1+\frac1p}(\Omega),\\
&|H_v(u_j, v_j)|\le C\big(| \Phi_2|^q+| \Phi_1|^\frac{q(p+1)}{(q+1)}+1\big)\in L^{1+\frac1q}(\Omega).
\end{aligned}
\]
Let
\[
w_j=H_u(u_j, v_j),\quad w=H_u(u, v),\quad y_j=H_v(u_j, v_j),\quad y=H_v(u, v).
\]
Using the Lebesgue dominated convergence theorem, we get $(w_j, y_j)\to (w, y)$ in $X$. The proof is done.
\end{proof}
\begin{remark}\label{2302211335}
Clearly,  under the assumptions of Lemma \ref{2211151517}, $\mathcal{H}': \mathbf{E}_{\alpha}\to \mathbf{E}_{\alpha}^*$ is compact, which can be seen as follows
\[
\mathbf{E}_{\alpha}\underset{H_z}{\to} X\underset{\rm id}{\to} \mathbf{E}_{\alpha}^*.
\]
The first mapping above is compact by Lemma \ref{2211151517}, and the last one is continuous by the embedding $\mathbf{E}_{\alpha}\subset X^*$.
\end{remark}

\begin{lemma}\label{2211161003}
Let $p$, $q$ satisfy \eqref{condition}-\eqref{Condition_pq}, ${H}$ satisfy $(H2)$ and $(H4)$. Then the functional $\mathcal{E}$ satisfies $(PS)_c$ condition for any $c\in \mathbb{R}$.
\end{lemma}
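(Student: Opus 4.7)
The plan is to verify the $(PS)_c$ condition by first showing any Palais--Smale sequence $\{z_j\}=\{(u_j,v_j)\}\subset\mathbf{E}_\alpha$ is bounded, then extracting a strongly convergent subsequence via the compactness of $\mathcal{H}'$ recorded in Remark \ref{2302211335}.

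For the boundedness, the natural test functions to play against the Ambrosetti--Rabinowitz-type hypothesis $(H2)$ are $(u_j,0)$ and $(0,v_j)$, each of which has $\mathbf{E}_\alpha$-norm dominated by $\|z_j\|$. Computing from the bilinear form $B$ in \eqref{2401062223} shows
\[
\mathcal{E}(z_j)-\tfrac{1}{p+1}\langle\mathcal{E}'(z_j),(u_j,0)\rangle-\tfrac{1}{q+1}\langle\mathcal{E}'(z_j),(0,v_j)\rangle=\mu\int_\Omega A^\alpha u_j A^{2s-\alpha}v_j\,dx+\int_\Omega\Phi(u_j,v_j)\,dx,
\]
with $\mu:=1-\frac{1}{p+1}-\frac{1}{q+1}>0$ from \eqref{condition} and $\Phi:=\frac{H_u u}{p+1}+\frac{H_v v}{q+1}-H$. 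The left-hand side is $c+o(\|z_j\|)$, while $(H2)$ together with the $C^1$-regularity of $H$ forces $\Phi\ge -C$ pointwise (nonnegative on $\{|u_j|+|v_j|\ge R\}$, uniformly bounded elsewhere). Combined with the identity $\int_\Omega A^\alpha u_j A^{2s-\alpha}v_j\,dx=\mathcal{E}(z_j)+\int_\Omega H(u_j,v_j)\,dx$, this will yield $\int_\Omega H(u_j,v_j)\,dx\le C(1+\|z_j\|)$ together with a parallel bound on the bilinear piece.

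To upgrade this to a bound on $\|z_j\|$ itself, I would employ the orthogonal splitting $z_j=z_j^++z_j^-$ from Remark \ref{2302211357}. Since $Lz_j^\pm=\pm z_j^\pm$, testing $\mathcal{E}'(z_j)\to 0$ against $z_j^\pm$ gives $\|z_j^\pm\|^2=\pm\langle\mathcal{H}'(z_j),z_j^\pm\rangle+o(\|z_j^\pm\|)$. Using $(H4)$, H\"older's inequality with Young, and the continuous embeddings $E^\alpha\hookrightarrow L^{p+1}(\Omega)$, $E^{2s-\alpha}\hookrightarrow L^{q+1}(\Omega)$ from Remark \ref{2211231543}, one controls $|\langle\mathcal{H}'(z_j),z_j^\pm\rangle|$ by a power of $|u_j|_{p+1}^{p+1}+|v_j|_{q+1}^{q+1}+1$ times $\|z_j^\pm\|$. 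The previously obtained bound on $\int H(u_j,v_j)\,dx$, combined with the growth part of $(H2)$ (which forces $|u_j|_{p+1}^{p+1}+|v_j|_{q+1}^{q+1}$ to be absorbed into a lower-order function of $\|z_j\|$), then lets a standard bootstrap close the estimate and give $\|z_j\|_{\mathbf{E}_\alpha}\le C$.

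The strong-convergence stage is short. Boundedness in the Hilbert space $\mathbf{E}_\alpha$ produces a weakly convergent subsequence $z_j\rightharpoonup z$; Remark \ref{2302211335} upgrades this to $\mathcal{H}'(z_j)\to\mathcal{H}'(z)$ strongly in $\mathbf{E}_\alpha^*$; since by Remark \ref{2302211357} the operator $L$ acts as $\pm\id$ on the two factors of $\mathbf{E}_\alpha=\mathbf{E}_\alpha^+\oplus\mathbf{E}_\alpha^-$ it is an isomorphism, hence (via Riesz identification) the continuous operator $\mathbf{E}_\alpha\to\mathbf{E}_\alpha^*$ induced by $B$ is also invertible, and the identity $\mathcal{E}'(z_j)=Lz_j-\mathcal{H}'(z_j)$ together with $\mathcal{E}'(z_j)\to 0$ forces $z_j\to z$ strongly. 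The main obstacle is the bootstrap inside the boundedness step: because the quadratic part of $\mathcal{E}$ is strongly indefinite, one cannot close the estimate by the classical trick of testing $\mathcal{E}'(z_j)$ against $z_j$ itself, and the asymmetric $L^{p+1}\times L^{q+1}$-growth imposed by $(H4)$ has to be balanced against the Hilbert norms of $E^\alpha$ and $E^{2s-\alpha}$; this balancing is exactly why the range \eqref{Condition_pq} and the admissible $\alpha$ in \eqref{2211231541} appear as essential restrictions.
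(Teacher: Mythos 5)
Your proposal is correct and follows essentially the same strategy as the paper: bound $\int_\Omega H(u_j,v_j)\,dx$ (hence $|u_j|_{p+1}^{p+1}+|v_j|_{q+1}^{q+1}$) via $(H2)$, then control $\|u_j\|_{E^\alpha}$, $\|v_j\|_{E^{2s-\alpha}}$ sublinearly in those Lebesgue norms via $(H4)$ and H\"older, close the boundedness bootstrap, and deduce strong convergence from compactness of $\mathcal{H}'$ together with the linear structure $\mathcal{E}'=L-\mathcal{H}'$. The only cosmetic deviations are that the paper tests against the single combined vector $\big(\tfrac{q+1}{p+q+2}u_j,\tfrac{p+1}{p+q+2}v_j\big)$, whose coefficients sum to $1$ so the bilinear term cancels identically without your substitution $Q_j=\mathcal{E}(z_j)+\int H$, and that the paper writes the second step as explicit $\sup$-estimates for $\|u_j\|_{E^\alpha}$, $\|v_j\|_{E^{2s-\alpha}}$ rather than pairing with $z_j^\pm$, with the final convergence obtained by the same $\sup$-estimates rather than by invoking invertibility of $L$ — all equivalent.
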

\begin{proof}
Let $\{(u_j, v_j)\}\subset \mathbf{E}_{\alpha}$ be any $(PS)_c$ sequence of $\mathcal{E}$ with $c\in\mathbb{R}$. By definition, when $j$ goes to infinity,
 \begin{equation}\label{eq3.1}
\mathcal{E}(u_j,v_j)=\int_{\Omega}A^{\alpha}u_jA^{2s-\alpha}v_j\, dx-\int_{\Omega}H(u_j, v_j)\, dx=c+o(1),
 \end{equation}
 and for all $(\varphi, \psi)\in \mathbf{E}_{\alpha}$,
\begin{equation}\label{eq3.2}
\begin{aligned}
\langle \mathcal{E}'(u_j, v_j), (\varphi, \psi)\rangle& =\int_{\Omega}A^{\alpha}u_jA^{2s-\alpha}\psi \, dx+\int_{\Omega}A^{2s-\alpha}v_jA^{\alpha}\varphi\, dx-\int_{\Omega}H_u(u_j, v_j) \varphi\, dx\\
&\quad -\int_{\Omega}H_v(u_j, v_j) \psi dx\\
& =o(1)\lVert (\varphi, \psi) \lVert_{\mathbf{E}_{\alpha}}.
\end{aligned}
\end{equation}
We first claim that $\{(u_j, v_j)\}$ is bounded in $\mathbf{E}_{\alpha}$. Take first
$$(\varphi, \psi)=\left(\frac{q+1}{p+q+2}u_j, \frac{p+1}{p+q+2}v_j\right).$$
From (\ref{eq3.2}), (\ref{eq3.1}) and $(H2)$, as $j$ goes to infinity,
\[
\begin{split}
& \quad c+o(1)\|(u_j, v_j)\|_{\mathbf{E}_{\alpha}}+o(1)\\
& = \mathcal{E}(u_j,v_j)-\left\langle \mathcal{E}'(u_j, v_j), \left(\frac{q+1}{p+q+2}u_j, \frac{p+1}{p+q+2}v_j\right)\right\rangle\\
& = \frac{(p+1)(q+1)}{p+q+2}\left(\frac{1}{p+1}\int_{\Omega} H_u(u_j, v_j) u_j\, dx+ \frac{1}{q+1}\int_{\Omega}H_v(u_j, v_j) v_j\, dx-\int_{\Omega}H(u_j, v_j)\, dx\right)\\
&\quad + \frac{pq-1}{p+q+2}\int_{\Omega}H(u_j, v_j)dx\\
&\ge \frac{pq-1}{p+q+2}\int_{\Omega}H(u_j, v_j)dx -C.
\end{split}
\]
From $(H2)$ (see \cite[Lemma 1.1]{Felmer1993}), there exist $C_1, C_2>0$ such that
\[
H(u,v)\ge C_1(|u|^{p+1}+|v|^{q+1})-C_2, \quad \forall\; (u, v)\in \mathbf{E}_{\alpha}.
\]
Thus,
\begin{equation}\label{2210101503}
|u_j|_{p+1}^{p+1}+|v_j|_{q+1}^{q+1} \leq C+ o(1)\|(u_j, v_j)\|_{\mathbf{E}_{\alpha}}.
\end{equation}
On the other hand, it follows from \eqref{eq3.2} with $\psi=0$ that
\begin{equation}\label{2211151058}
 \int_{\Omega}A^{2s-\alpha}v_jA^{\alpha}\varphi dx = \int_{\Omega}H_u(u_j, v_j) \varphi  dx+o(1)\|\varphi\|_{E^\alpha},
\end{equation}
and if $\varphi=0$,
\begin{equation}\label{2211151618}
 \int_{\Omega}A^{\alpha}u_jA^{2s-\alpha}\psi dx  = \int_{\Omega}H_v(u_j, v_j) \psi dx+o(1)\|\psi\|_{E^{2s-\alpha}}.
\end{equation}
Applying $(H4)$, H\"older inequality and $E^{\alpha}\subset L^{p+1}(\Omega)$, there holds
\begin{equation}\label{2211151059}
\int_{\Omega}\left|H_u(u_j, v_j) \varphi \right| dx\le C\left( |u_j|_{p+1}^p+|v_j|_{q+1}^{p(q+1)/(p+1)} + 1\right)\|\varphi\|_{E^\alpha}.
\end{equation}
Since $A^{2s-\alpha}$ is an isomorphism from $E^{2s-\alpha}$ to $L^2(\Omega)$ and due to \eqref{2211151058}, \eqref{2211151059}, we have
\begin{equation}\label{2210101504}
\begin{aligned}
\|v_j\|_{E^{2s-\alpha}}& = |A^{2s-\alpha}v_j |_2=  \underset{ \|\varphi\|_{E^{\alpha}}=1}{\sup} \left| \int_{\Omega}A^{2s-\alpha}v_j A^{\alpha}\varphi dx  \right| \le C\left( |u_j|_{p+1}^p+|v_j|_{q+1}^{p(q+1)/(p+1)}+1\right).
\end{aligned}
\end{equation}
Similarly, we obtain
\begin{equation}\label{2211151112}
\begin{aligned}
\|u_j\|_{E^{\alpha}} \le C\left( |v_j|_{q+1}^q+|u_j|_{p+1}^{q(p+1)/(q+1)}+1\right).
\end{aligned}
\end{equation}
Combining \eqref{2210101503} with \eqref{2210101504} and \eqref{2211151112}, it is clear that $\{(u_j, v_j)\}$ is bounded in $\mathbf{E}_{\alpha}$. Therefore, up to a subsequence, there is $(u, v)\in \mathbf{E}_{\alpha}$ such that $(u_j, v_j) \rightharpoonup (u, v)$ weakly in $\mathbf{E}_{\alpha}$. Let
\[
w_j=H_u(u_j, v_j),\quad w=H_u(u, v),\quad y_j=H_v(u_j, v_j),\quad y=H_v(u, v).
\]
By Lemma \ref{2211151517}, up to a subsequence, $(w_j, y_j)\to (w, y)$ in $X$. In view of \eqref{2211151058} and \eqref{2211151618}, let $j$ tend to $\infty$, there holds
\begin{equation}\label{2211151817}
\begin{aligned}
 \int_{\Omega}A^{2s-\alpha}vA^{\alpha}\varphi dx = \int_{\Omega}w \varphi  dx,\quad \int_{\Omega}A^{\alpha}v A^{2s-\alpha}\psi dx  = \int_{\Omega}y \psi dx.
 \end{aligned}
 \end{equation}
Thanks to \eqref{2211151058}, \eqref{2211151817}, H\"older inequality and $E^{\alpha}\subset L^{p+1}(\Omega)$, we arrive at
 \[
 \begin{aligned}
 \|v_j-v\|_{E^{2s-\alpha}}&=\underset{\|\varphi\|_{E^{\alpha}}=1}{\sup}\int_{\Omega}A^{2s-\alpha}(v_j-v)A^{\alpha}\varphi dx \\
 &= \underset{\|\varphi\|_{E^{\alpha}}=1}{\sup}\left(\int_{\Omega}(w_j-w) \varphi  dx+o(1)\|\varphi\|_{E^\alpha}\right)\\
 &\le C\left(|w_j-w|_{1+\frac1p}+o(1)\right)
 \end{aligned}
 \]
which implies $v_j\to v$ in $E^{2s-\alpha}$. In the same way, we obtain $u_j\to u$ in $E^{\alpha}$.
\end{proof}

In the sequel, we shall verify that $\mathcal{E}$ possesses the linking structure stated in Theorem \ref{2210091640}.
\begin{lemma}\label{2211161004}
Let $p$, $q$ satisfy \eqref{condition}-\eqref{Condition_pq}, $H$ satisfy $(H1)$-$(H4)$. Then there exist two linear, bounded and invertible operators $B_1$, $B_2: \mathbf{E}_{\alpha}\to \mathbf{E}_{\alpha}$ such that for all $\tau\ge 0$, $\widehat{B}_{\tau}=P_2 B_1^{-1}e^{\tau L}B_2 : \mathbf{E}_{\alpha}^-\to \mathbf{E}_{\alpha}^-$ is invertible, where $P_2$ is the projection of $\mathbf{E}_{\alpha}$ onto $\mathbf{E}_{\alpha}^-$. Moreover, let 
$$e_1=\Big(\lambda_1^{-\frac{\alpha}{2s}}\varphi_1, \lambda_1^{\frac{-2s+\alpha}{2s}}\varphi_1\Big)\in \mathbf{E}_{\alpha}^+,$$ 
then there exist constants $R_2>\rho>0$, $R_1>\rho/\|B_1^{-1}B_2 e_1\|_{\mathbf{E}_{\alpha}}$ and $\sigma>0$ such that
$$
(G1)\;\; \mathcal{E}(z)\ge \sigma>0  \;\mbox{ on } S_\rho^+; \quad (G2)\;\; \mathcal{E}(z)\le 0 \;\mbox{ on } \partial Q$$
where $S_\rho^+:=\{B_1z^+: z^+\in \mathbf{E}^+_{\alpha}, \|z^+\|=\rho\}$
and
\[
Q:=\big\{B_2(te_1+z^-): 0\le t\le R_1, \, z^-\in \mathbf{E}_{\alpha}^-,\,  \|z^-\|_{\mathbf{E}_{\alpha}}\le R_2\big\}.
\]
\end{lemma}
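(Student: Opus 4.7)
The plan is to take the simplest possible choice $B_1 = B_2 = \mathrm{id}_{\mathbf{E}_\alpha}$, so that $\widehat{B}_\tau = P_2\,e^{\tau L}$. By Remark \ref{2302211357}, $L$ preserves the splitting $\mathbf{E}_\alpha = \mathbf{E}_\alpha^+ \oplus \mathbf{E}_\alpha^-$ with $L|_{\mathbf{E}_\alpha^\pm} = \pm\mathrm{id}$, so $e^{\tau L}|_{\mathbf{E}_\alpha^-} = e^{-\tau}\mathrm{id}$, hence $\widehat{B}_\tau|_{\mathbf{E}_\alpha^-} = e^{-\tau}\mathrm{id}$ is invertible for every $\tau\ge 0$. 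Using $v^\pm = \pm A^{2\alpha-2s} u^\pm$ on $\mathbf{E}_\alpha^\pm$ yields the decoupling identity
\[
\int_\Omega A^\alpha u\, A^{2s-\alpha} v\, dx = \tfrac{1}{2}\|z^+\|_{\mathbf{E}_\alpha}^2 - \tfrac{1}{2}\|z^-\|_{\mathbf{E}_\alpha}^2, \qquad z = z^++z^-,
\]
so on $\mathbf{E}_\alpha^\pm$ the quadratic part of $\mathcal{E}$ is $\pm\tfrac{1}{2}\|z\|_{\mathbf{E}_\alpha}^2$.

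For $(G1)$, I would combine $(H3)$ on $\{|u|+|v|\le r\}$ with $(H4)$ plus a Chebyshev bound on the complementary set to obtain, for $\|z\|_{\mathbf{E}_\alpha}$ bounded, the global estimate $\int_\Omega H(u,v)\,dx \le C\bigl(|u|_{p+1}^{p+1}+|v|_{q+1}^{q+1}\bigr)$. The continuous embeddings $E^\alpha\hookrightarrow L^{p+1}(\Omega)$ and $E^{2s-\alpha}\hookrightarrow L^{q+1}(\Omega)$ from Remark \ref{2211231543} (whose validity is encoded in \eqref{2211231541}) then give, on $S_\rho^+$,
\[
\mathcal{E}(z) \ge \tfrac{\rho^2}{2} - C\bigl(\rho^{p+1}+\rho^{q+1}\bigr),
\]
which is bounded below by some $\sigma>0$ once $\rho$ is chosen sufficiently small.

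For $(G2)$, I would dissect $\partial Q$ into three faces. On the face $t=0$, $(H1)$ gives immediately $\mathcal{E}(z^-)\le -\tfrac{1}{2}\|z^-\|_{\mathbf{E}_\alpha}^2\le 0$. On the other two faces ($t=R_1$ and $\|z^-\|_{\mathbf{E}_\alpha}=R_2$), I would invoke the AR-type consequence of $(H2)$ from \cite[Lemma 1.1]{Felmer1993}, namely $H(u,v)\ge C_1(|u|^{p+1}+|v|^{q+1}) - C_2$. Decomposing $u = t\lambda_1^{-\alpha/2s}\varphi_1 + u^-$ and $v = t\lambda_1^{(\alpha-2s)/2s}\varphi_1 + v^-$ with $(u^-,v^-)=z^-$, I would derive
\[
\mathcal{E}(te_1+z^-) \le \tfrac{t^2}{2}\|e_1\|_{\mathbf{E}_\alpha}^2 - \tfrac{1}{2}\|z^-\|_{\mathbf{E}_\alpha}^2 - c\,t^{\max\{p,q\}+1} + C\bigl(1 + \|z^-\|_{\mathbf{E}_\alpha}^{\max\{p+1,q+1\}}\bigr).
\]
Since \eqref{condition} forces $pq>1$, we have $\max\{p,q\}+1>2$; choosing first $R_1$ so that the superquadratic $t$-term beats $\tfrac{t^2}{2}\|e_1\|^2$ on $\{t=R_1\}$, and then $R_2$ large enough for the quadratic $\tfrac{1}{2}\|z^-\|^2$ to dominate on $\{\|z^-\|=R_2\}$, yields $\mathcal{E}\le 0$ on $\partial Q$.

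The main obstacle will be making the coercive term $c\,t^{\max\{p,q\}+1}$ uniform in $z^-$: cancellation between $t\varphi_1$ and the $u^-$-component could a priori weaken the lower bound on $|u|_{p+1}$ (or on $|v|_{q+1}$). I would handle this by noting that on the ball $\|z^-\|_{\mathbf{E}_\alpha}\le R_2$ the $L^{p+1}$-norm of $u^-$ and the $L^{q+1}$-norm of $v^-$ are uniformly bounded, so once $t$ exceeds a fixed multiple of these bounds the triangle inequality gives $|u|_{p+1}\ge \tfrac{1}{2}c_1 t$ or $|v|_{q+1}\ge \tfrac{1}{2}c_2 t$, whichever matches $\max\{p,q\}$, producing the required superquadratic behaviour in $t$.
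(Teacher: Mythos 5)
Your choice $B_1=B_2=\mathrm{id}$ does not work under the stated hypotheses, and this is the central gap. Condition \eqref{condition} only gives $\frac1{p+1}+\frac1{q+1}<1$, i.e.\ $pq>1$; it does \emph{not} give $\min\{p,q\}>1$ (e.g.\ $p=\tfrac12$, $q=4$ is admissible for suitable $N,s$). If, say, $p+1<2$, then your $(G1)$ estimate reads $\mathcal{E}(z)\ge \tfrac{\rho^2}{2}-C\rho^{p+1}-C\rho^{q+1}$ with $\rho^{p+1}\gg\rho^2$ as $\rho\to0$, so no small $\rho$ yields a positive lower bound; likewise in $(G2)$ the coercive term coming from the sublinear component is $t^{\min\{p,q\}+1}$, which need not beat the quadratic term $t^2$ (and you cannot simply claim the exponent is $\max\{p,q\}+1$, since which component provides coercivity is dictated by where the cancellation with $z^-$ occurs, not by which exponent is larger). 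This is precisely why the paper takes $B_1(u,v)=(\rho^{\mu-1}u,\rho^{\nu-1}v)$ and $B_2(u,v)=(R_1^{\mu-1}u,R_1^{\nu-1}v)$ with $\mu,\nu\ge1$ chosen so that $\frac1{p+1}<\frac{\mu}{\mu+\nu}$ and $\frac1{q+1}<\frac{\nu}{\mu+\nu}$: the quadratic part then scales like $\rho^{\mu+\nu}$ while the nonlinear terms scale like $\rho^{\mu(p+1)}$ and $\rho^{\nu(q+1)}$, and the choice of $\mu,\nu$ makes both exponents exceed $\mu+\nu$ even when one of $p,q$ is below $1$. The whole point of allowing nontrivial $B_1,B_2$ in Felmer's linking theorem is to accommodate this anisotropic rescaling.

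Your treatment of the cancellation in $(G2)$ is also insufficient as stated. The triangle-inequality argument ($|u|_{p+1}\ge c_1t-|u^-|_{p+1}$, useful once $t$ dominates a fixed multiple of $\|z^-\|$) is not uniform over the faces of $\partial Q$: on $\{t=R_1\}$ the constant in $|u^-|_{p+1}\le C\|z^-\|_{\mathbf{E}_\alpha}\le CR_2$ may exceed $c_1$ even when $R_2$ is comparable to $R_1$, so the lower bound can degenerate for every $t$ in range. The paper's resolution is structural rather than quantitative: write $u^-=r\varphi_1+w$ with $w\perp\varphi_1$ in $L^2$ and use that $v^-=-A^{-2s+2\alpha}u^-$ carries the \emph{opposite}-signed $\varphi_1$-coefficient $-\lambda_1^{(\alpha-s)/s}r$. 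Hence if $r\ge0$ the $\varphi_1$-projection of the first component is at least $\lambda_1^{-\alpha/2s}t$, and if $r<0$ the $\varphi_1$-projection of the second component is at least $\lambda_1^{(\alpha-2s)/2s}t$; either way H\"older against $\varphi_1$ gives a lower bound on $|u|_{p+1}$ or on $|v|_{q+1}$ of order $t$ that is uniform in $z^-$, and the AR-type bound from \cite[Lemma 1.1]{Felmer1993} then produces a coercive term $t^{p+1}$ or $t^{q+1}$ with the correct $R_1^{(\cdot)(\mu-1)}$ or $R_1^{(\cdot)(\nu-1)}$ prefactor. You should adopt both the anisotropic operators and this projection argument to close the proof.
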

\begin{proof}
Since $\frac1{p+1}+\frac{1}{q+1}<1$, we can select $\mu, \nu\ge 1$ satisfying
\begin{equation}\label{2302211611}
\frac{1}{p+1}<\frac{\mu}{\mu+\nu},\quad \frac{1}{q+1}<\frac{\nu}{\mu+\nu}.
\end{equation}
Let $R_1>1$, $0<\rho<1$ and define
\[
B_1(u, v)=(\rho^{\mu-1}u, \rho^{\nu-1}v), \;\; B_2(u, v)=(R_1^{\mu-1}u, R_1^{\nu-1}v),\quad \forall\; (u, v)\in \mathbf{E}_{\alpha},
\]
Since $L|_{ \mathbf{E}_{\alpha}^-}=-id_{\mathbf{E}_{\alpha}^-}$, $\widehat{B}_\tau$ is invertible in $\mathbf{E}_{\alpha}^-$. According to the definitions of $B_1$ and $B_2$, we set
\[
S_\rho^+ =\big\{(\rho^{\mu-1}u^+, \rho^{\nu-1}v^+): \|(u^+, v^+)\|_{\mathbf{E}_{\alpha}}=\rho,\, z^+=(u^+, v^+)\in \mathbf{E}^+_{\alpha}\big\},
\]
and
\[
\begin{aligned}
Q=\Big\{t(&R_1^{\mu-1}\widetilde{\varphi}_1, R_1^{\nu-1}\lambda_1^{\frac{-s+\alpha}{s}}\widetilde{\varphi}_1)+(R_1^{\mu-1}u^-, R_1^{\nu-1}v^-): 0\le t\le R_1, \\
&\|(u^-, v^-)\|_{\mathbf{E}_{\alpha}}\le R_2,\, z^-=(u^-, v^-)\in \mathbf{E}^-_{\alpha}\Big\},
\end{aligned}
\]
where $\widetilde{\varphi}_1=\lambda_1^{-\frac{\alpha}{2s}}\varphi_1$.

\smallskip
\noindent
{\bf Verification of $(G1)$}. For any $(\rho^{\mu-1}u^+, \rho^{\nu-1}v^+)\in S_\rho^+$, by $(H3)$ and $\mathbf{E}_{\alpha}\subset L^{p+1}(\Omega)\times L^{q+1}(\Omega)$, when $\rho$ is small, by \eqref{2302211611}, there exist $C_1, C_2 >0$ such that
\[
\begin{aligned}
\mathcal{E}(\rho^{\mu-1}u^+, \rho^{\nu-1}v^+)\ge \frac12 &\rho^{\mu+\nu-2}\|z^+\|^2_{\mathbf{E}_{\alpha}}-C_1\rho^{(\mu-1)(p+1)}\int_{\Omega}|u^+|^{p+1} dx -C_1\rho^{(\nu-1)(q+1)} \int_{\Omega}|v^+|^{q+1} dx\\
\ge \frac12 &\rho^{\mu+\nu-2}\|z^+\|^2_{\mathbf{E}_{\alpha}}-C_2\rho^{(\mu-1)(p+1)}\|z^+\|^{p+1}_{\mathbf{E}_{\alpha}}-C_2\rho^{(\nu-1)(q+1)} \|z^+\|_{\mathbf{E}_{\alpha}}^{q+1}\\
\ge \frac12 &\rho^{\mu+\nu}-C_2\rho^{\mu (p+1)}-C_2 \rho^{\nu (q+1)},
\end{aligned}
\]
which yields that there exist $\sigma,\, \rho>0$ satisfying $\mathcal{E}(\rho^{\mu-1}u^+, \rho^{\nu-1}v^+)\ge \sigma > 0$.

\vspace{0.5em}
\noindent
{\bf Verification of $(G2)$}. We proceed by the following three steps.

\smallskip
{\it Step 1}. For any $(R_1^{\mu-1}u^-, R_1^{\nu-1}v^-)\in Q\cap \{t=0\}$, it follows from $(H1)$ that
\[
\begin{aligned}
\mathcal{E}(R_1^{\mu-1}u^-, R_1^{\nu-1}v^-)\le R_1^{\mu+\nu-2}\int_{\Omega}A^{\alpha}u^- A^{2s-\alpha}v^-\, dx \le -R_1^{\mu+\nu-2}\int_{\Omega}|A^{\alpha}u^-|^2 dx \le 0.
\end{aligned}
\]

{\it Step 2}. For any $\widetilde{z}_{R_1}=(R_1^{\mu}\widetilde{\varphi}_1 +R_1^{\mu-1}u^-,\, R_1^{\nu}\lambda_1^{\frac{-s+\alpha}{s}}\widetilde{\varphi}_1 +R_1^{\nu-1}v^-)\in Q\cap \{t=R_1\}$, we write $u^-=r\varphi_1+w$ where $w\in E^{\alpha}$ is orthogonal to $\varphi_1$ in $L^2(\Omega)$.
\smallskip

Let $r\ge 0$. By direct computations,
\begin{equation}\label{2211152322}
r+\lambda_1^{-\frac{\alpha}{2s}}t=\int_{\Omega}(t\widetilde{\varphi}_1+ u^-){\varphi}_1 dx\le |t\widetilde{\varphi}_1+ u^-|_{p+1}|\varphi_1|_\frac{p+1}{p}\le C|t\widetilde{\varphi}_1+ u^-|_{p+1}.
\end{equation}
Set $\widetilde{z}_t=t(R_1^{\mu-1}\widetilde{\varphi}_1, R_1^{\nu-1}\lambda_1^{\frac{-s+\alpha}{s}}\widetilde{\varphi}_1)+(R_1^{\mu-1}u^-, R_1^{\nu-1}v^-)$ with $t\ge 0$.
Using $(H2)$ (see \cite[Lemma 1.1]{Felmer1993}) and \eqref{2211152322}, we have
\begin{equation}\label{2210112040}
\begin{aligned}
\mathcal{H}(\widetilde{z}_t)& \ge C_1R_1^{(p+1)(\mu-1)}\int_{\Omega}|t\widetilde{\varphi}_1+u^-|^{p+1} dx  + C_1R_1^{(q+1)(\nu-1)}\int_{\Omega}|t\lambda_1^{\frac{-s+\alpha}{s}}\widetilde{\varphi}_1 +v^-|^{q+1} dx-C_2\\
& \ge C_3 R_1^{(p+1)(\mu-1)}(r+\lambda_1^{-\frac{\alpha}{2s}}t)^{p+1}-C_2\\
 & \ge C_4 R_1^{(p+1)(\mu-1)}t^{p+1}-C_2.
\end{aligned}
\end{equation}
\par
If $r<0$, let $v^-=-A^{-2s+\alpha}\circ A^{\alpha}u^- = -A^{-2s+2\alpha} u^-$,
\[
\begin{aligned}
\int_{\Omega} v^- \varphi_1dx = \int_{\Omega}(-A^{-2s+2\alpha}u^-) \varphi_1 dx
& = \int_{\Omega}-A^{-2s+2\alpha} (r\varphi_1+w) \varphi_1 dx\\
&= -\lambda_1^{\frac{-s+\alpha}{s}}r-\int_{\Omega}\varphi_1A^{-2s+2\alpha} w dx\\
&= -\lambda_1^{\frac{-s+\alpha}{s}}r.
\end{aligned}
\]
Thus,
\[
\lambda_1^{\frac{-s+\alpha}{s}}(\lambda_1^{-\frac{\alpha}{2s}}t-r)=\int_{\Omega}(t\lambda_1^{\frac{-s+\alpha}{s}}\widetilde{\varphi}_1 +v^-)\varphi_1\, dx\le |t\lambda_1^{\frac{-s+\alpha}{s}}\widetilde{\varphi}_1 +v^-|_{q+1}|\varphi_1|_\frac{q+1}{q}.
\]
Hence, similar to \eqref{2210112040}, one has
\begin{equation}\label{2210112041}
\mathcal{H}(\widetilde{z}_t)\ge C_5 R_1^{(q+1)(\nu-1)}t^{q+1}-C_6.
\end{equation}
Therefore, either \eqref{2210112040} or \eqref{2210112041} holds, which yields
\[
\mathcal{E}(\widetilde{z}_{R_1})\le {R_1^{\mu+\nu}}- \frac{R_1^{\mu+\nu-2}}{2}\|z^-\|_{\mathbf{E}_{\alpha}}^2-C_4 R_1^{(p+1)\mu}+C_2,
\]
or
\[
\mathcal{E}(\widetilde{z}_{R_1})\le {R_1^{\mu+\nu}}- \frac{R_1^{\mu+\nu-2}}{2}\|z^-\|_{\mathbf{E}_{\alpha}}^2-C_5 R_1^{(q+1)\nu}+C_6.
\]
In both cases, we can choose $R_2=2{R_1}$ large so that $\mathcal{E}(\widetilde{z}_{R_1})\le 0$.

\smallskip
{\it Step 3}. For any $\|(u^-, v^-)\|_{\mathbf{E}_{\alpha}}=R_2$, there holds
$$\widetilde{z}_t=t(R_1^{\mu-1}\widetilde{\varphi}_1, R_1^{\nu-1}\lambda_1^{\frac{-s+\alpha}{s}}\widetilde{\varphi}_1)+(R_1^{\mu-1}u^-, R_1^{\nu-1}v^-)\in Q\cap \{\|z^-\|_{\mathbf{E}_{\alpha}}=R_2\}.$$
Since either \eqref{2210112040} or \eqref{2210112041} holds, we get either
\[
\mathcal{E}(\widetilde{z}_t)\le {R_1^{\mu+\nu-2}t^2}- \frac{R_1^{\mu+\nu-2}}{2}R_2^2-C_4 R_1^{(p+1)(\mu-1)}t^{p+1}+C_2,
\]
or
\[
\mathcal{E}(\widetilde{z}_t)\le {R_1^{\mu+\nu-2}t^2}- \frac{R_1^{\mu+\nu-2}}{2}R_2^2-C_5 R_1^{(q+1)(\nu-1)}t^{q+1}+C_6.
\]
Choosing $R_2=2{R_1}$ large, we get $\mathcal{E}(\widetilde{z}_t)\le 0$ for $0\le t \le R_1$.
\end{proof}

\begin{lemma}\label{2407281607}
Under the assumption $(H4)$ with subcritical $(p, q)$, any weak solution $(u, v)\in X^*$ of \eqref{main} is a classical solution.
\end{lemma}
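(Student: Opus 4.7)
The argument is a standard Brezis--Kato-type bootstrap, based on the $L^r$-regularity recorded in Proposition~\ref{2211162013}. Starting from $(u,v)\in X^* = L^{p+1}(\Omega)\times L^{q+1}(\Omega)$, I would first use $(H4)$ together with H\"older's inequality to verify that
\[
H_u(u,v)\in L^{1+\frac{1}{p}}(\Omega),\qquad H_v(u,v)\in L^{1+\frac{1}{q}}(\Omega),
\]
since $|u|^p\in L^{\frac{p+1}{p}}$ while $|v|^{\frac{p(q+1)}{p+1}}\in L^{\frac{p+1}{p}}$ by the matched exponents in $(H4)$, and symmetrically for $H_v$. Because $(u,v)$ is a weak solution of \eqref{main}, the equations $(-\Delta)^s u=H_v(u,v)$ and $(-\Delta)^s v=H_u(u,v)$ can be treated scalar-by-scalar via Proposition~\ref{2211162013}.

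Next I would iterate. Denote by $a_n,b_n$ the exponents giving $u\in L^{a_n}$, $v\in L^{b_n}$, with $a_0=p+1$, $b_0=q+1$. Applying $(H4)$ and H\"older,
\[
H_v(u,v)\in L^{c_n},\qquad c_n=\min\!\Big(\tfrac{b_n}{q},\ \tfrac{a_n(q+1)}{q(p+1)}\Big),
\]
and analogously $H_u(u,v)\in L^{d_n}$ with $d_n=\min\!\big(\tfrac{a_n}{p},\tfrac{b_n(p+1)}{p(q+1)}\big)$. If $c_n<\tfrac{N}{2s}$, Proposition~\ref{2211162013}(i) yields $u\in L^{a_{n+1}}$ with $\tfrac{1}{a_{n+1}}=\tfrac{1}{c_n}-\tfrac{2s}{N}$; in the borderline case $c_n=\tfrac{N}{2s}$, part (iv) places $u$ in every $L^\gamma(\Omega)$, and once $c_n>\tfrac{N}{2s}$, part (iii) gives $u\in L^\infty(\Omega)$. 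The same holds for $v$ via $d_n$.

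The only non-routine step is to show that this alternating iteration strictly improves the integrability and terminates in finitely many rounds. A direct computation at the initial stage gives
\[
\frac{1}{a_1}=\frac{q}{q+1}-\frac{2s}{N},\qquad \frac{1}{b_1}=\frac{p}{p+1}-\frac{2s}{N},
\]
and the inequalities $\tfrac{1}{a_1}<\tfrac{1}{a_0}$, $\tfrac{1}{b_1}<\tfrac{1}{b_0}$ are equivalent to $\tfrac{1}{p+1}+\tfrac{1}{q+1}>\tfrac{N-2s}{N}$, which is exactly the subcritical hypothesis \eqref{condition}. An induction, keeping track of which term realises the minimum defining $c_n,d_n$, shows that each round produces a decrement in $\tfrac{1}{a_n}+\tfrac{1}{b_n}$ bounded below by a fixed positive constant, until one of the effective exponents $c_n,d_n$ crosses $\tfrac{N}{2s}$. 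Hence in finitely many steps one reaches $u,v\in L^\infty(\Omega)$, which is precisely the paper's notion of classical solution. The main obstacle is the bookkeeping through the $\min(\cdot,\cdot)$ defining $c_n,d_n$; once that is organised, the subcritical condition supplies the quantitative gain at every step and no further ideas are needed.
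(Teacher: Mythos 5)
Your proposal follows essentially the same route as the paper: feed $(H4)$ and H\"older into the $L^r$-theory of Proposition \ref{2211162013}, and bootstrap the pair of exponents alternately until one of the source terms lands above $\frac{N}{2s}$, at which point $u,v\in L^\infty(\Omega)$ and the solution is classical by definition. Your initial computation ($1/a_1=\frac{q}{q+1}-\frac{2s}{N}$, with the gain over $1/a_0$ equivalent to \eqref{condition}) is exactly the paper's inequality \eqref{2407281559}.

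The one place where you and the paper diverge is the termination argument, which is also the only step with real content. You assert that each round decreases $\frac{1}{a_n}+\frac{1}{b_n}$ by a fixed positive amount, to be checked "by induction through the branches of the $\min$"; this is not obviously true branch by branch (for instance, when the branch $c_n=b_n/q$, $d_n=a_n/p$ is active the per-step decrement is $\frac{1-p}{a_n}+\frac{1-q}{b_n}+\frac{4s}{N}$, whose sign and uniform lower bound require an argument depending on the sizes of $p$ and $q$), so as written this is an unproven claim rather than bookkeeping. The paper avoids this entirely: it first notes that the recursion maps are monotone, so $r_1>r_0$, $t_1>t_0$ propagates to strict monotonicity of the whole sequence; then, arguing by contradiction, if the sequences never cross $\frac{N}{2s}$ they converge to a fixed point of the recursion, which can be computed explicitly ($r=\frac{N}{2s}\frac{pq-1}{pq+q}$, $t=\frac{N}{2s}\frac{pq-1}{pq+p}$) and shown to lie below the starting exponent $r_0=1+\frac1q$ under subcriticality — a contradiction with monotonicity. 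I would recommend replacing your uniform-decrement claim by this monotonicity-plus-fixed-point argument (or actually carrying out the branch analysis); otherwise the proof is incomplete at its key step. You should also record, as the paper does, the easy degenerate case where $p$ or $q\le\frac{2s}{N-2s}$, in which the very first exponent $1+\frac1q$ (or $1+\frac1p$) already reaches $\frac{N}{2s}$ and no iteration is needed.
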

\begin{proof} Since $(u, v)\in X^*$ and $(H4)$, one has $H_z(u, v)\in X$, so
\begin{equation}\label{2407281353}
(u, v)\in\mathcal{W}^{2s, 1+\frac{1}{q}}(\Omega)\times \mathcal{W}^{2s, 1+\frac{1}{p}}(\Omega).
\end{equation}
Consider first $p, q>\frac{2s}{N-2s}$, Proposition \ref{2302102144} yields then that $u\in L^{\frac{N(q+1)}{Nq-2s(q+1)}}(\Omega)$ and $v\in L^{\frac{N(p+1)}{Np-2s(p+1)}}(\Omega)$. The assumption $(H4)$ deduces that $(u, v)\in \mathcal{W}^{2s, r_1}(\Omega)\times \mathcal{W}^{2s, t_1}(\Omega)$ where
\[
r_1=\min\left\{\frac{N(p+1)}{[Nq-2s(p+1)]q}, \frac{N(q+1)^2}{[Nq-2s(q+1)](p+1)q}\right\},
\]
and
\[
t_1=\min\left\{\frac{N(q+1)}{[Nq-2s(q+1)]p}, \frac{N(p+1)^2}{[Np-2s(p+1)](q+1)p}\right\}.
\]
Since $(p, q)$ is subcritical, we have
\begin{equation}\label{2407281559}
r_1>r_0:=1+\frac{1}{q}, \quad t_1>t_0:=1+\frac{1}{p}.
\end{equation}
In the same way, whenever $r_n, t_n < \frac{N}{2s}$, let
\[
r_{n+1}=\min\left\{\frac{Nt_n}{(N-2st_n)q}, \frac{Nr_n(q+1)}{(N-2sr_n)(p+1)q}\right\}
\]
and
\[
t_{n+1}=\min\left\{\frac{Nr_n}{(N-2sr_n)p}, \frac{Nt_n(p+1)}{(N-2st_n)(q+1)p}\right\},
\]
then $(u, v)\in \mathcal{W}^{2s, r_{n+1}}(\Omega)\times \mathcal{W}^{2s, t_{n+1}}(\Omega)$, and $r_n < r_{n+1}, t_n < t_{n+1}$ due to \eqref{2407281559}. We claim that  
\begin{align}
\label{newclaim1}
\mbox{there exists $n$ such that $(u, v)\in \mathcal{W}^{2s, r_n}(\Omega)\times \mathcal{W}^{2s, t_n}(\Omega)$ with $\min(r_n, t_n)>\frac{N}{2s}$.}
\end{align}
This claim can be proved by contradiction. Assume for example $r_n\to r \le \frac{N}{2s}$, $t_n\to t \le \frac{N}{2s}$, other cases can be ruled out similarly. We can check that
$$r=\frac{N}{2s}\frac{pq-1}{pq+q}, \quad t=\frac{N}{2s}\frac{pq-1}{pq+p},$$
which contradicts $r>r_0$ since $(p, q)$ is subcritical. 

Therefore, after finite number of iterations, \eqref{newclaim1} holds true. Then $u, v\in C^s(\overline{\Omega})$, thanks again to Proposition \ref{2302102144} and Lemma \ref{2402061816}.

 Other situations with $p$ or $q\le \frac{2s}{N-2s}$ can be handled easily using Proposition \ref{2302102144} and Lemma \ref{2402061816}, so we omit the details.
\end{proof}

\begin{proof}[\bf Proof of Theorem \ref{220516} completed] By Lemmas \ref{2211161003}-\ref{2211161004}, Remarks \ref{2302211357}, \ref{2302211335}, applying Theorem \ref{2210091640} and Proposition \ref{2302302237}, there exists a $L^1$-weak solution $(u, v)\in X^*$. By Lemma \ref{2407281607}, $(u, v)$ is a classical solution, so the proof is completed.
\end{proof}

\section{Dual method and proof of Theorem \ref{2211092215}}\label{2306141749}
This section is devoted to the proof of Theorem \ref{2211092215}. We state first some basic properties for $H$, $H^*$ and $\mathcal{H}$, $\mathcal{H}^*$. As
\begin{equation}\label{2211052021}
H(u, v)= \int_0^1 H_u(tu, tv)u  + H_v(tu, tv)v dt, 
\end{equation}
there holds readily
\begin{lemma}\label{2211060953}
For $H$ satisfying $(H5)$, there are positive constants $A_1$, $A_2$ such that
\[
A_1 \left(|u|^{p+1}+ |v|^{q+1}\right)\le {H}(u, v)\le A_2\big(|u|^{p+1}+ |v|^{q+1}\big), \quad \forall\; (u, v) \in \R^2.
\]
\end{lemma}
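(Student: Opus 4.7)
The plan is to derive both bounds from the fundamental theorem of calculus identity \eqref{2211052021}, namely
\[
H(u,v) = \int_0^1 \frac{H_u(tu,tv)(tu) + H_v(tu,tv)(tv)}{t}\, dt,
\]
which is obtained by rewriting the integrand so that the pointwise bounds in $(H5)$ apply directly with argument $(tu,tv)$ in place of $(u,v)$.

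For the lower bound I would feed the left inequalities of $(H5)$ into the integrand, getting
\[
H(u,v) \ge C_1\int_0^1 \bigl(t^{p}|u|^{p+1} + t^{q}|v|^{q+1}\bigr)\, dt = \frac{C_1}{p+1}|u|^{p+1}+\frac{C_1}{q+1}|v|^{q+1},
\]
which supplies $A_1$. For the upper bound I would use the right inequalities in $(H5)$ to bound the integrand by $C_2\bigl(t^{p}|u|^{p+1} + t^{q}|v|^{q+1} + 2 t^{\alpha+\beta-1}|u|^{\alpha}|v|^{\beta}\bigr)$; since $\alpha, \beta > 1$ gives $\alpha+\beta>2>1$, the integral over $[0,1]$ converges and yields a pointwise bound involving $|u|^{p+1}+|v|^{q+1}+|u|^\alpha |v|^\beta$.

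The final step, and the only place where the specific exponent relation matters, is to absorb the mixed term. The condition \eqref{2402051803}, $\tfrac{\alpha}{p+1}+\tfrac{\beta}{q+1}=1$, is precisely the statement that $\tfrac{p+1}{\alpha}$ and $\tfrac{q+1}{\beta}$ are conjugate H\"older exponents (both greater than $1$ since $\alpha,\beta>1$). Thus Young's inequality gives
\[
|u|^{\alpha}|v|^{\beta} \le \frac{\alpha}{p+1}|u|^{p+1} + \frac{\beta}{q+1}|v|^{q+1},
\]
which supplies $A_2$. There is no real obstacle; the content of the lemma is that the algebraic compatibility \eqref{2402051803} in $(H5)$ is exactly what is needed for the coupling term to be dominated by the pure powers, and integrating along the radial ray converts the derivative bounds into bounds on $H$ itself.
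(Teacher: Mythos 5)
Your proposal is correct and follows exactly the route the paper intends: the paper derives the lemma "readily" from the radial integral identity \eqref{2211052021}, and you have simply filled in the details, namely feeding $(H5)$ at $(tu,tv)$ into the integrand and then absorbing the mixed term $|u|^\alpha|v|^\beta$ via Young's inequality, which is precisely what the exponent constraint \eqref{2402051803} (together with $\alpha,\beta>1$, hence $\alpha<p+1$, $\beta<q+1$) is designed to permit.
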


Let $H^*$ be the Legendre-Fenchel transform of convex functional $H$, and $\mathcal{H}^*$ be the Legendre-Fenchel transform of $\mathcal{H}: X^*\to \mathbb{R}$, i.e.
\[
H^*(u, v)=\underset{(t, s)\in \mathbb{R}^2}{\sup}\left\{ tu+sv-H(t, s)\right\},\quad \forall\; (u,v)\in \mathbb{R}^2,
\]
and
\begin{equation}\label{2211072139}
\mathcal{H}^*(f, g)=\underset{(u, v)\in X^*}{\sup} \left\{ \int_{\Omega} (fu+gv) dx-\mathcal{H}(u, v)\right\},\quad \forall\; (f, g)\in X.
\end{equation}
Using Lemma \ref{2211060953} and Lemma \ref{2302252143} (d)(e), recalling that the Legendre-Fenchel transform reverses the order by Lemma \ref{2302252143} (a), we have
\begin{lemma}\label{2211011024}
Assume that $H$ is convex satisfying $(H5)$, then there are positive constants $A_3$, $A_4$ such that
\[
A_3\left(|u|^{1+\frac1p}+|v|^{1+\frac1q}\right)\le {H}^*(u, v)\le A_4\left(|u|^{1+\frac1p}+|v|^{1+\frac1q}\right), \quad \forall\; (u,v)\in \R^2
\]
and 
\[
A_3 \Big(|f|_{1+\frac1p}^{1+\frac1p}+ |g|_{1+\frac1q}^{1+\frac1q}\Big)\le \mathcal{H}^*(f, g)\le A_4 \Big(|f|_{1+\frac1p}^{1+\frac1p}+ |g|_{1+\frac1q}^{1+\frac1q}\Big), \quad \forall\; (f, g) \in X.
\]
\end{lemma}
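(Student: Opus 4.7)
The plan is to deduce both estimates from Lemma \ref{2211060953} by using the order-reversal property of the Legendre-Fenchel transform (item (a) of Lemma \ref{2302252143}) together with the explicit formulas for power-type transforms (items (d) and (e)). The key observation is that in both the pointwise and the functional cases, the comparison functions in Lemma \ref{2211060953} split as sums of functions depending on $u$ alone and on $v$ alone, and such sums behave well under the Legendre-Fenchel transform.

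For the pointwise bound, I would first record the general identity: if $\Phi(u,v)=\phi(u)+\psi(v)$ on $\R^2$, then
\[
\Phi^*(x,y)=\sup_{u,v\in\R}\{xu+yv-\phi(u)-\psi(v)\}=\phi^*(x)+\psi^*(y),
\]
since the supremum decouples. Setting $\underline G(u,v)=A_1(|u|^{p+1}+|v|^{q+1})$ and $\overline G(u,v)=A_2(|u|^{p+1}+|v|^{q+1})$, Lemma \ref{2211060953} gives $\underline G\le H\le\overline G$, so by item (a) we have $\overline G^*\le H^*\le\underline G^*$. Using items (d) and (e) with the rescaling $a|u|^{p+1}=a(p+1)\cdot\frac1{p+1}|u|^{p+1}$, I would compute
\[
\bigl(a|u|^{p+1}\bigr)^*(x)=\frac{p}{(p+1)^{1+1/p}}\,a^{-1/p}\,|x|^{1+\frac1p},
\]
and combine this with the analogous formula in the $v$-variable to obtain constants $c_i(p,q,A_i)$ with
\[
c_1(A_2)\bigl(|x|^{1+\frac1p}+|y|^{1+\frac1q}\bigr)\le H^*(x,y)\le c_2(A_1)\bigl(|x|^{1+\frac1p}+|y|^{1+\frac1q}\bigr),
\]
which is the first claim.

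For the functional bound, the argument is essentially the same but carried out on $X^*=L^{p+1}(\Omega)\times L^{q+1}(\Omega)$. Integrating Lemma \ref{2211060953} in $x$ yields, for every $(u,v)\in X^*$,
\[
A_1\bigl(|u|_{p+1}^{p+1}+|v|_{q+1}^{q+1}\bigr)\le \mathcal H(u,v)\le A_2\bigl(|u|_{p+1}^{p+1}+|v|_{q+1}^{q+1}\bigr).
\]
Calling the two sides $\underline{\mathcal G}$ and $\overline{\mathcal G}$, order reversal gives $\overline{\mathcal G}^*\le\mathcal H^*\le\underline{\mathcal G}^*$. Since $\underline{\mathcal G}(u,v)$ splits as $A_1|u|_{p+1}^{p+1}+A_1|v|_{q+1}^{q+1}$, the supremum in \eqref{2211072139} decouples into a supremum over $u\in L^{p+1}(\Omega)$ and a supremum over $v\in L^{q+1}(\Omega)$ independently. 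Applying items (d) and (e) on $L^{p+1}(\Omega)$ (respectively $L^{q+1}(\Omega)$) to $\frac1{p+1}|u|_{p+1}^{p+1}$ (respectively $\frac1{q+1}|v|_{q+1}^{q+1}$), together with the scaling computation above performed on norms rather than absolute values, gives
\[
\underline{\mathcal G}^*(f,g)=C_1(A_1)|f|_{1+\frac1p}^{1+\frac1p}+C_2(A_1)|g|_{1+\frac1q}^{1+\frac1q},
\]
and similarly for $\overline{\mathcal G}^*$ with $A_2$ in place of $A_1$. This yields the second claim.

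The only delicate point is organising the constants after the rescaling in item (e); everything else is a direct appeal to the already established Lemmas \ref{2211060953} and \ref{2302252143}. No subtlety is expected, since the comparison functions are strictly convex, coercive, and separable, so items (d) and (e) apply verbatim both pointwise and at the level of $L^r$-norms.
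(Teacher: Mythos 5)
Your proposal is correct and follows essentially the same route as the paper: sandwich $H$ (resp.\ $\mathcal H$) between the separable power-type comparison functions from Lemma \ref{2211060953}, apply the order-reversal property of Lemma \ref{2302252143} (a), and compute the transforms of the comparison functions via items (d) and (e). The decoupling of the supremum for separable functions and the explicit rescaled constants are exactly the (unwritten) details behind the paper's one-line proof.
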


Consider $\nabla H$ and  $\nabla H^*$ as mappings of $\R^2$. If $\nabla H$ is invertible, we denote by $(\nabla H)^{-1}$ its inverse.

\begin{lemma}\label{22112307}
Assume that $H$ satisfies $(H5)$-$(H6)$, then $H^*\in C^1(\mathbb{R}^2, \mathbb{R})$. Moreover $\nabla H : \mathbb{R}^2\to \mathbb{R}^2$ is a homeomorphism, and $(\nabla H)^{-1} = \nabla H^*$.
\end{lemma}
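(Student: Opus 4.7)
The plan is to reduce the claim to Lemma \ref{2302252143}, parts (b) and (c), by verifying that $H$ is continuous, strictly convex, and has superlinear growth, and then showing that $\nabla H$ is bijective.

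First I would observe that $(H6)$ is equivalent to strict convexity of $H$. Indeed, for a $C^1$ function on $\mathbb{R}^2$, strict convexity is equivalent to strict monotonicity of the gradient: if $H$ were not strictly convex, we could find $z_1 \neq z_2$ such that $H$ is affine on the segment $[z_1, z_2]$, forcing the pairing in $(H6)$ to vanish, contradiction. Conversely strict convexity immediately yields strict monotonicity.

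Second, I would invoke Lemma \ref{2211060953}, which is already available and gives $A_1(|u|^{p+1}+|v|^{q+1}) \le H(u,v)$. Since $p,q>0$, this implies
\[
\lim_{|z|\to\infty}\frac{H(z)}{|z|}=+\infty.
\]
Combined with strict convexity and continuity (since $H \in C^1$), Lemma \ref{2302252143} (c) yields $H^*\in C^1(\mathbb{R}^2, \mathbb{R})$.

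Next I would show $\nabla H$ is a homeomorphism of $\mathbb{R}^2$. Continuity is clear. For injectivity, if $\nabla H(z_1)=\nabla H(z_2)$ with $z_1\neq z_2$, then the inner product in $(H6)$ vanishes, contradicting its strict positivity. For surjectivity, given any $w\in\mathbb{R}^2$, consider
\[
\varphi_w(z):=H(z)-\langle w,z\rangle.
\]
By the superlinear coercivity $\varphi_w(z)\to\infty$ as $|z|\to\infty$, so $\varphi_w$ attains its infimum at some $z_w\in\mathbb{R}^2$, and the first-order condition gives $\nabla H(z_w)=w$. Hence $\nabla H$ is bijective. The identity $(\nabla H)^{-1}=\nabla H^*$ then follows directly from Lemma \ref{2302252143} (b) applied to the $C^1$ convex functions $H$ and $H^*$: the equivalence $w=\nabla H(z)\iff z=\nabla H^*(w)$ is exactly what is needed. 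Continuity of the inverse is then automatic from $H^*\in C^1$.

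I expect no serious obstacle; the only small subtlety is the equivalence between $(H6)$ and strict convexity, and verifying surjectivity via the direct minimization of $\varphi_w$. Both are standard convex-analysis arguments in finite dimension, and every remaining step is a direct citation of Lemma \ref{2302252143}.
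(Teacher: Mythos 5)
Your proof is correct, and it takes a genuinely different and simpler route than the paper for the surjectivity of $\nabla H$. The paper proves surjectivity via topological degree: it shows that $\deg(\nabla H, B_{R_2}, w) = \deg(\nabla H, B_{R_2}, 0)$ for $w$ in a large ball, constructs an explicit odd homotopy $F(z,t) = \nabla H(z/(1+t)) - \nabla H(-tz/(1+t))$, and applies Borsuk's theorem to conclude the degree is odd and hence nonzero. It then obtains the homeomorphism property by appealing to Brouwer's invariance of domain (or properness). Your argument instead minimizes $\varphi_w(z) = H(z) - \langle w, z\rangle$, which is coercive by Lemma \ref{2211060953}, and reads off $\nabla H(z_w) = w$ from the first-order condition; continuity of the inverse then falls out of the already-established fact $H^* \in C^1$ together with part (b) of Lemma \ref{2302252143}. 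This is more elementary and avoids degree theory entirely. You also make explicit the observation that $(H6)$ (strict monotonicity of $\nabla H$ for a $C^1$ function) is equivalent to strict convexity, which is the hypothesis actually required by Lemma \ref{2302252143} (c); the paper implicitly relies on this but does not spell it out. One small remark: once $H$ and $H^*$ are both known to be $C^1$ and convex, Lemma \ref{2302252143} (b) already yields $\nabla H^* \circ \nabla H = \mathrm{id}$ and $\nabla H \circ \nabla H^* = \mathrm{id}$, so the separate surjectivity argument is logically redundant in both your proof and the paper's; still, including it makes the structure clearer and is harmless.
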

\begin{proof}
By Lemma \ref{2211060953}, there holds $\lim_{|z|\to \infty}{\frac{H(z)}{|z|}}=\infty$. Applying Lemma \ref{2302252143} (c), $H^*\in C^1(\mathbb{R}^2)$. 

As $\nabla H: \mathbb{R}^2\to \mathbb{R}^2$ is strictly monotone, $\nabla H$ is injective. We claim that $\nabla H$ is surjective. This fact should be known, we give a proof here for the sake of completeness. Indeed, for any $R_1>0$, by $(H5)$, there exists $R_2>0$ such that if $|z|\ge R_2$, then $|\nabla H(z)|>R_1$. Obviously, for any $w \in B_{R_1}$, $\text{deg} (\nabla H, B_{R_2}, w)$ is well defined and
\begin{equation}\label{2401132316}
\text{deg} (\nabla H, B_{R_2}, w)=\text{deg} (\nabla H, B_{R_2}, 0),
\end{equation}
where ``$\text{deg}$" denotes topological degree, see \cite[Appendix]{Willem96}.
We define a homotopy $F: B_{R_2}\times [0,1]\to \mathbb{R}^2$,
\begin{equation}\label{2211051915}
F(z, t)=\nabla H\left(\frac{z}{1+t}\right)-\nabla H\left(\frac{-tz}{1+t}\right).
\end{equation}
For any $z\in \partial B_{R_2}$ and $t\in [0, 1]$, we have $\frac{z}{1+t}\neq \frac{-tz}{1+t}$, hence $F(z, t)\neq 0$ as $\nabla H$ is injective. Therefore,
\[
\text{deg} (F(\cdot, 1), B_{R_2}, 0)=\text{deg} (F(\cdot, 0), B_{R_2}, 0)=\text{deg} (\nabla H, B_{R_2}, 0).
\]
According to Borsuk theorem \cite[Theorem D.17]{Willem96}, $\text{deg} (F(\cdot, 1), B_{R_2}, 0)$ is odd, which implies
$$\text{deg} (\nabla H, B_{R_2}, 0)\neq 0.$$
So by \eqref{2401132316} we get
$$\text{deg} (\nabla H, B_{R_2}, w)\neq 0,\quad \forall \; w\in B_{R_1},$$
which deduces that there exists some $z\in B_{R_2}$ such that $\nabla H(z)=w$. Due to the arbitrariness of $R_1$, we complete the proof of this claim.

By Brouwer's invariance of domain theorem, it's known that any continuous bijection of Euclidean space is indeed a homeomorphism; we can get this fact also by noting that $\nabla H$ is proper, i.e.~the preimage of any bounded set is bounded seeing $(H5)$, so $\nabla H$ is a closed mapping, hence a homeomorphism. Finally, it follows from Lemma \ref{2302252143} (b) that $(\nabla H)^{-1}=\nabla H^*$.
\end{proof}

Now we consider the functional version of the previous Lemma.  Recall that $H_z(u, v) = \nabla H(u, v)$ and $H_z^*(u, v) = \nabla H^*(u, v)$.
\begin{lemma}\label{2302260102}
$H_z$ is a homeomorphism from $X^*$ onto $X$, and $H_z^{-1}= H_z^*$.
\end{lemma}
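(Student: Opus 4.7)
My plan is to verify three things in turn: (i) $H_z$ is a continuous Nemytskii operator from $X^*$ to $X$; (ii) $H_z^*$ is a continuous Nemytskii operator from $X$ to $X^*$; (iii) the two are two-sided inverses of each other pointwise, hence as maps of the corresponding $L^r$ products. Combined these give the homeomorphism statement and the identification $H_z^{-1}=H_z^*$.

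For (i), starting from $(H5)$ we have $|H_u(u,v)|\le C_2(|u|^p+|u|^{\alpha-1}|v|^{\beta})$ and the symmetric bound for $H_v$. Using the scaling relation \eqref{2402051803}, a direct H\"older computation (already implicit in the proof of Lemma \ref{2211151517}) shows that the map $(u,v)\mapsto H_u(u,v)$ sends $X^*$ continuously into $L^{1+1/p}(\Omega)$, and similarly $(u,v)\mapsto H_v(u,v)$ sends $X^*$ continuously into $L^{1+1/q}(\Omega)$; continuity follows from the standard Krasnoselskii-type result on Nemytskii operators between Lebesgue spaces. Thus $H_z:X^*\to X$ is well-defined and continuous.

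For (ii), the key step is to derive a growth estimate for $\nabla H^*$. Since Lemma \ref{22112307} tells us that $\nabla H^*=(\nabla H)^{-1}$, if we set $(\xi,\eta)=\nabla H(u,v)$ then the lower bound in $(H5)$ gives
\[
\xi u+\eta v=H_u(u,v)u+H_v(u,v)v\ge C_1\bigl(|u|^{p+1}+|v|^{q+1}\bigr).
\]
Applying Young's inequality to each side of the left-hand side yields
\[
|u|^{p+1}+|v|^{q+1}\le C\bigl(|\xi|^{1+1/p}+|\eta|^{1+1/q}\bigr),
\]
so $|\nabla H^*(\xi,\eta)|$ is controlled pointwise by $C(|\xi|^{1/p}+|\eta|^{(q+1)/(q(p+1))})$ in its first component and by the symmetric expression in its second component. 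These growth exponents are exactly what is needed for the Nemytskii operator $H_z^*$ to map $X$ continuously into $X^*$: indeed $\int |\xi|^{(p+1)/p}<\infty$ and $\int |\eta|^{(q+1)/q}<\infty$ for $(\xi,\eta)\in X$, so $H_z^*(\xi,\eta)\in L^{p+1}(\Omega)\times L^{q+1}(\Omega)=X^*$. Continuity follows once more from the classical Nemytskii operator theorem, since $\nabla H^*$ is continuous by Lemma \ref{22112307}.

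For (iii), Lemma \ref{22112307} gives $\nabla H\circ \nabla H^*=\mathrm{id}_{\R^2}$ and $\nabla H^*\circ\nabla H=\mathrm{id}_{\R^2}$ pointwise on $\R^2$. Evaluating these at $(\xi(x),\eta(x))$ and $(u(x),v(x))$ a.e.\ yields $H_z\circ H_z^*=\mathrm{id}_X$ and $H_z^*\circ H_z=\mathrm{id}_{X^*}$. Combined with the continuity proved in (i) and (ii), $H_z:X^*\to X$ is a continuous bijection with continuous inverse $H_z^*$, which is the desired conclusion. The main technical obstacle is (ii): producing the sharp growth estimate for $\nabla H^*$ from the one-sided bound in $(H5)$ via the Legendre duality, and then matching the resulting exponents to the product structure of $X^*$.
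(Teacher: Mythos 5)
Your proof is correct and follows essentially the same strategy as the paper: establish that $H_z$ and $H_z^*$ are well-defined continuous Nemytskii maps between $X^*$ and $X$, then invoke the pointwise identity $(\nabla H)^{-1}=\nabla H^*$ from Lemma \ref{22112307} to conclude bijectivity. The one stylistic difference is in step (ii), where the paper obtains the growth bound on $\nabla H^*$ more directly: from $C_1|u|^{p+1}\le H_u(u,v)u = fu\le |f|\,|u|$ one reads off $|u|^p\le C|f|$ (and $|v|^q\le C|g|$) without any Young's-inequality step or cross-dependence between $f$ and $g$, whereas your Young route yields a looser but still sufficient pointwise bound on each component of $\nabla H^*(\xi,\eta)$ in terms of both $\xi$ and $\eta$.
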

\begin{proof}
We claim that $H_z$ is continuous. Consider $(u_n, v_n)\to (u, v)$ in $X^*$, as before, up to a subsequence, there exists $(\Phi_1, \Phi_2)\in X^*$ such that $|u_n|\le \Phi_1$, $|v_n|\le \Phi_2$ a.e.~for all $n$; $u_n \to u$ and $v_n \to v$ a.e.~in $\Omega$. Using $(H5)$ once more, we obtain
\[
|H_u(u_n, v_n)|\le C\Big(|\Phi_1|^p+ |\Phi_1|^{\alpha-1}|\Phi_2|^{\beta}\Big)\in L^{1+\frac1p}(\Omega)
\]
and 
\[
|H_v(u_n, v_n)| \le C\Big(|\Phi_2|^q+ |\Phi_1|^{\alpha}|\Phi_2|^{\beta-1}\Big) \in L^{1+\frac1q}(\Omega).
\]
By Lebesgue's theorem, we deduce that $H_z(u_n, v_n) \to H_z(u, v)$ in $X$, which means $H_z$ is continuous.

Given any $(f, g)\in X$, $(u, v)= (\nabla H)^{-1}(f, g) = \nabla H^*(f, g) \in X^*$ thanks to $(H5)$, since $|u(x)|^p \leq C|f(x)|$ and $|v(x)|^q \leq C|g(x)|$. The bijectivity of $H_z$ is an easy consequence by definition. The continuity of $H_z^{-1} = H^*_z$ can be proved as for $H_z$, we omit the details. 
\end{proof}

Finally, we state properties of the Legendre-Fenchel transform $\mathcal{H}^*$ for the Hamiltonian functional $\mathcal{H}$.
\begin{lemma}\label{2211072137}
$\mathcal{H}^{*}\in C^1(X, \mathbb{R})$. More precisely,
\begin{equation}\label{2211052325}
\mathcal{H}^{*}(f, g)=\int_{\Omega}H^*(f, g) dx, \quad 
\left\langle (\mathcal{H}^{*})'(f, g), (\widetilde{f}, \widetilde{g}) \right\rangle=\int_{\Omega}H^*_u(f, g) \widetilde{f}+H^*_v(f, g)\widetilde{g} dx.
\end{equation}
\end{lemma}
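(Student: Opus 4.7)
The plan is to establish first the integral representation $\mathcal{H}^*(f,g) = \int_\Omega H^*(f,g)\,dx$, then differentiate under the integral. The $\le$ direction of the representation comes from the pointwise Fenchel--Young inequality: for any $(u,v) \in X^*$ one has $f(x)u(x)+g(x)v(x)-H(u(x),v(x)) \le H^*(f(x),g(x))$ a.e., so integration followed by supremum over $X^*$ gives $\mathcal{H}^*(f,g) \le \int_\Omega H^*(f,g)\,dx$. For the reverse inequality I would exhibit the pointwise maximizer $(u^*(x),v^*(x)) := \nabla H^*(f(x),g(x))$ (well defined by Lemma \ref{22112307}) and check that $(u^*,v^*) \in X^*$. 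By Lemma \ref{22112307}, $(f,g) = \nabla H(u^*,v^*)$ a.e., so $(H5)$ yields $C_1|u^*|^{p+1} \le H_u(u^*,v^*) u^* = fu^*$, hence the pointwise estimate $|u^*|^p \le |f|/C_1$ and therefore $|u^*|^{p+1} \le C|f|^{1+\frac1p} \in L^1(\Omega)$; symmetrically $|v^*|^{q+1} \le C|g|^{1+\frac1q}$. With $(u^*,v^*) \in X^*$ realizing pointwise equality, integration gives $\mathcal{H}^*(f,g) \ge \int_\Omega H^*(f,g)\,dx$.

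The same argument yields the growth bounds $|H^*_u(f,g)(x)| \le C|f(x)|^{1/p}$ and $|H^*_v(f,g)(x)| \le C|g(x)|^{1/q}$, which I will use to differentiate. Given $(\widetilde f, \widetilde g) \in X$, I write pointwise
\[
\frac{H^*(f+t\widetilde f, g+t\widetilde g) - H^*(f,g)}{t} = \int_0^1 \bigl[H^*_u(f+\tau t\widetilde f, g+\tau t\widetilde g)\widetilde f + H^*_v(f+\tau t\widetilde f, g+\tau t\widetilde g)\widetilde g\bigr]\,d\tau.
\]
For $|t| \le 1$ the integrand is majorized by $C(|f|^{1/p}+|\widetilde f|^{1/p})|\widetilde f| + C(|g|^{1/q}+|\widetilde g|^{1/q})|\widetilde g|$, and by Young's inequality with exponents $p+1$ and $\frac{p+1}{p}$ (respectively $q+1$ and $\frac{q+1}{q}$) this lies in $L^1(\Omega)$ since $f, \widetilde f \in L^{1+\frac1p}(\Omega)$ and $g, \widetilde g \in L^{1+\frac1q}(\Omega)$. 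Continuity of $\nabla H^*$ in $\mathbb{R}^2$ and Lebesgue's dominated convergence theorem then give
\[
\left\langle (\mathcal{H}^*)'(f,g), (\widetilde f, \widetilde g)\right\rangle = \int_\Omega H^*_u(f,g)\widetilde f + H^*_v(f,g)\widetilde g \, dx.
\]

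It remains to upgrade Gateaux differentiability to $C^1$ regularity, i.e.\ to show the map $(f,g) \mapsto (H^*_u(f,g), H^*_v(f,g))$ is continuous from $X$ into $X^*$; combined with H\"older's inequality this delivers continuity of the derivative as an element of $X^*$. But this continuity is exactly the content of Lemma \ref{2302260102}, since $H^*_z = H_z^{-1}$ is a homeomorphism from $X$ onto $X^*$. The main technical hurdle is producing the $L^1$-dominating function needed for differentiation under the integral, and this is handled cleanly by the pointwise bound $|H^*_u(f,g)| \le C|f|^{1/p}$ extracted from $(H5)$ via the inverse relation; the rest is bookkeeping.
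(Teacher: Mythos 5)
Your proof is correct and follows essentially the same route as the paper: the same two-sided (Fenchel--Young plus pointwise maximizer $\nabla H^*(f,g)\in X^*$, justified via $(H5)$) argument for the integral representation, followed by differentiation using $H^*\in C^1(\mathbb{R}^2)$ and the continuity of $H_z^*:X\to X^*$ from Lemma \ref{2302260102}. The only difference is that you spell out the difference-quotient and dominated-convergence details for the Gateaux derivative, which the paper leaves implicit; this is a welcome addition rather than a deviation.
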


\begin{proof}
For any $(f, g)\in X$, by Lemma \ref{2302252143} (b) and Lemma \ref{22112307}, there exist functions $u, v$ such that
\[
H^*(f, g)=fu + gv -H(u, v),
\]
and $(u, v) = H_z^*(f, g) = \nabla H^*(f, g)$. By Lemma \ref{2302260102}, we have $(u, v)\in X^*$. In view of \eqref{2211072139}, it follows that
\[
\begin{aligned}
\mathcal{H}^*(f, g) = \underset{(\widetilde{u}, \widetilde{v})\in X^*}{\sup} \int_{\Omega}f\widetilde{u}+g\widetilde{v}-H(\widetilde{u}, \widetilde{v}) dx & \le \int_{\Omega}\underset{(t, s)\in \mathbb{R}^2}{\sup} \left\{f(x)t+g(x)s-H(t, s) \right\} dx\\
& = \int_{\Omega}H^*(f, g) dx\\
& = \int_{\Omega} fu+gv-H(u,v) dx\\
& \le \mathcal{H}^*(f, g),
\end{aligned}
\]
which implies the expression of $\mathcal{H}^*$. By $H^*\in C^1(\mathbb{R}^2, \mathbb{R})$ and Lemma \ref{2302260102}, we get the expression of $(\mathcal{H}^{*})'$ and $\mathcal{H}^{*}\in C^1(X^*, \mathbb{R})$.
\end{proof}
Next, we explain how to find weak solutions to \eqref{main}. Clearly, $(-\Delta)^s$ is an isomorphism of $\mathcal{W}^{2s, r}(\Omega)$ onto $L^r(\Omega)$. Therefore we can denote its inverse by $\mathcal{A}: L^r(\Omega)\to \mathcal{W}^{2s, r}(\Omega)$. Define $\mathcal{J}: X\to \mathbb{R}$ by
\begin{equation}\label{2211011351}
\mathcal{J}(f, g)=\mathcal{H}^*(f, g)-\int_{\Omega}g\mathcal{A}f dx.
\end{equation}
Using Proposition \ref{2302102144}, $\mathcal{J}$ is well defined. 
\begin{lemma}\label{2211211644}
$\mathcal{A}$ is self-adjoint in the following sense: 
\begin{equation}\label{2211211634}
\int_{\Omega}g\mathcal{A}f dx=\int_{\Omega}f\mathcal{A}g dx, \quad \forall\; (f, g)\in X.
\end{equation}
\end{lemma}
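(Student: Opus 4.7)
The plan is to unfold the claim in terms of the $L^1$-weak formulation and then use density of $\mathcal{T}_s(\Omega)$ in $\mathcal{W}^{2s,r}(\Omega)$ given by Proposition \ref{2302102144}. Setting $u:=\mathcal{A}f\in \mathcal{W}^{2s,1+\frac1p}(\Omega)$ and $v:=\mathcal{A}g\in \mathcal{W}^{2s,1+\frac1q}(\Omega)$, the identity \eqref{2211211634} is equivalent to
\[
\int_{\Omega} u\,(-\Delta)^s v\, dx=\int_{\Omega} v\,(-\Delta)^s u\, dx.
\]
Note that $u$ and $v$ are not in $\mathcal{T}_s(\Omega)$ in general, so the $L^1$-weak formulation \eqref{L1weak} cannot be applied directly with $v$ (or $u$) as test function. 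This is the only real obstacle, and it is overcome by approximation.

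First I would choose sequences $\{u_n\},\{v_n\}\subset \mathcal{T}_s(\Omega)$ with $u_n\to u$ in $\mathcal{W}^{2s,1+\frac1p}(\Omega)$ and $v_n\to v$ in $\mathcal{W}^{2s,1+\frac1q}(\Omega)$, which exist by the density part of Proposition \ref{2302102144}. Since $u_n,v_n\in \mathcal{T}_s(\Omega)\subset X_0^s(\Omega)$ and $(-\Delta)^s u_n,(-\Delta)^s v_n\in C_c^\infty(\Omega)$, each $u_n$ is itself an $L^1$-weak solution with source $(-\Delta)^s u_n$, and testing with $v_n\in \mathcal{T}_s(\Omega)$ in \eqref{L1weak} yields
\[
\int_{\Omega} u_n\,(-\Delta)^s v_n\, dx=\int_{\Omega}\bigl((-\Delta)^s u_n\bigr) v_n\, dx\qquad \forall\, n.
\]

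The next step is to pass to the limit. The subcritical condition \eqref{condition} is exactly what makes the continuous embeddings
\[
\mathcal{W}^{2s,1+\frac1p}(\Omega)\subset L^{q+1}(\Omega),\qquad \mathcal{W}^{2s,1+\frac1q}(\Omega)\subset L^{p+1}(\Omega)
\]
hold via Proposition \ref{2302102144}: for $r=1+\tfrac1p$ one checks $\tfrac{Nr}{N-2sr}\geq q+1$ is equivalent to $\tfrac{1}{p+1}+\tfrac{1}{q+1}\geq \tfrac{N-2s}{N}$, and symmetrically for $r=1+\tfrac1q$ (the borderline/supercritical cases $r\geq N/(2s)$ provide the embedding for free). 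Hence $u_n\to u$ in $L^{q+1}(\Omega)$, $v_n\to v$ in $L^{p+1}(\Omega)$, while by definition of the $\mathcal{W}^{2s,r}$-norm, $(-\Delta)^s u_n\to f$ in $L^{1+\frac1p}(\Omega)$ and $(-\Delta)^s v_n\to g$ in $L^{1+\frac1q}(\Omega)$. By H\"older's inequality each side of the identity above converges:
\[
\int_{\Omega}u_n\,(-\Delta)^s v_n\, dx\longrightarrow \int_{\Omega}u\,g\, dx,\qquad \int_{\Omega}\bigl((-\Delta)^s u_n\bigr)v_n\, dx\longrightarrow \int_{\Omega}f\,v\, dx,
\]
which gives $\int_\Omega g\,\mathcal{A}f\, dx=\int_\Omega f\,\mathcal{A}g\, dx$, as desired. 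The main (and essentially only) subtlety is justifying the two Sobolev-type embeddings, which is precisely what the subcritical assumption \eqref{condition} buys us.
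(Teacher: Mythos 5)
Your proof is correct and follows essentially the same approach as the paper: approximate by smooth data, use the self-adjointness of $(-\Delta)^s$ on $X_0^s(\Omega)$ (your $L^1$-weak testing of $u_n$ against $v_n\in\mathcal{T}_s(\Omega)$ is the same identity), and pass to the limit. The paper approximates $(f,g)$ in $X$ by $C_c^\infty$ pairs and sets $(u_n,v_n)=(\mathcal{A}f_n,\mathcal{A}g_n)$, whereas you approximate $(u,v)=(\mathcal{A}f,\mathcal{A}g)$ directly in $\mathcal{W}^{2s,\cdot}$-norm; since $(-\Delta)^s$ and $\mathcal{A}$ are mutually inverse isomorphisms these are the same thing, and you additionally make explicit the embeddings $\mathcal{W}^{2s,1+\frac1p}(\Omega)\subset L^{q+1}(\Omega)$, $\mathcal{W}^{2s,1+\frac1q}(\Omega)\subset L^{p+1}(\Omega)$ (a consequence of \eqref{condition}) that the paper leaves implicit when taking $n\to\infty$.
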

\begin{proof}
In fact, this is a direct consequence of the self-adjointness of $(-\Delta)^s$ over $X_0^s(\Omega)$ and density argument. Let $\{f_n\}, \{g_n\}\subset C_c^\infty (\Omega)$ satisfying $(f_n, g_n)\to (f, g)$ in $X$. Set $(u_n, v_n) = (\mathcal{A}f_n, \mathcal{A}g_n)\in X_0^{s}(\Omega)^2$. Then
\[
\begin{split}
\int_{\Omega}g_n\mathcal{A}f_n dx=\int_{\Omega} u_n (-\Delta)^sv_n dx=\int_{\Omega}v_n (-\Delta)^s u_n dx=\int_{\Omega}f_n\mathcal{A}g_n dx.
\end{split}
\]
Taking $n\to \infty$, we get \eqref{2211211634}.
\end{proof}
We are now in position to present the dual method setting.
\begin{proposition}\label{2211171427}
If $(f, g)\in X$ is a critical point of $\mathcal{J}$, then ${H}_z^{*}(f, g)$ is a $L^1$-weak solution to \eqref{main}.
\end{proposition}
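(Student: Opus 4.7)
The plan is to compute the Fréchet derivative of $\mathcal{J}$ on $X$, extract from the critical point condition an equation in $X^{*}$ identifying $\nabla H^{*}(f,g)$ with $(\mathcal{A}g,\mathcal{A}f)$, and finally read off the $L^{1}$-weak system by unwinding the definition of $\mathcal{W}^{2s,r}(\Omega)$.

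First I would show that $\mathcal{J}\in C^{1}(X,\mathbb{R})$ with
\[
\langle \mathcal{J}'(f,g),(\widetilde f,\widetilde g)\rangle
=\int_{\Omega}\!\bigl(H^{*}_{u}(f,g)\widetilde f+H^{*}_{v}(f,g)\widetilde g\bigr)\,dx-\int_{\Omega}\!\bigl(g\,\mathcal{A}\widetilde f+\widetilde g\,\mathcal{A}f\bigr)\,dx.
\]
The first integral comes from Lemma \ref{2211072137}. For the bilinear part, since $\mathcal{A}:L^{1+\frac{1}{p}}(\Omega)\to \mathcal{W}^{2s,1+\frac{1}{p}}(\Omega)\hookrightarrow L^{q+1}(\Omega)$ continuously (this last embedding uses precisely the subcritical hypothesis \eqref{condition}, as one checks by comparing $\frac{N(p+1)}{Np-2s(p+1)}$ with $q+1$), the map $(f,g)\mapsto\int_{\Omega}g\,\mathcal{A}f\,dx$ is a continuous bilinear form on $X$, so its Fréchet derivative is the obvious expression above. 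Using the self-adjointness identity \eqref{2211211634} of Lemma \ref{2211211644}, I rewrite $\int g\,\mathcal{A}\widetilde f\,dx=\int \widetilde f\,\mathcal{A}g\,dx$, so the critical point equation $\mathcal{J}'(f,g)=0$ becomes
\[
\int_{\Omega}\!\bigl(H^{*}_{u}(f,g)-\mathcal{A}g\bigr)\widetilde f\,dx+\int_{\Omega}\!\bigl(H^{*}_{v}(f,g)-\mathcal{A}f\bigr)\widetilde g\,dx=0,\qquad\forall\,(\widetilde f,\widetilde g)\in X.
\]
Since the parenthesized terms lie in $L^{p+1}(\Omega)$ and $L^{q+1}(\Omega)$ respectively (for $H^{*}_{u}(f,g),H^{*}_{v}(f,g)$ this follows from the growth of $\nabla H^{*}$ implied by Lemma \ref{22112307} together with $(f,g)\in X$; for $\mathcal{A}f,\mathcal{A}g$ it is the embedding just mentioned), the duality $X^{*}=L^{p+1}(\Omega)\times L^{q+1}(\Omega)$ and the arbitrariness of $(\widetilde f,\widetilde g)$ give the pointwise identities
\[
H^{*}_{u}(f,g)=\mathcal{A}g,\qquad H^{*}_{v}(f,g)=\mathcal{A}f\qquad\text{a.e.\ in }\Omega.
\]

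Now set $(u,v):=H^{*}_{z}(f,g)=\nabla H^{*}(f,g)\in X^{*}$, so that $u=\mathcal{A}g$ and $v=\mathcal{A}f$. By Lemma \ref{22112307}, $\nabla H$ is the inverse of $\nabla H^{*}$, hence $(f,g)=\nabla H(u,v)=(H_{u}(u,v),H_{v}(u,v))$. In particular $H_{u}(u,v)=f\in L^{1+\frac{1}{p}}(\Omega)\subset L^{1}(\Omega;\delta^{s}dx)$ and similarly for $H_{v}(u,v)$. Because $u=\mathcal{A}g\in\mathcal{W}^{2s,1+\frac{1}{p}}(\Omega)$, the definition \eqref{2401141656} of $\mathcal{W}^{2s,r}(\Omega)$ gives, for every $\varphi\in\mathcal{T}_{s}(\Omega)$,
\[
\int_{\Omega}u(-\Delta)^{s}\varphi\,dx=\int_{\Omega}g\varphi\,dx=\int_{\Omega}H_{v}(u,v)\varphi\,dx,
\]
and the analogous identity holds for $v$ and any $\psi\in\mathcal{T}_{s}(\Omega)$. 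This is exactly the definition of an $L^{1}$-weak solution to \eqref{main}, completing the proof.

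The main technical obstacle I anticipate is verifying the $C^{1}$ regularity of the bilinear term and, in parallel, certifying that each of $\mathcal{A}f,\mathcal{A}g,H^{*}_{u}(f,g),H^{*}_{v}(f,g)$ lies in the appropriate Lebesgue dual space so that the pointwise identities can be extracted from the weak equation; this is where the subcritical range \eqref{condition} intervenes crucially via Proposition \ref{2302102144}.
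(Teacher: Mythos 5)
Your proposal is correct and follows essentially the same route as the paper: differentiate $\mathcal{J}$, use the self-adjointness of $\mathcal{A}$ (Lemma \ref{2211211644}) and the expression of $(\mathcal{H}^*)'$ from Lemma \ref{2211072137} to identify $\nabla H^*(f,g)=(\mathcal{A}g,\mathcal{A}f)$, then invert via $(\nabla H)^{-1}=\nabla H^*$ and unwind the definition of $\mathcal{W}^{2s,r}(\Omega)$. The extra care you take with the embedding $\mathcal{W}^{2s,1+\frac1p}(\Omega)\hookrightarrow L^{q+1}(\Omega)$ and the dual-space memberships is consistent with what the paper establishes earlier via Proposition \ref{2302102144}.
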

\begin{proof}
Since $(f, g)\in X$ is a critical point of $\mathcal{J}$,
\begin{equation}\label{2211211641}
0=\langle \mathcal{J}'(f, g), (f_1, g_1) \rangle=\langle (\mathcal{H}^*)'(f, g), (f_1 , g_1 )\rangle-\int_{\Omega}g\mathcal{A}f_1 dx-\int_{\Omega}g_1\mathcal{A}f dx,\quad \forall\; (f_1, g_1)\in X.
\end{equation}
Let $(u, v)=H_z^*(f, g)$, then by Lemma \ref{2211072137}, we have
\begin{equation}\label{2211211642}
\langle (\mathcal{H}^*)'(f, g), (f_1 , g_1 )\rangle=\int_{\Omega}uf_1 dx +\int_{\Omega}v g_1 dx.
\end{equation}
As $H_z^*=H_z^{-1}$, we obtain
$(f, g)=(H_u(u, v), H_v(u,v))$.
If we choose $g_1=0$, then using \eqref{2211211641}, \eqref{2211211642} and Lemma \ref{2211211644}, it holds that
\[
\int_{\Omega}uf_1 dx=\langle (\mathcal{H}^*)'(f, g), (f_1, 0) \rangle=\int_{\Omega}g\mathcal{A}f_1 dx=\int_{\Omega}f_1\mathcal{A}g dx, \quad \forall\; f_1 \in L^{1+\frac{1}{p}}(\Omega).
\]
Hence $u=\mathcal{A}g$, i.e.~$(-\Delta)^s u=g=H_v(u, v)$ in the weak sense. 
Similarly, $(-\Delta)^s v=H_u(u, v)$.
\end{proof}

\subsection{Proof of Theorem \ref{2211092215} completed}
Now we establish a mountain pass structure to get existence of nontrivial critical points of $\mathcal{J}$. Set
$$S_\rho:=\{(\rho^{k-1}f, \rho^{l-1}g): (f, g)\in X, \,\|(f, g)\|_X=\rho\}$$
where $k, l>1$ satisfy
\begin{equation}\label{2211012130}
\frac{p}{p+1}>\frac{k}{k+l}, \quad \frac{q}{q+1}>\frac{l}{k+l}.
\end{equation}
Assume $0<\rho<1$. For any $(\rho^{k-1}f, \rho^{l-1}g)\in S_\rho$, by Lemma \ref{2211011024} and Proposition \ref{2302102144},
\[
\begin{split}
\mathcal{J}(\rho^{k-1}f, \rho^{l-1}g)& =\mathcal{H}^*(\rho^{k-1}f, \rho^{l-1}g)-\rho^{k+l-2}\int_{\Omega}g\mathcal{A}f dx\\
& \ge C_1 \rho^{(k-1)(1+1/p)}|f|_{1+\frac1p}^{1+\frac1p}+C_1\rho^{(l-1)(1+1/q)}|g|_{1+\frac1q}^{1+\frac1q}-C_2\rho^{k+l}\\
& \ge C_3\rho^{\max\{k(1+1/p),\, l(1+1/q)\}}-C_2\rho^{k+l}.
\end{split}
\]
According to \eqref{2211012130}, there exist $\rho_0 > 0, \beta>0$ such that $\mathcal{J}(\rho_0^{k-1}f, \rho_0^{l-1}g)>\beta$ if $(\rho_0^{k-1}f, \rho_0^{l-1}g)\in S_{\rho_0}$. 
On the other hand, we fix some $(f_0, g_0)\in X$ with $$\int_{\Omega}f_0\mathcal{A}g_0 dx>0.$$ By Lemma \ref{2211011024},
\[
\mathcal{J}(\rho^{k}f_0, \rho^{l}g_0)\le C_4 \rho^{k(1+1/p)}|f_0|_{1+\frac1p}^{1+\frac1p}+C_4\rho^{l(1+1/q)} |g_0|_{1+\frac1q}^{1+\frac1q}-\rho^{k+l}\int_{\Omega}f_0\mathcal{A}g_0 dx.
\]
Consequently, due to \eqref{2211012130}, there exists $\rho_1>\rho_0$ such that $\mathcal{J}(\rho_1^{k}f_0, \rho_1^{l}g_0)<0$. Let
\[
\Gamma:=\{\gamma\in C([0, 1], X): \gamma(0)=0,\, \gamma(1)=(\rho_1^{k}f_0, \rho_1^{l}g_0) \}.
\]
It is clear that $\gamma([0, 1])\cap S_{\rho_0}\neq \varnothing$ for any $\gamma\in \Gamma$, from which we obtain a mountain pass structure of $\mathcal{J}$ around 0, and define the mountain pass level by
\[
c:=\underset{\gamma\in \Gamma}{\inf}\,\underset{t\in [0, 1]}{\max}\, \mathcal{J}(\gamma(t))\ge \beta>0.
\]

\begin{lemma}\label{2211171422}
$\mathcal J$ satisfies the Palais-Smale condition.
\end{lemma}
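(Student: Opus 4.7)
The plan is to show that any Palais--Smale sequence $\{(f_n,g_n)\}$ is bounded in $X$, then extract a weakly convergent subsequence and upgrade the convergence to strong convergence via the compactness of the solution operator $\mathcal A$ combined with the continuity of the duality map $H_z:X^*\to X$.

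For the boundedness step, I would set $(u_n,v_n):=H_z^*(f_n,g_n)\in X^*$, so that $(f_n,g_n)=H_z(u_n,v_n)$, i.e.\ $f_n=H_u(u_n,v_n)$ and $g_n=H_v(u_n,v_n)$. Using the self-adjointness of $\mathcal A$ (Lemma \ref{2211211644}) and Lemma \ref{2211072137}, I compute
\[
\mathcal J(f_n,g_n)-\langle \mathcal J'(f_n,g_n),((1-\theta)f_n,\theta g_n)\rangle
=\int_\Omega\bigl(\theta H_u(u_n,v_n)u_n+(1-\theta)H_v(u_n,v_n)v_n-H(u_n,v_n)\bigr)dx.
\]
By $(H7)$, this quantity dominates $C_3\int_\Omega(|u_n|^{p+1}+|v_n|^{q+1})dx-C_4|\Omega|$. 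On the other hand, using $(H5)$, one has the pointwise bounds $|f_n|\le C(|u_n|^p+|u_n|^{\alpha-1}|v_n|^\beta)$ and $|g_n|\le C(|v_n|^q+|u_n|^\alpha|v_n|^{\beta-1})$, and Young's inequality (with exponents determined by the balance relation \eqref{2402051803}) yields
\[
|f_n|_{1+\frac1p}^{1+\frac1p}+|g_n|_{1+\frac1q}^{1+\frac1q}\le C\int_\Omega(|u_n|^{p+1}+|v_n|^{q+1})dx.
\]
Hence $\|(f_n,g_n)\|_X\le C\bigl(\int_\Omega(|u_n|^{p+1}+|v_n|^{q+1})dx\bigr)^{\mu}$ with $\mu=\max(p/(p+1),q/(q+1))<1$. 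Combining with $|\mathcal J(f_n,g_n)|\le c+1$ and $\|\mathcal J'(f_n,g_n)\|_{X^*}=o(1)$, this closes boundedness.

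Since $X$ is reflexive, up to a subsequence $(f_n,g_n)\rightharpoonup (f,g)$ weakly in $X$. Next, reading $\langle\mathcal J'(f_n,g_n),(f_1,g_1)\rangle=o(1)\|(f_1,g_1)\|_X$ via Lemma \ref{2211211644} gives
\[
H_u^*(f_n,g_n)-\mathcal A g_n\to 0\ \text{in}\ L^{p+1}(\Omega),\qquad H_v^*(f_n,g_n)-\mathcal A f_n\to 0\ \text{in}\ L^{q+1}(\Omega).
\]
The subcritical condition \eqref{condition} is exactly what ensures that $\mathcal A:L^{1+\frac1q}(\Omega)\to L^{p+1}(\Omega)$ and $\mathcal A:L^{1+\frac1p}(\Omega)\to L^{q+1}(\Omega)$ are compact, via Proposition \ref{2302102144} applied to $\mathcal W^{2s,1+\frac1q}\hookrightarrow L^{p+1}$ and $\mathcal W^{2s,1+\frac1p}\hookrightarrow L^{q+1}$. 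A short computation $\frac{1}{p+1}+\frac{1}{q+1}>\frac{N-2s}{N}$ is equivalent to strict subcriticality of these embeddings. Thus $\mathcal A g_n\to \mathcal A g$ in $L^{p+1}$ and $\mathcal A f_n\to \mathcal A f$ in $L^{q+1}$, which upgrades the above to
\[
(u_n,v_n)=H_z^*(f_n,g_n)\to(\mathcal A g,\mathcal A f)\ \text{strongly in}\ X^*.
\]

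Finally, by Lemma \ref{2302260102}, $H_z:X^*\to X$ is continuous, so
\[
(f_n,g_n)=H_z(u_n,v_n)\longrightarrow H_z(\mathcal A g,\mathcal A f)\ \text{strongly in}\ X,
\]
and by uniqueness of weak limits the right-hand side equals $(f,g)$. The main obstacle is the strong-convergence step: everything hinges on identifying the correct compact operator $\mathcal A$ between the right $L^r$-spaces, which is precisely where the subcritical hypothesis \eqref{condition} enters, and on pushing convergence from the ``$(u,v)$-side'' through the nonlinear but continuous map $H_z$ to obtain strong convergence of $(f_n,g_n)$ itself.
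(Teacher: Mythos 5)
Your proposal is correct and follows essentially the same route as the paper: the boundedness comes from testing $\mathcal J'$ against $((1-\theta)f_n,\theta g_n)$, using \eqref{2302221345} and $(H7)$ to control $\int_\Omega(|u_n|^{p+1}+|v_n|^{q+1})dx$, and then bounding $\|(f_n,g_n)\|_X$ by a sublinear power of that quantity; the strong convergence comes from writing $\mathcal J'(f_n,g_n)=H^*_z(f_n,g_n)-(\mathcal A g_n,\mathcal A f_n)=o(1)$ in $X^*$, invoking the compactness of $\mathcal A:X\to X^*$ (Proposition \ref{2302102144} under \eqref{condition}) and the continuity of $H_z=(H_z^*)^{-1}$. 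Your version merely spells out componentwise what the paper phrases via ``$H^*_z$ is a homeomorphism,'' and derives the $\|(f_n,g_n)\|_X$ bound directly from $(H5)$ and Young's inequality rather than through Lemma \ref{2211011024}; both are equivalent.
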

\begin{proof}
 Let $\{(f_n, g_n)\}\subset X$ be a Palais-Smale consequence of $\mathcal{J}$ at level $c$, that is,
\begin{equation}\label{2211041545}
\mathcal{J}(f_n, g_n)\to c\;\; \mbox{and }\; \mathcal{J}'(f_n, g_n)\to 0\;\; \text{in}~X^*, \quad \mbox{as $n\to \infty$}.
\end{equation}
From \eqref{2211041545} and \eqref{2211211634}, it follows that
\begin{equation}\label{2211051603}
\mathcal{H}^*(f_n, g_n)-\langle (\mathcal{H}^*)'(f_n, g_n), ((1-\theta) f_n, \theta g_n)\rangle=c+o(1)\|(f_n, g_n)\|_{X}+o(1)
\end{equation}
where $\theta$ is given in $(H7)$. Let $(u_n ,v_n)=H^*_z(f_n, g_n)$. According to Lemma \ref{2302252143} (b) and Lemma \ref{2211072137},
\begin{equation}\label{2302221345}
\mathcal{H}^*(f_n, g_n)=\int_{\Omega}f_nu_n+g_nv_ndx - \mathcal{H}(u_n, v_n),
\end{equation}
which together with \eqref{2211051603} and $(H7)$ implies that
\[
\begin{aligned}
&\quad c+o(1)\|(f_n, g_n)\|_{X}+o(1)\\
& = \int_{\Omega}f_nu_n+g_nv_n dx - \mathcal{H}(u_n, v_n)-\langle (\mathcal{H}^*)'(f_n, g_n), ((1-\theta) f_n, \theta g_n)\rangle\\
& = \theta\int_{\Omega} H_u(u_n, v_n)u_ndx+(1-\theta)\int_{\Omega} H_v(u_n, v_n)v_ndx- \mathcal{H}(u_n, v_n)\\
& \ge C_1\int_{\Omega}|u_n|^{p+1}dx+C_1\int_{\Omega}|v_n|^{q+1}dx-C_2|\Omega|.
\end{aligned}
\]
Combining the above inequality, \eqref{2302221345}, Lemma \ref{2211060953}, Lemma \ref{2211011024} and Young's inequality,  $\{(f_n, g_n)\}$ is bounded in $X$.

Furthermore, by Proposition \ref{2302102144}, $\mathcal{A}: X\to X^*$ is compact, hence $\{(\mathcal{A}f_n, \mathcal{A}g_n)\}$ is compact in $X^*$. By \eqref{2211041545},
\begin{equation}\label{2211050929}
\mathcal{J}'(f_n, g_n)=H^*_z(f_n, g_n)-(\mathcal{A}g_n, \mathcal{A}f_n)=o(1)\quad \text{in}~X^*.
\end{equation}
Since $H^*_z$ is a homeomorphism from $X$ onto $X^*$, $\{(f_n, g_n)\}$ is compact in $X$. The proof is done.
\end{proof}

As $\mathcal J$ satisfies the $(PS)$ condition, applying mountain pass theorem \cite{AR1973} and Proposition \ref{2211171427}, we get a nontrivial solution $(u, v)\in X^*$ of \eqref{main}. Since $(H5)$ implies $(H4)$, by Lemma \ref{2407281607}, $(u, v)$ is a classical solution.

\section{Positive solutions for fractional Lane-Emden system}\label{2306081126}
As very special case of \eqref{main}, we consider the system \eqref{2305191141} under the subcritical assumption \eqref{2305191926}. Using Lemma \ref{2407281607}, we know that any energy solution of \eqref{2305282030} belongs to $C^s(\overline{\Omega})$.

\begin{proposition}\label{2306130008}
Let $p,\, q$ satisfy \eqref{2305191926}. Then $u$ is an energy solution to \eqref{2305282030} if and only if $(u, v)$ is a classical solution to \eqref{2305191141}, where $v=|(-\Delta)^{s}u|^{\frac1q-1}(-\Delta)^{s}u$.
\end{proposition}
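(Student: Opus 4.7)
The plan is to prove the two implications separately, using in each direction the fact that $v = |(-\Delta)^s u|^{1/q-1}(-\Delta)^s u$ is equivalent to $(-\Delta)^s u = |v|^{q-1}v$ together with $v \in L^{q+1}(\Omega)$ whenever $(-\Delta)^s u \in L^{1+1/q}(\Omega)$.

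For the direction ``energy solution $\Rightarrow$ classical solution'', I would first observe that the definition of $\mathcal{W}^{2s,1+1/q}(\Omega)$ directly gives the first equation of \eqref{2305191141} in the $L^1$-weak sense, since by construction $(-\Delta)^s u = |v|^{q-1} v$ and the identity \eqref{L1weak} holds with $f = |v|^{q-1}v$ by Definition \eqref{2401141656}. For the second equation, I would note that $\mathcal{T}_s(\Omega)\subset \mathcal{W}^{2s,1+1/q}(\Omega)$ (an element of $\mathcal{T}_s(\Omega)$ trivially has $(-\Delta)^s$ in $C_c^\infty(\Omega)\subset L^r(\Omega)$ for all $r$), so the energy identity \eqref{2305282038} applied to $\varphi\in \mathcal{T}_s(\Omega)$ gives $\int_\Omega v (-\Delta)^s \varphi\,dx = \int_\Omega |u|^{p-1}u\,\varphi\,dx$, which is the $L^1$-weak formulation of $(-\Delta)^s v = |u|^{p-1}u$. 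Thus $(u,v)$ is an $L^1$-weak solution of \eqref{2305191141}.

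To upgrade from $L^1$-weak to classical, I would invoke Lemma~\ref{2407281607}. For that I need $(u,v)\in X^* = L^{p+1}(\Omega)\times L^{q+1}(\Omega)$. The membership $v\in L^{q+1}(\Omega)$ is immediate from $(-\Delta)^s u\in L^{1+1/q}(\Omega)$. For $u\in L^{p+1}(\Omega)$, I would apply Proposition~\ref{2302102144} to $\mathcal{W}^{2s,1+1/q}(\Omega)$: a direct computation shows that the embedding threshold $\frac{Nr}{N-2sr}$ with $r=1+1/q$ equals exactly $p+1$ on the critical hyperbola $\frac{1}{p+1}+\frac{1}{q+1}=\frac{N-2s}{N}$, so the strict subcritical assumption \eqref{2305191926} yields $u\in L^{p+1}(\Omega)$. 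Then Lemma~\ref{2407281607}, applied with $H(u,v)=\frac{|u|^{p+1}}{p+1}+\frac{|v|^{q+1}}{q+1}$ which obviously satisfies $(H4)$, gives classicality.

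For the converse direction, if $(u,v)$ is classical then $u,v\in L^\infty(\Omega)$, so $(-\Delta)^s u=|v|^{q-1}v\in L^\infty(\Omega)\subset L^{1+1/q}(\Omega)$, confirming $u\in \mathcal{W}^{2s,1+1/q}(\Omega)$. The identity \eqref{2305282038} holds automatically for test functions $\varphi\in \mathcal{T}_s(\Omega)$ since $(u,v)$ is in particular an $L^1$-weak solution of \eqref{2305191141}. The main step is to extend \eqref{2305282038} from $\mathcal{T}_s(\Omega)$ to all of $\mathcal{W}^{2s,1+1/q}(\Omega)$; this is the only nontrivial point. I would use the density of $\mathcal{T}_s(\Omega)$ in $\mathcal{W}^{2s,1+1/q}(\Omega)$ from Proposition~\ref{2302102144}: pick $\varphi_n\in\mathcal{T}_s(\Omega)$ with $\varphi_n\to\varphi$ in $\mathcal{W}^{2s,1+1/q}(\Omega)$. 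Then $(-\Delta)^s\varphi_n\to (-\Delta)^s\varphi$ in $L^{1+1/q}(\Omega)$, and by the subcritical embedding of Proposition~\ref{2302102144} (the same computation as in the first direction), $\varphi_n\to\varphi$ in $L^{p+1}(\Omega)$. Since $v\in L^{q+1}(\Omega)$ is the conjugate exponent to $1+1/q$, and $|u|^{p-1}u\in L^{(p+1)/p}(\Omega)$ is the conjugate exponent to $p+1$, both sides of \eqref{2305282038} pass to the limit by H\"older's inequality. The main (mild) obstacle is just bookkeeping the exponents to confirm that the subcritical assumption \eqref{2305191926} is precisely what makes both the embedding $\mathcal{W}^{2s,1+1/q}(\Omega)\hookrightarrow L^{p+1}(\Omega)$ compact and the $L^{p+1}$–$L^{(p+1)/p}$ duality work simultaneously.
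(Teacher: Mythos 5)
Your proposal is correct and follows essentially the same route as the paper: derive the two $L^1$-weak identities from the energy formulation restricted to $\mathcal{T}_s(\Omega)$ and from the definition of $v$, upgrade to classical via Lemma~\ref{2407281607}, and for the converse extend the test-function class from $\mathcal{T}_s(\Omega)$ to $\mathcal{W}^{2s,1+\frac1q}(\Omega)$ by density. You in fact supply two details the paper leaves implicit — the verification that $(u,v)\in X^*$ (needed to invoke Lemma~\ref{2407281607}) and the explicit H\"older limit passage in the density argument — both of which are correct.
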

\begin{proof}
First, assume that $u$ is an energy solution to \eqref{2305282030}. It follows from \eqref{2305282038} that
\begin{equation}\label{2306012027}
\int_{\Omega}v(-\Delta)^{s}\varphi dx=\int_{\Omega}|u|^{p-1}u\varphi dx\quad \forall\; \varphi\in\mathcal{T}_s(\Omega).
\end{equation}
Next, since $v=|(-\Delta)^{s}u|^{\frac1q-1}(-\Delta)^{s}u$ and $u\in\mathcal{W}^{2s, 1+\frac1q}(\Omega)$, we have
\begin{equation}\label{2306012028}
\int_{\Omega}u(-\Delta)^{s}\varphi dx=\int_{\Omega}|v|^{q-1}v\varphi dx\quad \forall\; \varphi\in\mathcal{T}_s(\Omega).
\end{equation}
By \eqref{2306012027} and \eqref{2306012028}, $(u, v)$ is a weak solution to \eqref{2305191141}. Since $u$ is an energy solution to \eqref{2305282030}, using Lemma \ref{2407281607}, $u,\, v\in C^s(\overline{\Omega})$. 

Conversely, if $(u, v)$ is a classical solution to \eqref{2305191141}, \eqref{2306012027} and \eqref{2306012028} will hold, which implies
\begin{equation}\label{2306012036}
\int_{\Omega}|(-\Delta)^{s}u|^{\frac1q-1}(-\Delta)^{s}u(-\Delta)^{s}\varphi dx=\int_{\Omega}|u|^{p-1}u\varphi dx\quad \forall\; \varphi\in \mathcal{W}^{2s, 1+\frac1q}(\Omega).
\end{equation}
 Thus, $u$ is an energy solution to \eqref{2305282030}.
\end{proof}

We denote
\begin{equation}\label{2306031629}
\mathcal{R}(u):=\langle \mathcal{I}'(u), u\rangle=\left|(-\Delta)^su\right|_{1+\frac1q}^{1+\frac1q}-|u|_{p+1}^{p+1},\quad \forall\; u\in \mathcal{W}^{2s, 1+\frac1q}(\Omega).
\end{equation}
For any $u\in\mathcal{N}_{\mathcal{I}}$ (see \eqref{2406220952}), there holds
\[
\begin{aligned}
\langle \mathcal{R}'(u), u\rangle =\Big(1+\frac{1}{q}\Big)\left|(-\Delta)^su\right|_{1+\frac1q}^{1+\frac1q}-(p+1)|u|_{p+1}^{p+1} &=\Big(\frac{1}{q}-p\Big)\left|(-\Delta)^su\right|_{1+\frac1q}^{1+\frac1q},
\end{aligned}
\]
which implies
\begin{equation}\label{2306032031}
\begin{cases}
\langle \mathcal{R}'(u), u\rangle<0,\quad \text{if}~~pq>1,\\
\langle \mathcal{R}'(u), u\rangle>0,\quad \text{if}~~pq<1.
\end{cases}
\end{equation}
Using implicit function theorem, if $pq\neq 1$, $\mathcal{N}_{\mathcal{I}}$ is a $C^1$-submanifold of $\mathcal{W}^{2s, 1+\frac1q}(\Omega)$ with codimension 1.
\begin{lemma}\label{2306031952}
If $pq\neq 1$, then $\mathcal{N}_{\mathcal{I}}$ is non empty. Moreover, when $pq>1$, $\mathcal{N}_{\mathcal{I}}$ is far away from zero and $\mathcal{I}$ constrained on $\mathcal{N}_{\mathcal{I}}$ has a positive lower bound; when $pq<1$, $\mathcal{N}_{\mathcal{I}}$ is bounded in $\mathcal{W}^{2s, 1+\frac1q}(\Omega)$.
\end{lemma}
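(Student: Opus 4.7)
My plan is a standard Nehari-manifold analysis, broken into three ingredients: a fibering (scaling) argument for non-emptiness, an application of the subcritical Sobolev embedding for the quantitative bounds, and an algebraic simplification of $\mathcal{I}$ on $\mathcal{N}_{\mathcal{I}}$.

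First I would prove non-emptiness by scaling. For any nonzero $u_0\in\mathcal{T}_s(\Omega)$ (such functions exist, e.g.\ by solving $(-\Delta)^s u_0=\phi$ for $\phi\in C_c^\infty(\Omega)\setminus\{0\}$ via Lemma \ref{2402061816}), a direct computation gives
\[
\mathcal{R}(tu_0)=t^{1+\frac{1}{q}}\bigl|(-\Delta)^s u_0\bigr|_{1+\frac{1}{q}}^{1+\frac{1}{q}}-t^{p+1}|u_0|_{p+1}^{p+1},\qquad t>0.
\]
Setting $\mathcal{R}(tu_0)=0$ reduces to $t^{p-1/q}=|(-\Delta)^s u_0|_{1+1/q}^{1+1/q}/|u_0|_{p+1}^{p+1}$. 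Since $pq\neq 1$, the exponent $p-1/q$ is nonzero, so this equation admits a unique positive root $t_\ast$, and $t_\ast u_0\in\mathcal{N}_{\mathcal{I}}$.

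The core tool for the remaining claims is the continuous embedding
\[
\mathcal{W}^{2s,\,1+\frac{1}{q}}(\Omega)\hookrightarrow L^{p+1}(\Omega),
\]
furnished by Proposition \ref{2302102144}. A short algebraic check shows that the required inequality $p+1\le \frac{Nr}{N-2sr}$ with $r=1+\frac{1}{q}$ rearranges to $\frac{N-2s}{N}\le \frac{1}{p+1}+\frac{1}{q+1}$, i.e.\ to the subcritical assumption \eqref{2305191926} (the regime $r\ge N/(2s)$, corresponding to small $q$, is automatic). Let $C>0$ be the embedding constant, so that $|u|_{p+1}\le C|(-\Delta)^s u|_{1+1/q}$. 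For $u\in \mathcal{N}_{\mathcal{I}}$ the constraint $|(-\Delta)^s u|_{1+1/q}^{1+1/q}=|u|_{p+1}^{p+1}$ combined with this estimate yields
\[
\bigl|(-\Delta)^s u\bigr|_{1+\frac{1}{q}}^{\,p-\frac{1}{q}}\ge C^{-(p+1)}.
\]
If $pq>1$, the exponent $p-1/q>0$ gives a positive lower bound on $|(-\Delta)^s u|_{1+1/q}$, so $\mathcal{N}_{\mathcal{I}}$ stays away from the origin; if $pq<1$, the exponent is negative and the inequality inverts to provide an upper bound, yielding the desired boundedness of $\mathcal{N}_{\mathcal{I}}$.

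Finally, using $\mathcal{R}(u)=0$ to eliminate $|u|_{p+1}^{p+1}$ in the definition of $\mathcal{I}$, one obtains
\[
\mathcal{I}(u)=\Bigl(\tfrac{q}{q+1}-\tfrac{1}{p+1}\Bigr)\bigl|(-\Delta)^s u\bigr|_{1+\frac{1}{q}}^{1+\frac{1}{q}}=\frac{pq-1}{(p+1)(q+1)}\bigl|(-\Delta)^s u\bigr|_{1+\frac{1}{q}}^{1+\frac{1}{q}}.
\]
When $pq>1$ this prefactor is positive, and combined with the lower bound from the previous display it gives $\inf_{\mathcal{N}_{\mathcal{I}}}\mathcal{I}>0$. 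The main subtlety is the dichotomy $pq\gtrless 1$: the sign of $p-1/q$ controls simultaneously the solvability of the fibering equation and the direction of the Sobolev-based inequality, so one must read off the bound carefully in each regime. No other real obstacle is anticipated.
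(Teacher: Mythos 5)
Your proof is correct and follows essentially the same strategy as the paper's: a one-parameter fibering argument for non-emptiness, the embedding $\mathcal{W}^{2s,1+\frac1q}(\Omega)\hookrightarrow L^{p+1}(\Omega)$ together with the Nehari constraint for the quantitative bounds, and the algebraic identity $\mathcal{I}(u)=\big(\frac{q}{q+1}-\frac1{p+1}\big)|(-\Delta)^s u|_{1+\frac1q}^{1+\frac1q}$ on $\mathcal{N}_{\mathcal{I}}$. The only cosmetic difference is that the paper phrases the non-emptiness step as locating the max/min of $t\mapsto\mathcal{I}(tu)$ rather than solving $\mathcal{R}(tu)=0$ directly.
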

\begin{proof}
For any $u\in \mathcal{W}^{2s, 1+\frac1q}(\Omega)\backslash\{0\}$, when $pq>1$ (resp.~$pq<1$), let $t_u$ be the maximum (resp.~minimum) point of $\mathcal{I}(tu)$ for $t>0$. Then $t_uu\in \mathcal{N}_{\mathcal{I}},$
so $\mathcal{N}_{\mathcal{I}}$ is non empty. For any $u\in \mathcal{N}_{\mathcal{I}}$, using \eqref{2306031629} and $\langle \mathcal{I}'(u), u\rangle=0$, we have
\begin{equation}\label{2306122218}
\begin{aligned}
\left|(-\Delta)^su\right|_{1+\frac1q}^{1+\frac1q}=|u|_{p+1}^{p+1}\le C\left|(-\Delta)^su\right|_{1+\frac1q}^{p+1}.
\end{aligned}
\end{equation}
Therefore if $pq>1$, there exists some $C_0>0$ such that
\begin{equation}\label{2305191933}
\left|(-\Delta)^su\right|_{1+\frac1q}>C_0,\quad \forall\; u\in \mathcal{N}_{\mathcal{I}}.
\end{equation}
Next, if $u\in \mathcal{N}_{\mathcal{I}}$, by \eqref{2306031629},
\[
\begin{aligned}
\mathcal{I}(u)=\left(\frac{q}{q+1}-\frac1{p+1}\right)\left|(-\Delta)^su\right|_{1+\frac1q}^{1+\frac1q}.
\end{aligned}
\]
From \eqref{2305191933} and $pq>1$, it follows that $\mathcal{I}$ has a positive lower bound on $\mathcal{N}_{\mathcal{I}}$. When $pq<1$, we know from \eqref{2306122218} that $\mathcal{N}_{\mathcal{I}}$ is bounded in $\mathcal{W}^{2s, 1+\frac1q}(\Omega)$.
\end{proof}
In the sequel, we consider the functional $\mathcal{I}$ constrained on $\mathcal{N}_\mathcal{I}$.
A constrained critical point $u$ of $\mathcal{I}|_{\mathcal{N}_\mathcal{I}}$, means that there exists a Lagrange multiplier $\lambda\in \mathbb{R}$ such that
\begin{equation}\label{2306032016}
\mathcal{I}'(u)=\lambda \mathcal{R}'(u) \quad \text{in}~\mathcal{W}^{2s, 1+\frac1q}(\Omega)^*,
\end{equation}
where $\mathcal{W}^{2s, 1+\frac1q}(\Omega)^*$ is the dual space of $\mathcal{W}^{2s, 1+\frac1q}(\Omega)$. In particular, $u$ is a critical point of $\mathcal{I}$ whenever $\lambda=0$.
\begin{lemma}\label{2306122343}
Assume that $pq\neq 1$. Then any constrained critical point of $\mathcal{I}|_{\mathcal{N}_\mathcal{I}}$ is a critical point of $\mathcal{I}$.
\end{lemma}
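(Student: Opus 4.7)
The plan is to test the Lagrange multiplier equation against $u$ itself and use the sign information for $\langle \mathcal{R}'(u), u\rangle$ already established in \eqref{2306032031}. This reduces the statement to a one-line identity.

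More concretely, suppose $u$ is a constrained critical point of $\mathcal{I}|_{\mathcal{N}_{\mathcal{I}}}$. Then by \eqref{2306032016} there is $\lambda \in \mathbb{R}$ with $\mathcal{I}'(u) = \lambda\,\mathcal{R}'(u)$ in $\mathcal{W}^{2s, 1+\frac{1}{q}}(\Omega)^*$. Pairing both sides with $u$ and using the fact that $u \in \mathcal{N}_{\mathcal{I}}$, so that by definition $\langle \mathcal{I}'(u), u\rangle = 0$, I would obtain
\[
0 = \langle \mathcal{I}'(u), u\rangle = \lambda\,\langle \mathcal{R}'(u), u\rangle.
\]
Since $u \neq 0$ (members of $\mathcal{N}_{\mathcal{I}}$ are nonzero), \eqref{2306032031} gives $\langle \mathcal{R}'(u), u\rangle \neq 0$ under the hypothesis $pq \neq 1$ (strictly negative when $pq>1$ and strictly positive when $pq<1$). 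Therefore $\lambda = 0$, and \eqref{2306032016} becomes $\mathcal{I}'(u) = 0$ in $\mathcal{W}^{2s, 1+\frac{1}{q}}(\Omega)^*$, which is exactly the statement that $u$ is a free critical point of $\mathcal{I}$.

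There is really no obstacle here beyond recognizing that $\mathcal{N}_{\mathcal{I}}$ is a natural constraint in the sense of Szulkin--Weth: the nondegeneracy condition $\langle \mathcal{R}'(u), u\rangle \neq 0$ for $u \in \mathcal{N}_{\mathcal{I}}$ is precisely what \eqref{2306032031} supplies, and this is exactly the place where the assumption $pq \neq 1$ is used. No further regularity or compactness input is needed, and the argument is independent of whether the critical point was obtained as a minimizer over $\mathcal{N}_{\mathcal{I}}$ (e.g.\ the ground state in \eqref{2306152335}) or by some other minimax scheme on the manifold.
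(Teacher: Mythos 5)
Your proof is correct and is essentially identical to the paper's: both test the Lagrange multiplier identity $\mathcal{I}'(u)=\lambda\mathcal{R}'(u)$ against $u$, use $u\in\mathcal{N}_{\mathcal{I}}$ to get $\langle\mathcal{I}'(u),u\rangle=0$, and then invoke the sign of $\langle\mathcal{R}'(u),u\rangle$ (the paper writes this out as $\left(\frac1q-p\right)\left|(-\Delta)^su\right|_{1+\frac1q}^{1+\frac1q}\neq 0$, which is what \eqref{2306032031} encodes) to conclude $\lambda=0$.
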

\begin{proof}
Suppose that $u$ is a constrained critical point, there is $\lambda\in\mathbb{R}$ such that \eqref{2306032016} holds. Thus,
\begin{equation}\label{2305192023}
0=\langle \mathcal{I}'(u), u \rangle=\lambda  \langle \mathcal{R}'(u), u\rangle=\lambda \left(\frac{1}{q}-p\right)\left|(-\Delta)^su\right|_{1+\frac1q}^{1+\frac1q},
\end{equation}
which deduces $\lambda=0$. Consequently, $u$ is a critical point of $\mathcal{I}$
\end{proof}
\begin{lemma}\label{2306031956}
For $pq\neq 1$, $\mathcal{I}|_{\mathcal{N}_\mathcal{I}}$ satisfies the $(PS)$ condition.
\end{lemma}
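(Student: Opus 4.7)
The plan is the classical constrained Palais--Smale argument: given a $(PS)_c$ sequence $\{u_n\}\subset \mathcal{N}_\mathcal{I}$, one has Lagrange multipliers $\lambda_n\in\mathbb{R}$ with $\mathcal{I}'(u_n)-\lambda_n\mathcal{R}'(u_n)\to 0$ in $\mathcal{W}^{2s,1+\frac{1}{q}}(\Omega)^*$; I would first show $\lambda_n\to 0$ so that $\{u_n\}$ is actually a PS sequence for $\mathcal{I}$ in the ambient space, and then exploit the compact embedding from Proposition \ref{2302102144} together with a monotonicity-type argument to pass from weak to strong convergence.

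Boundedness of $\{u_n\}$ in $\mathcal{W}^{2s,1+\frac{1}{q}}(\Omega)$ is either immediate from Lemma \ref{2306031952} when $pq<1$, or, when $pq>1$, follows from the identity $\mathcal{I}(u_n)=\bigl(\tfrac{q}{q+1}-\tfrac{1}{p+1}\bigr)\bigl|(-\Delta)^s u_n\bigr|_{1+\frac{1}{q}}^{1+\frac{1}{q}}$ valid on $\mathcal{N}_\mathcal{I}$ (with positive coefficient) combined with $\mathcal{I}(u_n)\to c$. Pairing the PS relation with $u_n$ and using $\langle \mathcal{R}'(u_n),u_n\rangle = \bigl(\tfrac{1}{q}-p\bigr)\bigl|(-\Delta)^s u_n\bigr|_{1+\frac{1}{q}}^{1+\frac{1}{q}}$ yields
\[
\lambda_n\Bigl(\tfrac{1}{q}-p\Bigr)\bigl|(-\Delta)^s u_n\bigr|_{1+\frac{1}{q}}^{1+\frac{1}{q}}=o(1).
\]
When $pq>1$, \eqref{2305191933} gives $\bigl|(-\Delta)^s u_n\bigr|_{1+\frac{1}{q}}\geq C_0>0$ and forces $\lambda_n\to 0$. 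When $pq<1$, either $\bigl|(-\Delta)^s u_n\bigr|_{1+\frac{1}{q}}$ is bounded below and the same conclusion holds, or, along a subsequence, it tends to zero; in the latter case the Nehari identity $|u_n|_{p+1}^{p+1}=\bigl|(-\Delta)^s u_n\bigr|_{1+\frac{1}{q}}^{1+\frac{1}{q}}$ already forces $u_n\to 0$ in $\mathcal{W}^{2s,1+\frac{1}{q}}(\Omega)$ and we are done.

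Assuming $\mathcal{I}'(u_n)\to 0$ in $\mathcal{W}^{2s,1+\frac{1}{q}}(\Omega)^*$, extract $u_n\rightharpoonup u$. The subcritical range \eqref{2305191926} is exactly what Proposition \ref{2302102144} needs to guarantee the compact embedding $\mathcal{W}^{2s,1+\frac{1}{q}}(\Omega)\hookrightarrow L^{p+1}(\Omega)$, so $u_n\to u$ in $L^{p+1}(\Omega)$ and, by Lebesgue after extracting an a.e. subsequence, $|u_n|^{p-1}u_n\to |u|^{p-1}u$ in $L^{1+\frac{1}{p}}(\Omega)$. Writing $f_n=(-\Delta)^s u_n$, $f=(-\Delta)^s u$, $T(t)=|t|^{\frac{1}{q}-1}t$, and testing $\mathcal{I}'(u_n)\to 0$ against $u_n-u\in \mathcal{W}^{2s,1+\frac{1}{q}}(\Omega)$ gives
\[
\int_\Omega T(f_n)(f_n-f)\,dx - \int_\Omega |u_n|^{p-1}u_n(u_n-u)\,dx \to 0.
\]
The second integral vanishes by the strong $L^{p+1}$ convergence of $u_n$ and the $L^{1+\frac{1}{p}}$ convergence of $|u_n|^{p-1}u_n$. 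Since $T(f)\in L^{q+1}(\Omega)$ is fixed and $f_n\rightharpoonup f$ in $L^{1+\frac{1}{q}}(\Omega)$, one also has $\int_\Omega T(f)(f_n-f)\,dx\to 0$, and subtracting gives
\[
\int_\Omega \bigl(T(f_n)-T(f)\bigr)(f_n-f)\,dx \to 0.
\]

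The main obstacle is now to turn this vanishing monotone integral into strong convergence of $f_n$ to $f$ in $L^{1+\frac{1}{q}}(\Omega)$. When $q\leq 1$ (so $1+\frac{1}{q}\geq 2$) the pointwise Clarkson-type inequality $\bigl(T(a)-T(b)\bigr)(a-b)\geq c|a-b|^{1+\frac{1}{q}}$ does the job directly. When $q>1$ (so $1<1+\frac{1}{q}<2$), one has only $\bigl(T(a)-T(b)\bigr)(a-b)\geq c|a-b|^{2}/(|a|+|b|)^{1-\frac{1}{q}}$, and a Hölder interpolation against the bounded $L^{1+\frac{1}{q}}$-norms of $\{f_n\}$ and $f$ is required; handling these two regimes $q\lessgtr 1$ separately is the delicate technical point. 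In either case one obtains $|f_n-f|_{1+\frac{1}{q}}\to 0$, which is precisely $u_n\to u$ in $\mathcal{W}^{2s,1+\frac{1}{q}}(\Omega)$, concluding the proof.
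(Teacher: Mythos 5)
Your proposal is correct, and the first two stages (boundedness via $\mathcal{I}(u_n)=\bigl(\tfrac{q}{q+1}-\tfrac{1}{p+1}\bigr)\bigl|(-\Delta)^s u_n\bigr|_{1+1/q}^{1+1/q}$, then killing the multiplier by pairing with $u_n$ and splitting according to whether $\bigl|(-\Delta)^s u_n\bigr|_{1+1/q}$ stays away from zero) coincide with the paper's, which phrases the same dichotomy as $\langle\mathcal{R}'(u_n),u_n\rangle\to m$ with $m\neq 0$ or $m=0$. Where you genuinely diverge is the passage from weak to strong convergence. The paper stays on the Nehari manifold: from $u_n\to u$ in $L^{p+1}(\Omega)$ and the identities $|u_n|_{p+1}^{p+1}=|(-\Delta)^s u_n|_{1+1/q}^{1+1/q}$ it deduces convergence of the $L^{1+1/q}$-norms of $(-\Delta)^s u_n$, and then invokes weak convergence plus norm convergence in the uniformly convex space $L^{1+1/q}(\Omega)$ (Radon--Riesz) to conclude. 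This is shorter, but it needs the weak limit to satisfy $u\in\mathcal{N}_{\mathcal{I}}$ (so that the Nehari identity also holds for $u$), a point the paper asserts rather tersely. Your route instead tests $\mathcal{I}'(u_n)\to 0$ against $u_n-u$ and exploits the strict monotonicity of $T(t)=|t|^{1/q-1}t$ via the standard pointwise inequalities in the two regimes $1+\tfrac1q\geq 2$ and $1+\tfrac1q<2$; this never requires identifying the limit as an element of $\mathcal{N}_{\mathcal{I}}$ and so closes that small gap, at the cost of the case analysis and the Hölder interpolation in the singular regime $q>1$. Both arguments are valid; yours is more robust (it is exactly the $S_+$-type argument one would use for general quasilinear problems), while the paper's is more economical given the Nehari constraint.
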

\begin{proof}
Let $\{u_n\}$ be a $(PS)$ sequence at level $c\in \mathbb{R}$ for $\mathcal{I}|_{\mathcal{N}_\mathcal{I}}$, that is $\{u_n\} \subset \mathcal{N}_\mathcal{I}$, $\lambda_n \in \R$ such that
\begin{equation}\label{2305200030}
\mathcal{I}(u_n)=c+o(1) \quad \mbox{and} \quad \mathcal{I}'(u_n)-\lambda_n\mathcal{R}'(u_n)=o(1)\;\; \text{in}~\mathcal{W}^{2s, 1+\frac1q}(\Omega)^*.
\end{equation}
Therefore
\[
\begin{aligned}
c+o(1) =\mathcal{I}(u_n)-\frac{1}{p+1}\mathcal{R}(u_n) &=\left(\frac{q}{q+1}-\frac{1}{p+1}\right)\left|(-\Delta)^su_n\right|_{1+\frac1q}^{1+\frac1q}.
\end{aligned}
\]
By $pq\neq 1$, one can conclude the boundedness of $\{u_n\}$ in $\mathcal{W}^{2s, 1+\frac1q}(\Omega)$, which yields also that $\{\langle \mathcal{R}'(u_n), u_n\rangle\}$ is bounded. Up to a subsequence, assume that
\begin{equation}\label{2306032345}
\langle \mathcal{R}'(u_n), u_n\rangle\to m \in \R.
\end{equation}
Let $m\neq 0$, \eqref{2305200030} yields $o(1)=\langle\mathcal{I}'(u_n)-\lambda_n\mathcal{R}'(u_n), u_n\rangle=\lambda_nm+o(1),$
which yields $\lambda_n\to 0$. Up to a new subsequence, there exists $u\in \mathcal{W}^{2s, 1+\frac1q}(\Omega)$ such that $(-\Delta)^su_n\rightharpoonup (-\Delta)^su$ in $L^{\frac{q+1}{q}}(\Omega)$,
and $u_n\to u$ in $L^{p+1}(\Omega)$, so $u\in \mathcal{N}_{\mathcal{I}}$. By direct computations,
\[
o(1)=|u_n|_{p+1}^{p+1}-|u|_{p+1}^{p+1}=\left|(-\Delta)^su_n\right|_{1+\frac1q}^{1+\frac1q}-\left|(-\Delta)^su\right|_{1+\frac1q}^{1+\frac1q},
\]
which together with the weak convergence of $(-\Delta)^s u_n$ in $L^{1 +\frac{1}{q}}(\Omega)$ yields that $u_n\to u$ in $\mathcal{W}^{2s, 1+\frac1q}(\Omega)$.\par

Let now $m=0$. It is clear from \eqref{2306032345} and $u_n\in \mathcal{N}_{\mathcal{I}}$ that
\[
\begin{aligned}
o(1)=\langle \mathcal{R}'(u_n), u_n\rangle=\Big(\frac{1}{q}-p\Big)\left|(-\Delta)^su_n\right|_{1+\frac1q}^{1+\frac1q},
\end{aligned}
\]
which deduces that $\left|(-\Delta)^su_n\right|_{1+\frac1q}\to 0.$ So we are done.
\end{proof}

\begin{proof}[\bf Proof of Theorem \ref{2306122224} completed]
(i) We apply a deformation lemma on $C^1$ manifold (see \cite{Bonnet1993}) to $\mathcal{I}|_{\mathcal{N}_\mathcal{I}}$. By Lemma \ref{2306031952}, it is easy to see that $c_{\mathcal{I}}<0$ if $pq<1$ and $c_{\mathcal{I}}>0$ if $pq<1$. By Lemma \ref{2306031956}, $c_{\mathcal{I}}$ is attained by some $u\in \mathcal{N}_\mathcal{I}$. Hence, $u$ is a constrained critical point of $\mathcal{I}|_{\mathcal{N}_\mathcal{I}}$. Using Lemma \ref{2306122343}, $u$ is also a critical point of $\mathcal{I}$, hence an energy solution of \eqref{2305282030}. Let $w$ be the weak solution of
\[
(-\Delta)^{s} w =|(-\Delta)^{s}u| \; \text{ in }\;\Omega,\quad u=0~\;\text{ in} \;\;\mathbb{R}^N\setminus\Omega.
\]
By Lemma \ref{2306081933}, $w\ge \pm u$, so $w\ge |u|$. Consequently, $\mathcal{I}(tw)\le \mathcal{I}(tu)$ for all $t>0$. Thus there exists a unique $t_w>0$ such that $t_ww\in \mathcal{N}_\mathcal{I}$ and $\mathcal{I}(t_ww)\le  \mathcal{I}(u)=c_{\mathcal{I}}.$ Then $t_ww$ is a minimizer for $c_{\mathcal{I}}$. By means of Lemma \ref{2306012122}, one has $t_ww>0$. The proof can be concluded by Proposition \ref{2306130008}.

(ii) Assume that $u_1, u_2\in \mathcal{W}^{2s, 1+\frac1q}(\Omega)$ are two distinct positive energy solutions of \eqref{2305282030}. We know that $u_1, u_2\in C^s(\overline{\Omega})$. Denote
\[
\beta_0:=\sup\{l\in\mathbb{R}: u_2\ge lu_1 ~\text{a.e. in} ~\Omega \} \quad\text{and}\quad
\beta_1:=\sup\{l\in\mathbb{R}: u_1\ge lu_2 ~\text{a.e. in} ~\Omega \}.
\]
From Lemma \ref{2306012122} and Lemma \ref{2402061816}, there exist $C_1, C_2>0$ such that $C_1\delta^s\le u_1, u_2\le C_2\delta^s$. Therefore $0<\beta_0$, $\beta_1<\infty$. In order to prove $u_1=u_2$, it suffices to prove that $\beta_0 \ge1$ and $\beta_1\ge 1$. Notice that
\[
\begin{aligned}
(-\Delta)^{s}\left(|(-\Delta)^{s}u_1|^{\frac1q-1}(-\Delta)^{s}u_1\right)&=u_1^{p}\ge  \beta_1^{p} u_2^{p}\\
&=\beta_1^{p}(-\Delta)^{s}\left(|(-\Delta)^{s}u_2|^{\frac1q-1}(-\Delta)^{s}u_2\right)\\
&=(-\Delta)^{s}\left(|(-\Delta)^{s}(\beta_1^{pq}u_2)|^{\frac1q-1}(-\Delta)^{s}(\beta_1^{pq}u_2)\right).\\
\end{aligned}
\]
By comparison principle, we get $u_1\ge \beta_1^{pq}u_2$. So $\beta_1\ge\beta_1^{pq}$, hence $\beta_1\ge 1$. A similar argument gives $\beta_0\ge 1$. By Proposition \ref{2306130008}, the classical solution of \eqref{2305191141} is unique.

(iii) Replacing $\mathcal{W}^{2s, 1+\frac1q}(B_R)$ by $\mathcal{W}_{{\rm rad}}^{2s, 1+\frac1q}(B_R)$, we can check that all the above steps work, so we have a positive classical radially symmetric solution.
\end{proof}

\end{document}